\newcommand{\C}{\mathcal{C}}
\newcommand{\R}{\mathds{R}}  
\newcommand{\N}{\mathds{N}}
\newcommand{\de}{\mathrm d}
\newcommand{\cstk}{{\rm CSTK} }
\begin{document}
\mainmatter              
\title{Causal ladder of Finsler spacetimes with a \\ cone Killing vector field}
\titlerunning{Causal ladder of Finsler spacetimes with a cone Killing vector field}  
%
\author{Erasmo Caponio\inst{1} \and Miguel  Angel Javaloyes\inst{2}} 
\authorrunning{M. A. Javaloyes et al.} 
%
\tocauthor{Erasmo Caponio, Miguel \' Angel Javaloyes and Miguel S\' anchez}
\institute{ Politecnico di Bari, Dipartimento di Meccanica, Matematica  e Management,  Italy\\
	\email{erasmo.caponio@poliba.it}
\and	
	University of Murcia, Department of Mathematics, Spain\\
\email{majava@um.es}
}

\maketitle              
\begin{center}
	\it \footnotesize To the memory of Franco Mercuri
\end{center}

\vspace{0.2cm}
	\begin{abstract} 
 The correspondence between wind Riemannian structures  and spacetimes endowed with a Killing vector field as developed in full generality in \cite{CJS14} is deepened  by considering  a cone structure endowed with a vector field that preserve the structure  
(termed {\em cone Killing vector field}) and a wind Finslerian structure, introduced in \cite{CJS14} as well.
Causality properties of the former are characterized by  using metric-type properties of the latter.
A particular attention is posed to the case of a cone  structure associated with a  Finsler-Kropina type metric, i.e. a field of compact and  strongly convex indicatrices that 
enclose the zero vector in the closure of its bounded interior
at each tangent space of the manifold.	
\keywords{Finsler spaces and spacetimes, Killing vector field, cone structures}
\end{abstract}
\section{Introduction}

The theory of causality has been present in General Relativity since its first days. Indeed, non-chronological spacetimes have been seen as a pathology since the beginning. But it was not until the 60s that causality reached its peak  hand in hand with the singularity theorems of Penrose and Hawkings.  Several tools have been developed to study the causal properties of a spacetime as for example Penrose diagrams. But, in some particular cases, namely, when the metric admits a Killing vector with a spacelike section, causality can be encoded in an associated Finsler metric on the section. The first results on this line established a characterization of the causal ladder of a standard stationary spacetime $(\R\times S,g)$ in terms of completeness properties and geodesic convexity of a certain Randers metric on   $S$ (see \cite{CJM11,CJS10}). Let us recall that a Randers metric is characterized by having a non-necessarily centered ellipsoid as indicatrix which contain the origin in the interior of the bounded domain enclosed at every tangent space. 
Later on this has been generalized to stationary Finsler spacetimes (see \cite{CapSta16,CapSta18}) allowing for arbitrary Finsler metrics on the section, and in this way, removing the restriction of considering Randers metrics. 

Let us return to the case of classical Lorentzian spacetimes.
When the Killing vector field is not timelike, the Randers metric must be replaced with a wind Riemannian structure, which is determined by a field of ellipsoids, but this time with origin that can be placed everywhere.
The name of wind Riemannian structures was coined regarding the Zermelo problem of navigation.  As a matter of fact, the field of indicatrices can represent the prescribed velocities of a moving object, a boat or a zeppelin, which in principle has the same speed in all directions, but in the presence of a wind, these velocities are given by the translation of a sphere. If the wind is not strong these translated spheres determine a Randers metric. But when it is strong enough, namely, the push of the wind is stronger than the engine of the boat, then we obtain a wind Riemannian structure.
In particular, when the Killing vector field is spacelike, the wind Riemannian structure has the origin outside the bounded region determined at every tangent space by the field of ellipsoids and, as a consequence, there are two pseudo-Finsler metrics associated with the wind Riemannian structure. Anyway, it is still  possible to characterize the causality of the spacetime using both metrics as it was done in \cite{CJS14}. 

The main result of this work is to generalize the results concerning the causal ladder in \cite{CJS14} to Finsler spacetimes. But as causality depends only on the future lightcone of the Finsler spacetime, we will work in the more general setting of cone structures (see Definition \ref{d_conestructure}). Observe that every cone structure is the future lightcone of a Finsler spacetime, but this Finsler spacetime is not unique. Rather than a Killing vector field of the Finsler spacetime, we will consider a vector field with flow that preserves the cone structure. This will be called a {\em cone Killing field} of the cone structure, and when it admits a spacelike section, it will be possible to associate a wind Finslerian structure  (a field of compact, strongly convex hypersurfaces) that encodes the causality of the cone structure. Again, when the origin is outside the bounded region determined by the field of strongly convex hypersurfaces, we have two associated pseudo-Finsler metrics, which in turn allow us to characterize all the causal properties in terms of its completeness and convex properties (see Theorem \ref{generalK}). In a previous section, we have considered the distinguished case in that the cone Killing field $K$ is causal and there is only one associated (conic) Finsler metric on the section. This metric  is of Kropina type where $K$ is lightlike and a classical Finsler metric where $K$ is timelike.  The causal ladder  can then be characterized in terms of this Finsler-Kropina type metric (see Theorem \ref{kropinaLadder}).

We note that the definition of a cone structure in Definition~\ref{d_conestructure} is equivalent to the one in \cite{JS18} (see Remark~\ref{equivalence}). Specifically, we define a cone structure as an embedded hypersurface $\C$ in $TM$ representing the set of  ``future lightlike vectors'' at each point $p\in M$ that satisfies a transversality condition and some convex properties. A field of cones with convex, non-empty interiors is naturally associated with $\C$, defined as $p\mapsto A_p\cup \C_p$.
In other works, starting from the seminal paper \cite{FS12} and continuing in \cite{Min19}, a cone structure is defined as a set-valued map $p\in M\mapsto \C_p\subset T_p M$, where $\C_p\cup{0}$ is closed in $T_p M$ (not necessarily with non-empty interior in \cite{Min19}) that satisfies certain regularity conditions, which vary slightly across the references. A more general definition is provided in \cite{BS18}, where a cone structure $\C$ is introduced as a topological subset of $TM$ such that $\C\cup \bf 0$ is a closed subset, and $\C\cap T_pM$, for each $p\in M$, is a (possibly empty or with empty interior) convex cone that can also be equal to the whole space $T_p M$.
While the latter two works present a more general notion of a cone structure,
Definition~\ref{d_conestructure} is tailored to the concept of  wind Finslerian structure.

The paper is structured as follows. In \S \ref{finsterm} we collect some basic definitions concerning Finsler geometry. In \S \ref{conestruct} we introduce the concept of cone structure and its causality \S \ref{causalityforcones}. Then we associate a cone triple to a cone structure \S \ref{conetriple}, which will be used to define a Finsler spacetime having the cone structure as its future lightcone \S \ref{Finslight}. In the next two subsections, we explore the extension of the notion of limit curve to cone structures and we make a summary of the results about causality that we will use and how they can be extended to cone structures in a simple way. In 
\S \ref{wfm}, we introduce the notion of wind Finslerian structure  and the main concepts associated with it as wind balls, c-balls... In \S \ref{windtriples} we adapt the notion of cone triples to the case in that we consider an arbitrary vector field in the triple (not necessarily timelike).  In \S \ref{cstk} we introduce the notion of cone Killing fields and cone structures with a space-transverse Killing field splitting (\cstk splitting  in short). Finally, in \S \ref{FinslerKropina} and \S \ref{arbitraryKilling}, we study the causal ladder of a \cstk splitting, first when the cone Killing field is causal and then when it is arbitrary.

\section{Finslerian terminology}\label{finsterm}
Let us introduce some terminology concerning Finsler geometry in a more general setting than the standard one considered for example in \cite{BCS01,ChSh05}. A {\em (two-homogeneous) pseudo-Minkowski norm} in a vector space $V$ is defined as a function $L:A\subset V\rightarrow \R$ such that

\begin{enumerate}[(i)]
	\item $A$ is a conic open subset, namely, if $v\in A$, then $\lambda v\in A$ for all $\lambda>0$,
	\item it is smooth away from the zero vector,
	\item it is positive homogeneous of degree two, namely, $L(\lambda v)=\lambda^2 L(v)$ for all $v\in A$ and $\lambda>0$,
	\item the Hessian
	\begin{equation}\label{fundten}
	g_v(u,w)=\frac{1}{2} \frac{\partial^2}{\partial t\partial s} L(v+tu+sw)|_{(t,s)=(0,0)},
	\end{equation} 
	for all $v\in A\setminus \{0\}$ and $u,w\in V$, is non-degenerate.
	
\end{enumerate}
Moreover, given a pseudo-Minkowski norm with $L>0$,  we can consider $F=\sqrt{L}$ (homogeneous of degree one) and we say that $F$ is
\begin{enumerate}[(i)]
	\item {\em a conic Minkowski norm } if $g_v$ in \eqref{fundten} is positive definite for all $v\in A$,
	\item {\em a Lorentz-Minkowski norm} if $g_v$ in \eqref{fundten} has index $n-1$ for all $v\in A$, where $n=\dim V$,
	\item {\em a Minkowski norm} if $A=V$ and \eqref{fundten} is positive definite for all $v\in A$.	
	\end{enumerate}
In all the cases, the subset 
\begin{equation}\label{indicatrix}
\Sigma=\{v\in A: F(v)=1\}
\end{equation}
is called the 
{\em indicatrix} of $F$. 

\begin{remark}\label{sff}
	Observe that the fundamental tensor in \eqref{fundten} restricted to the indicatrix $\Sigma$ coincides with the second fundamental form of $\Sigma$ at $v$ with respect to the vector $\xi=-v$ and the Levi-Civita connection associated with any 
	positive inner product\footnote{There are many choices of positive definite inner products in $V$, so many as ellipsoids centered at the origin, but  all of them determine the same Levi-Civita connection.}  on $V$ (see \cite[Eq. (2.5)]{JS14} and observe that $\xi(G)=-1$, where $G=\frac 1 2 F^2$).

	Taking into account that $v$ is $g_v$-orthogonal to $\Sigma$ and $g_v(v,v)=F(v)^2$, it follows that $g_v$ is positive definite if and only if the second fundamental form of $\Sigma$  with respect to $\xi$ is positive definite. As a consequence, any hypersurface $\Sigma$ with this property about the second fundamental form and with the property that any half-ray from the origin intersects $\Sigma$ at most once determines a conic Minkowski norm (see also \cite[Th. 2.14]{JS14}).
	\end{remark}

A particular case of conic Minkowski norm is {\em a Kropina type norm}. It is characterized by an indicatrix  $\Sigma$, whose closure $\bar \Sigma$ is an embedded, compact, strongly convex\footnote{Strongly convex means that its second fundamental form is definite. Moreover, to be definite does not depend on the transversal vector used to compute the second fundamental form.} hypersurface,  diffeomorphic to a sphere and equal to $\bar \Sigma=\Sigma\cup\{0\}$. A typical example of Kropina type norm on $V$ is given by 
\begin{equation}\label{kropina}
	 F(v):=\frac{F^2_0(v)}{\beta(v)},
\end{equation}
where $F_0$ is a Minkowski norm and $\beta$ a co-vector, both on $V$ (see \cite[\S4.2.2]{JS14}). In such a case $A$ is the open half-space defined by $\beta>0$.

The next step is to consider a manifold $M$. Let $A\subset TM$ be an open subset such that for each $p\in M$, $A_p:=A\cap T_pM$ is a nonempty,  conic open subset of $T_pM$. Then a function $L:A\subset TM\rightarrow \R$ is said to be a {\em pseudo-Finsler metric} in $M$ if it is smooth away from the zero section 
and the restriction $L_p:=L|_{A_p}$ is a pseudo-Minkowski norm on $T_pM$ for every $p\in M$. In particular,  given a pseudo-Finsler metric, let us assume that  $L>0$ and $F:=\sqrt{L}$; then  we say that 
\begin{enumerate}[(i)]
	\item $F$ is a {\em conic Finsler metric} if $F_p:=F|_{A_p}$ is a conic Minkowski norm for all $p\in M$,
	\item 
	$F$ is a  {\em Lorentz-Finsler metric} if $F_p:=F|_{A_p}$ is a Lorentz-Minkowski norm for all $p\in M$,
	\item $F$ is a  {\em Finsler metric} if $A_p=T_pM$ and $F_p:=F|_{T_pM}$ is a  Minkowski norm for all $p\in M$.
\end{enumerate}
\section{Cone structures and Finsler spacetimes}\label{conestruct}
The framework of cone structures provides a foundation for defining causality with extensive generality in the context of a smooth manifold. If we remain in the smooth  setting and $\dim V>2$\footnote{The case $\dim V=2$ is trivial in the sense that the cone structure is given by two straight lines that intersect in the origin. }, we can define a {\em strong cone} $\C_0$ in a vector space $V$ as a smooth,  connected, embedded, hypersurface $\C_0$ of $V\setminus \{0\}$ which is 
\begin{enumerate}[(i)]
\item conic, in the sense that if $v\in\C_0$ then $\lambda v\in \C_0$ for all $\lambda>0$, 
\item and such that there exists an affine hyperplane $\Pi$ that does not contain the origin  and  intersects $\C_0$ in a compact,  strongly convex,  embedded smooth hypersurface of $\Pi$ diffeomorphic to a sphere. 
\end{enumerate}
\begin{remark}\label{equivalence}
	It is not difficult to check that this definition coincides with the one given in \cite[Def. 2.1]{JS18}, namely, it is conic, salient (if $v\in \C_0$, then $-v\notin \C_0$), with  convex interior (i.e. it encloses a  convex region) and it is strongly convex in the non-radial directions (see \cite[Lem.2.5 and Prop. 2.6]{JS18}). The last condition can be weakened to convex,  getting the  notion of a weak cone (we will not use it here). 
\end{remark}
\begin{definition}\label{d_conestructure} Let $M$ be a manifold of dimension $n\geq 2$. A {\em (strong) cone structure}
$\mathcal{C}$ is an embedded hypersurface of $TM$ such that, for  each $p\in M$: 

\begin{enumerate}[(a)]	
\item[(a)]  $\mathcal{C}$ is  transverse to  the fibers  of the tangent bundle,  that is, if $v\in \C_p$, then $T_v(T_pM)+T_v\C=T_v(TM)$, and \item[(b)]  each $\mathcal{C}_p:=T_pM\cap \mathcal{C}$ is a 
 strong cone  
  in $T_pM$. 
\end{enumerate}
 The inner (convex) domain enclosed by $\C_p$ will be denoted by $A_p$ and 
$A:=\cup_{p\in M}A_p$, which  will be called the {\em cone domain}. 
\end{definition}
The role of the transversality  condition in the above definition is related to the smoothness of some Finsler metrics associated with the cone structure, see \cite[Th. 2.17]{JS18} (compare  also with \cite[Prop. 2.12]{CJS14}).

\subsection{Causality of a cone structure}\label{causalityforcones}
After this definition,  the causality of a cone structure $(M,\C)$ is analogous to the causality associated with the (future-directed) cone structure of a relativistic (Lorentzian) spacetime. Namely, a vector $v\in TM$ is said to be  {\it future timelike} if $v\in A$, {\it future lightlike} if $v\in \C$, and {\it future causal} if $v\in \C\cup A$.  We say that $v\in TM$ is {\em past} timelike (resp. lightlike, causal) if $-v$ is future timelike (resp. lightlike, causal). Moreover, we say that $v\in TM$ is {\em spacelike} if it is neither future nor past causal.   We say that a piecewise smooth curve $\gamma:[a,b]\rightarrow M$ is future or past timelike (resp. lightlike, causal), if $\dot\gamma(t)$
is future or past timelike (resp. lightlike, causal) for all $t\in [a,b]$. Finally, given two points $p,q\in M$, we say that $q$ is {\em chronologically} (resp. {\em strictly causally}) related to $p$, denoted $p\ll q$ (resp. $p< q$) if there exists a future timelike (resp. causal) curve $\gamma:[a,b]\rightarrow M$ such that $p=\gamma(a)$ and $q=\gamma(b)$. We say that $q$ is {\em causally} related to $p$, denoted with $p\leq q$, if $p< q$ or $p=q$.  The {\em chronological future (resp. past)} of a point $p\in M$ is defined as
\[I^+(p)=\{q\in M: p\ll q\}\quad \text{  (resp. $I^-(p)=\{q\in M: q\ll p\}$)},\]
while the {\em causal future (resp. past)} of a point $p\in M$ is defined as
\[J^+(p)=\{q\in M: p\leq q\}\quad \text{  (resp. $J^-(p)=\{q\in M: q\leq p\}$)}.\]

We can reproduce the main steps of the causal ladder in this context. Let $\mathcal P(M)$ be  the set of parts of $M$,
and, for every compact $K$ of $M$, define the subset $A_K$ of ${\mathcal P}(M)$ as
\[A_K= \{U\in \mathcal P(M): U\cap K=\emptyset\}.\]
 Then it is not difficult to see that the collection of subsets of $\mathcal P(M)$  given by  $\{A_K\subset {\mathcal P(M)}: \text{$K\subset M$, with $K$ compact}\}$ is the base of a topology of $\mathcal P(M)$. Now consider the map
\[I^\pm: M\rightarrow \mathcal P(M).\]
We say that $I^\pm$ is {\em outer continuous} at some $p\in M$ if for any compact $K\subset M\setminus  I^\pm(p)$ there exists an open neighborhood $U$ of $p$ such that $K\subset M\setminus  I^\pm(q)$ for all $q\in U$ (see \cite{HS74}). It is not difficult to check that $I^\pm$ is outer continuous if and only if it is continuous when $\mathcal P(M)$ is endowed with the topology defined above.

\begin{definition}\label{causalprop}
	A cone structure $(M,\C)$ is said to be
	\begin{enumerate}[(i)]
	\item {\em chronological} if there is no point $p\in M$ satisfying that $p\ll p$ (there are no closed timelike curves);
	\item {\em causal } if there is no point $p\in M$ satisfying that $p< p$ (there are no closed causal curves);
	\item {\em future (resp. past) distinguishing} if given $p,q\in M$, $p\not=q$ implies that $I^+(p)\not=I^+(q)$ (resp. $I^-(p)\not=I^-(q)$);
	\item {\em distinguishing} if it is past and future distinguishing, i.\,e., the set valued maps $I^\pm :M\rightarrow {\mathcal P}(M)$ are injective;
	\item {\em strongly causal} if for every $p\in M$ and $U$ an open neighborhood of $p$, there exists a neighborhood $V$ of $p$ included in $U$ such that any causal  curve from $V$ that leaves $U$ does not return to $V$;
	\item {\em stably causal} if there exists a time function, namely, a continuous function $t:M\rightarrow\R$ such that it is strictly increasing along any future  causal curve;
	\item {\em causally continuous} if the maps $I^\pm :M\rightarrow {\mathcal P}(M)$ are injective and continuous with the topology defined above;
	\item {\em causally simple} if it is distinguishing and the subsets $J^\pm(p)$ are closed for all $p\in M$;
	\item {\em globally hyperbolic} if it is strongly causal and $J^+(p)\cap J^-(q)$ is compact for all $p,q\in M$.
	\end{enumerate}
	\end{definition}
\begin{remark} 
	Let us observe that some of the conditions of the causal steps can be replaced with weaker conditions, but we will not go into details. Up to the classical Lorentzian case, the ``distinguishing condition" in the definition of causal simplicity  can be weakened into ``causality" (see \cite[Prop. 3.64]{MinSan}), and the ``strong causality'' of global hyperbolicity can be weakened into ``causality'' (see \cite{BS07}) or even  removed in the non-compact case and in dimension greater than $2$ (see \cite{MH19}). On the other hand, in the reference \cite[Def. 2.23]{Min19} some of the causal properties are defined with slight variations. Even though in that reference the author shows that the usual causal ladder of  spacetimes extends to cone structures, i.e. each of the above properties implies the subsequent ones, see \cite[Th. 2.47]{Min19}, and indeed, it is valid for the more general notion of closed cone structures,  we  will indicate how to prove some basic properties that we need in \S\ref{somebasic} to make the paper more self-contained and to avoid clutter with the different definitions.
\end{remark}
Finally, causal properties allow us to introduce a notion of geodesic which generalizes the notion of lightlike geodesics (see \cite[Def. 2.9]{JS18}).
\begin{definition}\label{conegeos}
Given a cone structure $(M,\C)$, a causal curve $\gamma:[a,b]\rightarrow M$ is said to be a cone geodesic if it is locally horismotic, namely, for every 
$t_0\in[a,b]$ there exists $\varepsilon>0$, and a small neighborhood $U$ of $\gamma(t_0)$ such that for all $s,s'\in I_\varepsilon:=[a,b]\cap (t_0-\varepsilon,t_0+\varepsilon)$, $\gamma(s)\leq_U \gamma(s')$
and $\gamma|_{I_\varepsilon}\subset U$,
 but there is no timelike curve from $\gamma(s)$ to $\gamma(s')$ contained in $U$.
\end{definition}
	\subsection{Cone triples}\label{conetriple}
	Given a cone structure $(M,\C)$,  a non-vanishing one-form $\Omega$ and a vector field $T$ both on $M$ such that $\ker \Omega$ is made of spacelike vectors, $T$ is future timelike everywhere and $\Omega(T)=1$, we have a natural splitting $TM={\rm span}(T)\oplus \ker\Omega$ with a projection $\pi:TM\rightarrow \ker(\Omega)$ determined by
	\[v_p=\Omega(v_p)T_p+\pi(v_p),\quad\text{for all $v_p\in T_pM$ and $p\in M$}.\]
	Every cone triple determines a Finsler metric on the fiber bundle $\ker \Omega$, namely, a function $F:\ker\Omega\rightarrow [0,+\infty)$   such that $F|_{T_pM\cap \ker\Omega}$ is a Minkowski norm for all $p\in M$. 
	This Finsler metric is determined by the following property (see \cite[Th. 2.17]{JS18}):
	\begin{equation}\label{defF}
	v_p\in\C_p \text{ if and only if } v_p=F(\pi(v_p))T_p+\pi(v_p).
	\end{equation}
	Observe that the indicatrix of $F$, $\Sigma_p=\{v\in \ker\Omega\cap T_pM: F(v)=1\}$, when affinely translated into the hyperplane $\{\Omega=1\}\cap T_pM$, coincides with the intersection of that hyperplane with $\C_p$.
	\begin{definition}
		Given a cone structure $(M,\C)$, a non-vanishing one-form $\Omega$ with spacelike kernel and a future  timelike vector $T$ such that $\Omega(T)=1$, we say that $(\Omega,T,F)$ is a cone triple for $(M,\C)$, being $F$ the Finsler metric on $\ker\Omega$ determined by \eqref{defF}.
		\end{definition}
	Observe that an arbitrary cone triple $(\Omega,T,F)$ with $\Omega(T)=1$ uniquely determines a cone structure using \eqref{defF} (see the converse in \cite[Th. 2.17]{JS18}), but there are many cone triples associated with a given cone structure.
	
\subsection{Finsler spacetimes and its lightlike cone structures}\label{Finslight}
A Finsler spacetime is a manifold $M$ together with a Lorentz-Finsler metric  $L:A\rightarrow (0,+\infty)$ such that $A_p$ is also convex for every $p\in M$, and
$L$  can be smoothly extended as $0$ to the boundary of $A$ in $TM\setminus 0$ with
	 vertical Hessian defined as in \eqref{fundten}
that remains of index $\dim M-1$ also at vectors $v$ in the boundary. The set $\{ v\in \bar A\setminus 0:L(v)=0\}$ will be called  the {\em future lightlike cone} of $L$.

Observe that there are several definitions of Finsler spacetimes (see \cite[Appendix A]{JS18}), but with the above definition, the lightlike cone of $L$ is a cone structure (see \cite[Cor. 3.7]{JS18}). Moreover, the converse is true even if there is no uniqueness at all.
\begin{proposition}\label{existsFinsler}
	Given a cone structure $(M,\C)$, there exists a Lorentz-Finsler metric $L:TM\rightarrow (0,+\infty)$ such that $(M,L)$ is a Finsler spacetime with future lightlike cone $\C$.
	\end{proposition}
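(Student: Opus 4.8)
The plan is to manufacture $L$ from a cone triple and then to repair the smoothness of the obvious candidate along the timelike axis, where it is the only thing that fails. First I would produce a \emph{global} cone triple $(\Omega,T,F)$. A future timelike vector field exists because each $A_p$ is open, nonempty and convex and a convex combination of future timelike vectors is again future timelike; hence a partition of unity glues local timelike sections into a global smooth $T$ with $T_p\in A_p$. For $\Omega$, note that each $\bar A_p$ is a salient closed convex cone, so its dual cone has nonempty interior, i.e.\ there are covectors strictly positive on $\bar A_p\setminus\{0\}$; those normalized by $\Omega(T)=1$ form a nonempty convex set stable under convex combinations, so a second partition of unity yields a global non-vanishing $\Omega$ with spacelike kernel and $\Omega(T)=1$. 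Let $F$ be the Finsler metric on $\ker\Omega$ attached to this triple by \eqref{defF}.

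Writing $v=\tau T+y$ with $\tau=\Omega(v)$ and $y=\pi(v)$, \eqref{defF} gives $A=\{\tau>F(y)\}$ and $\C=\{\tau=F(y)\}$. The natural candidate $L_0(v)=\tau^2-F(y)^2$ vanishes exactly on $\C$, is positive on $A$, and on $\{y\neq0\}$ has vertical Hessian $\operatorname{diag}(2,-2g_y)$ with $g_y$ the fundamental tensor \eqref{fundten} of $F$; this has index $\dim M-1$ even along the lightlike directions (where $y\neq0$), so $L_0$ is already a Lorentz--Finsler metric there and extends smoothly across $\C\setminus\{0\}$. The sole defect is the timelike axis $\{y=0,\ \tau>0\}\subset A$: since $F^2$ is only $C^1$ (not $C^2$) at the fiber origin unless $F$ is Riemannian, $L_0$ fails to be smooth there.

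To repair this, fix a Riemannian metric $h$ on $M$, scaled so that $h(y,y)\le F(y)^2$ for all spacelike $y\in\ker\Omega$, and a smooth cutoff $\phi\colon\R\to[0,1]$ with $\phi\equiv0$ on $(-\infty,\varepsilon_1]$ and $\phi\equiv1$ on $[\varepsilon_2,\infty)$, where $0<\varepsilon_1<\varepsilon_2<1$. With the degree-zero quantity $s=F(y)^2/\tau^2$, set
\[
L(v)=\tau^2-\phi(s)\,F(y)^2-\bigl(1-\phi(s)\bigr)\,h(y,y).
\]
This is two-homogeneous, coincides with $\tau^2-h(y,y)$ near the axis (hence is smooth there, since $\phi$ is locally constant) and with $L_0$ near $\C$ (where $s\to1>\varepsilon_2$), so $L=0$ exactly on $\C$, extends smoothly across it, and the chosen scaling of $h$ makes $\phi F^2+(1-\phi)h\le F^2<\tau^2$, i.e.\ $L>0$, on $A$.

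The main obstacle is to check that the vertical Hessian of $L$ keeps index $\dim M-1$ on all of $\bar A\setminus\{0\}$. Away from the transition shell $\varepsilon_1<s<\varepsilon_2$ this is immediate from the two diagonal forms above, so everything hinges on the shell, where the $y$-Hessian of the bracketed term is $2\bigl[\phi\,g_y+(1-\phi)h\bigr]$ (positive definite) plus corrections carrying the factors $\phi'(s),\phi''(s)$ produced by the $\tau$- and $y$-dependence of $s$. I expect this to be the only real work: one must choose $\phi$ and $h$ so that these corrections do not spoil the Lorentzian signature, for instance by performing the interpolation at the level of the strongly convex indicatrices of $F$ and $h$ rather than of the functions $F^2$ and $h(\cdot,\cdot)$, so that the relevant spatial term stays fiberwise strongly convex by construction. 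Granting this, $L$ is smooth, two-homogeneous, positive on $A$, zero precisely on $\C$ and of index $\dim M-1$ up to the boundary, so $(M,L)$ is the desired Finsler spacetime with future lightlike cone $\C$.
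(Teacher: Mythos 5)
Your strategy coincides with the paper's: build a cone triple $(\Omega,T,F)$ (the paper invokes \cite[Lemma 2.15]{JS18}, you use partitions of unity plus cone duality, and both arguments are sound since the sets of admissible $T_p$ and of normalized positive covectors are fiberwise convex), take the candidate $L_0(\tau T+y)=\tau^2-F(y)^2$ of \eqref{Gmetric}, note that it already has the right zero set, positivity and Hessian index wherever $y\neq 0$, and then repair the lack of smoothness along the ray $\R^+T_p$. The paper delegates this last step to \cite[Th. 5.6]{JS18}; you attempt it explicitly, and that is exactly where your argument has a genuine gap.

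The interpolation $L=\tau^2-\phi(s)F(y)^2-(1-\phi(s))h(y,y)$ with $s=F(y)^2/\tau^2$ is never shown to have vertical Hessian of index $\dim M-1$ on the transition shell $\varepsilon_1<s<\varepsilon_2$, and as written there is no reason it should. Writing the spatial part as $h+\phi(s)\bigl(F^2-h\bigr)$, its $y$-Hessian acquires, besides the positive definite term $\phi\, g_y+(1-\phi)h$, the corrections
\[
\phi'(s)\bigl(\de s\otimes \de(F^2-h)+\de(F^2-h)\otimes \de s\bigr)+\bigl(\phi''(s)\,\de s\otimes \de s+\phi'(s)\nabla^2 s\bigr)\bigl(F^2-h\bigr),
\]
together with $\tau$--$y$ cross terms and a contribution to the $(\tau,\tau)$ entry coming from $\partial_\tau s=-2F^2/\tau^3$. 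A homogeneity count shows these corrections are of order one, the same order as the main term: in the shell $F^2-h$ can be comparable to $F^2\sim s\,\tau^2$, $\de_y s\sim 1/\tau$, and $\phi',\phi''$ are bounded below in terms of $(\varepsilon_2-\varepsilon_1)^{-1}$, so there is no smallness to exploit and no sign control. Your own text concedes the point (``I expect this to be the only real work\dots Granting this''), and the fix you gesture at --- interpolating the strongly convex indicatrices rather than the functions $F^2$ and $h$ --- is precisely the nontrivial convexity argument of \cite[Th. 5.6]{JS18} that the paper cites; it cannot be treated as a remark. Until that verification is carried out, the proof is incomplete at the one step that carries all of the difficulty of the proposition. (The rest is fine: your cutoff does confine the modification to a conic neighborhood of $\R^+T$ with closure in $A$, so $L=L_0$ near $\C$ and the boundary conditions in the definition of Finsler spacetime are untouched.)
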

\begin{proof}
	It can be found in \cite[Cor. 5.8]{JS18}. The sketch of the proof is as follows. First it is possible to find a one-form $\Omega$ such that $\Omega(v)>0$ for all causal vectors of $(M,\C)$, and a future timelike vector field  $T$, such that $\Omega(T)=1$ (see \cite[Lemma 2.15]{JS18}). This completely determines a cone triple $(\Omega, T,F)$ associated with $(M,\C)$. Then one can define the Lorentz-Finsler metric 
	\begin{equation}\label{Gmetric}
	G(tT_p+w_p)=t^2-F(w_p)^2,\quad \forall t\in\R,\forall w_p\in \ker(\Omega_p), \forall p\in M.
	\end{equation}
	Notice that the Lorentz-Finsler metric $G$ is not smooth on the vectors proportional to $T_p$, but it can be smoothened in a small convex conic neighborhood of this direction with closure contained in $A$ (see \cite[Th. 5.6]{JS18}).
	\end{proof}
A Lorentz-Finsler metric determines a nonlinear connection and, then, geodesics  (see \cite[\S4]{AJ16}), an exponential map (smooth away from $0$) and normal neighborhoods (see \cite[\S6.1]{JS18}). 
We call the geodesics $\gamma:I\to M$ defined by a Lorentz-Finsler metric {\em $L$-geodesics}. They satisfy  the conservation of energy $L(\dot\gamma)\equiv\mathrm{const.}$ and, in particular, the ones  satisfying $L(\dot\gamma)\equiv 0$ (resp.  $L(\dot\gamma)\equiv E\in (0, +\infty)$) are called {\em lightlike $L$-geodesics} (resp. {timelike $L$-geodesics})\footnote{By definition $L$ is defined and smooth in a wider cone domain  $A^*\supset \bar A$.  In some cases, as for the smoothened Lorentz-Finsler metric $G$ in \eqref{Gmetric}, the maximal $A^*$ is $TM\setminus 0$. We implicitly assume that  lightlike and timelike $L$-geodesics are only the ones whose velocity vector is in the closure of the privileged  cone domain $A$ associated with $\C$.}. A timelike or lightlike $L$-geodesic is also called a {\em causal $L$-geodesic}.

An open subset which is normal for all its points is called a convex neighborhood (of any of these points).
\begin{proposition}\label{convexneigh}
	Given a cone structure $(M,\C)$ and a point $p\in M$, there exists a neighborhood $U$ of $p$ and a Lorentz-Finsler metric $L:TU\rightarrow \R$ with future lightlike cone given by $\C\cap TU$ such that $U$ is convex for this metric.
	\end{proposition}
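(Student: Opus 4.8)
The plan is to take the Lorentz-Finsler metric produced by Proposition \ref{existsFinsler} and upgrade a small enough neighborhood of $p$ to a convex one by means of the classical Whitehead argument, adapted to the (possibly indefinite) Finslerian spray. Concretely, I would invoke Proposition \ref{existsFinsler} to fix the smoothened metric $G$ of \eqref{Gmetric}, whose associated cone domain $A^*$ is all of $TM\setminus 0$. The advantage of using $G$ rather than an arbitrary $L$ is that its geodesic spray $S$ is then smooth on the whole slit tangent bundle $TM\setminus 0$, so that $L$-geodesics of every causal character (timelike, lightlike and spacelike) are defined and depend smoothly on their initial data. Since the future lightlike cone of $G$ is $\C$ by construction, the statement about the lightlike cone being $\C\cap TU$ is automatic once we restrict to $TU$; the whole content of the proposition is the convexity of $U$.

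Next I would recall the exponential map and normal neighborhoods of $L$ (see \cite[\S6.1]{JS18}). The map $\exp_p$ is smooth away from $0$, is $C^1$ at $0$ with $d(\exp_p)_0=\mathrm{id}$, and hence restricts to a homeomorphism of a star-shaped neighborhood of $0\in T_pM$ onto a neighborhood of $p$, a diffeomorphism off the zero vector. I would fix an auxiliary Riemannian metric $h$ on a coordinate neighborhood of $p$ and let $r(x)$ be the square of the $h$-distance to $p$ read in these coordinates. By the usual totally-normal-neighborhood construction, there exist $\delta>0$ and a neighborhood $W$ of $p$ such that for every $q\in W$ the map $\exp_q$ is a diffeomorphism on the $h$-ball $B_\delta(0)\subset T_qM$ and $W\subset \exp_q(B_\delta(0))$; thus any two points of $W$ are joined by a unique $L$-geodesic of size less than $\delta$, depending smoothly on the endpoints.

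The convexity is then extracted from a second-variation estimate. Along any $L$-geodesic $\gamma$ one computes $\tfrac{d^2}{dt^2}\,r(\gamma(t))$; at $p$ this Hessian reduces to the positive-definite form coming from $h$, since the spray coefficients can be normalised to vanish there. By continuity of $S$ on $TM\setminus 0$, the Hessian stays positive definite on a neighborhood $V\subset W$ of $p$ and for all unit initial velocities, so that $r\circ\gamma$ is strictly convex for every geodesic staying in $V$. Taking $U$ to be a small sublevel set $\{r<\eta\}\subset V$, the unique connecting geodesic furnished by $W$ together with the strict convexity of $r\circ\gamma$ forbids an interior maximum of $r$ along it, forcing that geodesic to remain inside $U$; this is precisely the convexity of $U$.

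The main obstacle I expect is the low regularity of $\exp_p$ at the zero section: the Finslerian exponential is in general only $C^1$ there, so the normal coordinates are merely $C^1$ charts and the Hessian identity at $p$ cannot be read off naively from vanishing Christoffel symbols. I would circumvent this by carrying out the second-variation computation only for geodesics with nonzero initial velocity, where $S$ is smooth, and passing to the limit in the estimate rather than differentiating $\exp_p$ twice at $0$; equivalently, one runs Whitehead's argument directly with the smooth spray $S$ on $TM\setminus 0$ and an auxiliary Riemannian comparison. The indefinite signature of $L$ plays no role here, precisely because, by the choice of the smoothened metric $G$, the spray is smooth on all non-radial directions.
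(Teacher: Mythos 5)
Your proposal follows essentially the same route as the paper: build a Lorentz--Finsler metric with future lightlike cone $\C$ via the construction \eqref{Gmetric} of Proposition~\ref{existsFinsler}, and then apply Whitehead's classical convex-neighborhood argument to its (smooth, possibly indefinite) spray on the slit tangent bundle, which is exactly what the paper does by citing \cite{whitehead}. The one detail you gloss over is that to get the spray smooth on \emph{all} of $TU\setminus 0$ the smoothening procedure of \cite[Th. 5.6]{JS18} must be applied around $-T_p$ as well as around $T_p$ (the paper states this explicitly), since $G$ as written in \eqref{Gmetric} fails to be smooth along both radial directions $\pm T_p$.
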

\begin{proof}
The only thing one needs is to find a Lorentz-Finsler metric $L:TU\rightarrow \R$ with future lightlike cone given by $\C\cap TU$. 

This can be done using \eqref{Gmetric} and the smoothening procedure in \cite[Th. 5.6]{JS18} applied to $T_p$ and to $-T_p$. 
Then one can use a classical result by Whitehead (see \cite{whitehead}) about the existence of convex neighborhoods.
	\end{proof}
The auxilliary Lorentz-Finsler metric obtained in Proposition \ref{existsFinsler} can be used to compute the cone geodesics (recall Definition~\ref{conegeos}) of the cone structure.
\begin{proposition}\label{coneLgeos}
Let $(M,\C)$ be a cone structure and $L$ a Lorentz-Finsler metric having $\C$ as a future lightlike cone. Then the cone geodesics of $(M,\C)$ are the lightlike pregeodesics of $(M,L)$.
\end{proposition}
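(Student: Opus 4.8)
The plan is to reduce the statement to a purely local one by means of convex neighborhoods, and then to invoke the Finslerian analogue of the classical Penrose characterization of the horismos. First I would fix an arbitrary parameter $t_0$ and, using Proposition~\ref{convexneigh}, choose a convex neighborhood $U$ of $\gamma(t_0)$ on which $L$ is defined with future lightlike cone $\C\cap TU$. The crucial preliminary observation is that the causal and chronological relations appearing in Definition~\ref{conegeos} depend only on the cone domain $A$ and its boundary $\C$ (the future timelike and lightlike vectors), which is exactly the causal cone of $L$; hence the relations $\leq_U$ and $\ll_U$ used to define a cone geodesic coincide with the causal and chronological relations determined by the Lorentz--Finsler metric $L$ inside $U$. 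This lets me pass freely between the cone-structure language and the $L$-geodesic language.

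The key local tool I would use is the following characterization of the horismos in a convex neighborhood of a Lorentz--Finsler metric: for $p,q\in U$ one has $q\in J^+(p)\setminus I^+(p)$ (that is, $p\leq_U q$ but not $p\ll_U q$) \emph{if and only if} there is a future lightlike $L$-geodesic from $p$ to $q$ contained in $U$, unique up to reparametrization; moreover, any causal curve in $U$ from $p$ to $q$ with $q\notin I^+(p)$ is a reparametrization of that lightlike geodesic. Geometrically this is because $\exp_p$ carries the lightlike cone in $T_pU$ onto $\partial I^+(p)\cap U$, and points on this boundary are reached by lightlike geodesics but not by timelike curves, since $I^+(p)$ is open. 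I would import this from the local causality theory in normal neighborhoods of a Lorentz--Finsler metric (see \cite[\S6.1]{JS18} and \cite{Min19}) rather than reprove it.

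Granting this tool, both inclusions follow. If $\gamma$ is a cone geodesic, then for $s\leq s'$ in $I_\varepsilon$ the locally horismotic condition gives $\gamma(s)\leq_U\gamma(s')$ with no timelike curve in $U$ joining them, so $\gamma(s')\notin I^+(\gamma(s))$; by the local tool the arc $\gamma|_{[s,s']}$ is a reparametrization of a lightlike $L$-geodesic. (In particular $\gamma$ cannot carry a timelike velocity anywhere, for that would produce a local timelike connection between nearby points.) Conversely, if $\gamma$ is a lightlike pregeodesic, then on each such $U$ the arc $\gamma|_{I_\varepsilon}$ is a lightlike $L$-geodesic from $\gamma(s)$ to $\gamma(s')$, whence $\gamma(s')\in J^+(\gamma(s))\setminus I^+(\gamma(s))$; this says precisely that $\gamma(s)\leq_U\gamma(s')$ while no timelike curve in $U$ connects them, which is local horismoticity. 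Shrinking $\varepsilon$ so that $\gamma|_{I_\varepsilon}\subset U$ completes this direction.

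I expect the two delicate points to be the following. First, the local horismos lemma itself — specifically the assertion that a causal curve landing on $\partial I^+(p)$ must be a reparametrized lightlike geodesic, even when it is only piecewise smooth — is the genuine Finslerian analogue of Penrose's result and rests on the achronality of the lightlike cone together with a corner-rounding argument; I would cite it from \cite{JS18,Min19}. Second, the gluing and regularity: the local analysis only yields that each short arc is a lightlike geodesic, so one must argue that these arcs assemble into a single smooth lightlike pregeodesic. This follows from the uniqueness of $L$-geodesics with prescribed lightlike initial direction (ODE uniqueness for the geodesic spray), which forces overlapping arcs to coincide; the corner-free version of the local lemma simultaneously rules out genuine breaks in the a priori only piecewise-smooth cone geodesic, since a corner would permit a timelike shortcut and destroy horismoticity.
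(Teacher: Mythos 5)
Your argument is correct and is essentially the one behind the paper's proof, which simply cites \cite[Th.\ 6.6]{JS18}: localization to convex neighborhoods via Proposition~\ref{convexneigh}, the Finslerian local horismos characterization (that causal-but-not-chronological connection in a normal neighborhood forces a reparametrized lightlike $L$-geodesic), and the gluing/corner-rounding step are exactly the ingredients of that reference. The only detail worth adding is that in the forward direction you should take your convex neighborhood \emph{inside} the neighborhood $U$ furnished by Definition~\ref{conegeos}, so that the absence of timelike connecting curves is inherited by the smaller set.
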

\begin{proof}
	See \cite[Th. 6.6]{JS18}.
\end{proof}

\subsection{Limit curves}\label{limitcurves}
As a first step to introduce limit curves in cone structures, we will need to extend the notion of a causal curve to the class of continuous curves.
\begin{definition}
Given a cone structure $(M,\C)$ and a continuous curve $\gamma:[a,b]\rightarrow M$, we say that $\gamma$ is causal (resp. timelike) if for any $t_0\in[a,b]$ there exists a convex neighborhood of $\gamma(t_0)$ as in Proposition \ref{convexneigh} and $\epsilon>0$ such that for all $t,s\in (t_0-\epsilon,t_0+\epsilon)\cap [a,b]$, and $t<s$, the $L$-geodesic connecting $\gamma(t) $ to $\gamma(s)$ is causal (resp. timelike).
\end{definition}
Even if the last definition seems to have a dependence on the metric $L$, the fact that the lightlike pregedeodesics of all the possible $L$ coincide (recall Proposition~\ref{coneLgeos}) implies that it only depends on the cone structure.

Moreover, let us recall the definition of limit curve.
\begin{definition}
Given a manifold $M$, we say that a curve $\gamma:I\rightarrow M$ is a {\em limit curve} of a sequence of curves $\{\gamma_n\}$ if for all $p$ in the image of $\gamma$ and every neighborhood  $W$ of $p$,  there exists a subsequence $\{\gamma_m\}$ of $\{\gamma_n\}$ such  that   all but a finite number of the curves in the subsequence intersect $W$.
\end{definition}
We have already all the ingredients to prove the existence of limit curves for causal curves in a cone structure.
\begin{proposition}\label{existslimitcurve}
Let $(M,\C)$ be a strongly causal cone structure. Then any limit curve of causal curves is also causal.
\end{proposition}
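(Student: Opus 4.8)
The plan is to verify, parameter by parameter, the local definition of a continuous causal curve, thereby reducing the global claim to a closedness statement inside a convex neighborhood where causality is governed by an auxiliary Lorentz--Finsler metric. So I would fix $t_0$ in the domain of the (continuous) limit curve $\gamma$, set $p=\gamma(t_0)$, and by Proposition~\ref{convexneigh} pick a convex neighborhood $U$ of $p$ together with a Lorentz--Finsler metric $L$ on $TU$ whose future lightlike cone is $\C\cap TU$. Invoking strong causality, I would then shrink $U$ to an open set $V$ with $p\in V\subset U$ such that every causal curve issuing from $V$ and leaving $U$ never returns to $V$, and, using continuity of $\gamma$, choose $\epsilon>0$ with $\gamma(I_\epsilon)\subset V$, where $I_\epsilon=(t_0-\epsilon,t_0+\epsilon)\cap[a,b]$. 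It then suffices to show that for all $t<s$ in $I_\epsilon$ the $L$-geodesic from $\gamma(t)$ to $\gamma(s)$ is future causal.

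Next, I would fix such $t<s$, set $q_1=\gamma(t)$, $q_2=\gamma(s)\in V$, and use the limit-curve property at $q_1$ and $q_2$ --- with a diagonal extraction over countable neighborhood bases of each point --- to produce a subsequence $\{\gamma_m\}$ and parameters $t_m<s_m$ with $\gamma_m(t_m)\to q_1$ and $\gamma_m(s_m)\to q_2$, the ordering $t_m<s_m$ coming from the order-preserving nature of the limit-curve construction. For $m$ large both endpoints lie in $V$, and since $\gamma_m$ is future causal the sub-arc $\gamma_m|_{[t_m,s_m]}$ is a causal curve starting in $V$; were it to leave $U$, it could not return to $V$, contradicting $\gamma_m(s_m)\in V$. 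Hence it stays in $U$, and therefore $\gamma_m(t_m)\leq_U\gamma_m(s_m)$.

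Finally, I would pass to the limit. In the convex neighborhood $U$ one has $r\leq_U r'$ if and only if the unique $L$-geodesic from $r$ to $r'$ is future causal, i.e. $\exp_r^{-1}(r')\in\bar A\cup\{0\}$; since $\exp$ is continuous and $\bar A$ is closed, $\leq_U$ is a closed relation on $U\times U$. Letting $m\to\infty$ in $\gamma_m(t_m)\leq_U\gamma_m(s_m)$ yields $q_1\leq_U q_2$, that is, the $L$-geodesic from $\gamma(t)$ to $\gamma(s)$ is future causal. As $t<s$ in $I_\epsilon$ and $t_0$ were arbitrary, $\gamma$ is causal.

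The hard part will be the middle step: arranging the approximating pairs $\gamma_m(t_m),\gamma_m(s_m)$ to respect the time orientation and, above all, ensuring that the connecting sub-arcs remain inside $U$. Strong causality is exactly what excludes the pathological escape-and-return behavior that would otherwise break the passage to the limit, while the closedness of $\leq_U$ in a convex neighborhood is what upgrades the limiting causal relations into a genuine causal relation for $\gamma$. A secondary point to be careful about is that the weak (accumulation) notion of limit curve used here does not by itself deliver the ordering $t_m<s_m$, so this must be extracted from the construction that produced $\gamma$ or enforced via the trapping argument in $U$.
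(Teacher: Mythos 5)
Your proposal is correct and follows essentially the same route as the paper, which simply cites the Lorentzian argument of Beem--Ehrlich--Easley (Lem.~3.29) with the same modification you make: convex neighborhoods from Proposition~\ref{convexneigh} shrunk via strong causality so that causal sub-arcs are trapped in $U$, followed by passage to the limit using the closedness of $\leq_U$. You merely spell out the details (including the parameter-ordering subtlety) that the paper delegates to the reference.
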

\begin{proof}
Analogous to the Lorentzian one in \cite[Lem. 3.29]{BeEhEa96}, but we will replace the choice of convex neighborhoods which are causally convex, whose existence is not proven in that reference, with something weaker.
 Consider a convex neighborhood $U_t$ of $\gamma(t)$ for $t\in I$ (according to Proposition~\ref{convexneigh}) and then a smaller neighborhood $V_t$ obtained using the strong causality, namely, if a causal curve starting in $V_t$ leaves $U_t$ never returns to $V_t$. Now take a locally finite collection $\{V_k\}$ of $\{V_t\}_{t\in I}$ that covers $\gamma$. Then one can basically repeat the reasoning in \cite[Lem. 3.29]{BeEhEa96}.
 \end{proof}
\begin{proposition}\label{generallimitcurve}
Let $\{\gamma_n\}$ be a sequence of (future) inextendible causal curves in a cone structure $(M,\C)$. If $p$ is an accumulation point of $\{\gamma_n\}$, then there exists a (future) inextendible causal curve $\gamma$ which is a limit curve of $\{\gamma_n\}$ with $p$ in the image of $\gamma$.
\end{proposition}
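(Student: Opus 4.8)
The plan is to establish this as a standard limit curve theorem adapted from the Lorentzian setting, building directly on the local convex structure and on Proposition~\ref{existslimitcurve}. First I would fix an auxiliary Riemannian metric $h$ on $M$ and, without loss of generality, reparametrize each $\gamma_n$ by $h$-arclength, so that all of them are parametrized on a common interval of the form $[0,+\infty)$ (or on a subinterval large enough, using that the curves are future inextendible). Since $p$ is an accumulation point of $\{\gamma_n\}$, I would choose a subsequence and a sequence of parameter values $t_n$ with $\gamma_n(t_n)\to p$; after an affine shift of parameters I may assume $t_n=0$, so that $\gamma_n(0)\to p$.

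The core construction is a diagonal/Arzel\`a--Ascoli argument to extract a limit curve through $p$. The key point that makes the causal curves equicontinuous and hence suitable for Arzel\`a--Ascoli is the local structure provided by Proposition~\ref{convexneigh}: around each point there is a convex neighborhood carrying an auxiliary Lorentz--Finsler metric $L$ whose causal cones are exactly those of $\C$. Inside such a neighborhood the causal curves have $h$-velocity bounded between positive constants (the causal cone is a proper cone, uniformly on compact sets), so an $h$-unit-speed causal curve cannot oscillate wildly. I would therefore cover a neighborhood of $p$ by finitely many convex neighborhoods $U_k$ with the smaller sets $V_k$ from strong causality, extract by Arzel\`a--Ascoli a uniformly convergent subsequence on $[0,\delta]$ for some $\delta>0$, and define $\gamma$ on $[0,\delta]$ as the uniform limit. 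By Proposition~\ref{existslimitcurve} the resulting $\gamma$ is causal, and it passes through $p=\gamma(0)$.

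To obtain a \emph{(future) inextendible} limit curve defined on a maximal interval, I would iterate this local extraction and diagonalize. Concretely, I would extend $\gamma$ step by step: having built $\gamma$ and a nested subsequence on $[0,T]$, I look at the accumulation of that subsequence near the future endpoint $\gamma(T)$ and repeat the local Arzel\`a--Ascoli argument to push the definition past $T$; the standard supremum-of-extension argument then yields a limit curve defined on the maximal future parameter interval. The remaining task is to verify that this maximal limit curve is future inextendible, for which I would argue by contradiction: if $\gamma$ had a future endpoint $q$, then by future inextendibility of the $\gamma_n$ combined with the local extraction near $q$, one could extend $\gamma$ strictly beyond $q$, contradicting maximality.

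The main obstacle, and the step deserving the most care, is the equicontinuity estimate underlying the Arzel\`a--Ascoli extraction. In the Lorentzian reference \cite[Lem. 3.29]{BeEhEa96} this rests on causally convex convex neighborhoods, whose existence is precisely what Proposition~\ref{existslimitcurve} was designed to circumvent; so here I must extract equicontinuity directly from the convex neighborhoods of Proposition~\ref{convexneigh} together with the strong-causality sets $V_k$, ensuring that an $h$-unit-speed causal segment entering $V_k$ stays controlled and cannot leave and re-enter in a way that breaks uniform convergence. Once that uniform control is in place, the passage to the limit being causal is already handled by Proposition~\ref{existslimitcurve}, and the inextendibility follows from a routine maximality argument; the delicate part is purely the uniform local bound that legitimizes the compactness extraction.
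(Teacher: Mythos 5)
Your argument, as written, rests on hypotheses that Proposition~\ref{generallimitcurve} does not grant. You invoke strong causality twice: once when you take ``the smaller sets $V_k$ from strong causality'' to organize the extraction, and once when you conclude that the limit is causal ``by Proposition~\ref{existslimitcurve}''. But Proposition~\ref{existslimitcurve} explicitly assumes that $(M,\C)$ is strongly causal, whereas Proposition~\ref{generallimitcurve} is stated for an arbitrary cone structure; so your mechanism for establishing causality of the limit is simply not available here, and what you prove is a weaker statement. The fix is to check causality of the limit \emph{locally and directly}: inside a convex neighborhood from Proposition~\ref{convexneigh}, the $L$-geodesic from $\gamma(t)$ to $\gamma(s)$ is obtained as a limit of the causal $L$-geodesics from $\gamma_{n_k}(t)$ to $\gamma_{n_k}(s)$ (continuity of the exponential map), and causality survives the limit because the causal cone $\bar A$ is closed; no non-return property of small neighborhoods is needed. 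A secondary point deserving more care than you give it is non-degeneracy: you must rule out that the Arzel\`a--Ascoli limit is constant on some interval (a constant curve is not causal), which is where one genuinely uses that $\C_p\cup A_p$ is a proper cone, e.g.\ via the local time function of a convex neighborhood.

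For comparison, the paper takes a shorter route that avoids redoing the Arzel\`a--Ascoli machinery for cone structures altogether: it constructs a classical Lorentzian metric $g=\Omega\otimes\Omega-g_R$ on $M$, with $g_R$ a Riemannian metric on $\ker\Omega$ satisfying $g_R<F^2$, so that every $\C$-causal vector is $g$-timelike. The curves $\gamma_n$ are then inextendible $g$-causal curves, the classical limit curve theorem \cite[Prop.~3.31]{BeEhEa96} yields an inextendible limit curve $\gamma$ through $p$ (this settles existence, inextendibility and non-degeneracy in one stroke), and only afterwards is $\gamma$ shown to be $\C$-causal via the convex neighborhoods of Proposition~\ref{convexneigh}. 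If you want to keep your direct approach, replace both appeals to strong causality by the local convex-neighborhood argument above; otherwise the widened-cone reduction is the cleaner path.
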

\begin{proof}
Observe that it is possible to construct a Lorentzian metric $g$ on $M$ such that all the vectors in $\C$ are timelike vectors for $g$. Indeed, consider a Lorentz-Finsler metric $G$ as in \eqref{Gmetric} and
 choose any Riemannian metric $g_R$ on the bundle $\ker (\Omega)$ such that, for all $w\in\ker(\Omega)$,  $F^2(w)> g_R(w,w)$; then define $g(v,u):=\Omega(u)\Omega(v)  -g_R\big(\pi(v),\pi(v)\big)$, where $\pi$ is the projection $\pi:TM\to \ker \Omega$. Since the future lightlike vectors of $G$ are the vectors in $\C$, it is clear that any $v\in \C$ is $g$-timelike.   Now the curves $\{\gamma_n\}$ are also causal future inextendible for $(M,g)$, and applying \cite[Prop. 3.31]{BeEhEa96}, we deduce the existence of a future inextendible limit curve $\gamma$. 
Using the convex neighborhoods obtained in Proposition~\ref{convexneigh} in the same way as in the proof of \cite[Prop. 3.31]{BeEhEa96}, 
 it follows that $\gamma$ is $\C$-causal. 
\end{proof}

\subsection{Some basic results of causality}\label{somebasic}
Propositions \ref{existsFinsler} and \ref{convexneigh} will allow us to use all the tools that Finsler geometry provides to study the causality of cone structures. As it was observed in \cite{MinConvex}, the existence of convex neighborhoods for Finsler spacetimes allows for a generalization of most of the resuls of the standard theory of causality. We will restrict  in what follows to the analysis of how to generalize the results that will be needed in the description of the causality in the presence of a Killing field.
\begin{proposition}\label{basicprops}
	Let $(M,\C)$ be a cone structure. If $p,r,q\in M$, the following properties hold:
	\begin{enumerate}[(i)]
		\item $I^+(p)$ and $I^-(p)$ are open subsets for all $p\in M$,
		\item if $p\ll r$ and $r\leq q$, then $p\ll q$,
			\item if $p\leq r$ and $r\ll q$, then $p\ll q$,
			\item  If $q\in J^+(p)\setminus I^+(p)$, then there exists a cone geodesic without conjugate points from $p$ to $q$,
			\item  If $q\in J^-(p)\setminus I^-(p)$, then there exists a cone geodesic without conjugate points from $q$ to $p$,
			\item $\bar J^+(p)=\bar I^+(p)$ and $\bar J^-(p)=\bar I^-(p)$.
		\end{enumerate}
	\end{proposition}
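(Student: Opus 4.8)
The plan is to reduce each of the six statements to local computations in a convex neighborhood (Proposition~\ref{convexneigh}) where a genuine Lorentz--Finsler metric $L$ is available, and then to import the corresponding Lorentzian/Finsler-spacetime arguments through the dictionary established by Propositions~\ref{existsFinsler}, \ref{convexneigh} and \ref{coneLgeos}. Since $\C$ is the future lightlike cone of such an $L$ and the lightlike pregeodesics of $L$ are exactly the cone geodesics, every assertion about $\C$-causality can be phrased as an assertion about $L$-causality, and for the latter the standard machinery of causal theory applies verbatim.

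First I would establish (i): the openness of $I^\pm(p)$ follows from the fact that the timelike cone $A$ is an \emph{open} subset of $TM$, so if $\gamma$ is a future timelike curve from $p$ to $q$ with $\dot\gamma\in A$, then small $C^1$-perturbations of the endpoint keep the velocity inside $A$; concretely, work in a convex neighborhood of $q$ and push the endpoint along $L$-timelike geodesics, which fill an open set because the timelike exponential map is an open map on the open cone. Statements (ii) and (iii) are the standard ``push-up'' lemmas: if $p\ll r$ via a timelike curve and $r\le q$ via a causal curve, one concatenates the two and smooths the corner inside a convex neighborhood of $r$, using that a causal curve immediately following a timelike one can be deformed into a timelike one (this is where convexity of $A_p$, assumed in the Finsler-spacetime definition, is used); symmetrically for (iii).

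For (iv) and (v) I would argue that a causal relation $q\in J^+(p)$ which is \emph{not} chronological, $q\notin I^+(p)$, must be realized by a curve that is lightlike and, moreover, a cone geodesic without conjugate points. The mechanism is: take a sequence of causal curves from $p$ to $q$ and extract a limit causal curve via Propositions~\ref{existslimitcurve} and \ref{generallimitcurve} (or directly via the limit-curve theorem in a suitable auxiliary $(M,g)$ as in Proposition~\ref{generallimitcurve}); the limit curve is causal from $p$ to $q$, and if any segment of it were timelike, or if it had a conjugate point, then (iii) or the focusing argument would produce a timelike curve from $p$ to $q$, contradicting $q\notin I^+(p)$. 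Hence the limit curve is everywhere lightlike and conjugate-point-free, i.e.\ a cone geodesic, and Proposition~\ref{coneLgeos} identifies it as a lightlike $L$-pregeodesic; (v) is the time-dual statement obtained by replacing $\C$ with $-\C$.

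Finally, (vi) follows from (i) together with (ii)--(iii): the inclusion $\bar I^+(p)\subset\bar J^+(p)$ is immediate from $I^+(p)\subset J^+(p)$, while for the reverse one shows $J^+(p)\subset\bar I^+(p)$ by proving that every $q\in J^+(p)$ is a limit of points in $I^+(p)$ --- indeed pick $q'\in I^+(q)$ nearby (possible since $I^+(q)$ is open and nonempty, and $q$ lies in its closure) and use (ii) to get $q'\in I^+(p)$, then let $q'\to q$ along timelike curves shrinking to $q$. Taking closures gives $\bar J^+(p)=\bar I^+(p)$, and the past version is dual.

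I expect the main obstacle to be making the limit-curve extraction in (iv)--(v) fully rigorous in the cone setting: one must ensure that the limit curve lands \emph{between} $p$ and $q$ (not escaping to infinity or stopping short) and that the absence of a timelike connecting curve genuinely forces both lightlikeness and the absence of conjugate points. The escape issue is handled by working with inextendible curves and the compactness supplied by strong causality (Proposition~\ref{existslimitcurve}); the focusing step requires the Finsler-spacetime version of the fact that a lightlike geodesic ceases to maximize past a conjugate point, which is exactly the content one imports via the convex-neighborhood calculus of \cite{JS18,MinConvex}.
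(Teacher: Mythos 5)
Your proposal follows essentially the same route as the paper: pass to an auxiliary Lorentz--Finsler metric $L$ with future lightlike cone $\C$ (Propositions~\ref{existsFinsler}, \ref{convexneigh}, \ref{coneLgeos}) and then import the standard causal-theoretic arguments --- openness of $A$ for (i), the push-up lemma in convex neighborhoods for (ii)--(iii), the ``maximizing causal curves are lightlike geodesics without conjugate points'' deformation for (iv)--(v), and the approximation $J^\pm(p)\subset\bar I^\pm(p)$ for (vi). The paper simply delegates (ii)--(v) to the corresponding results in the Finsler-spacetime literature rather than re-deriving them. One deviation worth flagging: in (iv)--(v) you insert a limit-curve extraction before applying the deformation argument. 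This is superfluous --- $q\in J^+(p)$ already hands you a single causal curve from $p$ to $q$, and the correct mechanism (which you do state) is that \emph{any} such curve, unless it is a lightlike cone geodesic without conjugate points, can be deformed to a timelike curve, contradicting $q\notin I^+(p)$. Moreover, relying on Proposition~\ref{existslimitcurve} there is not legitimate, since that proposition assumes strong causality, which is not part of the hypotheses here; and without further assumptions a limit of causal curves from $p$ to $q$ need not itself connect $p$ to $q$. Dropping the limit-curve step removes both problems and recovers the paper's argument. A second, cosmetic point: in (vi) the relevant push-up is item (iii) ($p\leq q$ and $q\ll q'$ give $p\ll q'$), not (ii) as you wrote; the argument itself is fine and is essentially the one the paper sketches.
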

\begin{proof}
	Observe that we can assume that there exists a Lorent-Finsler metric $L$ as in Proposition \ref{existsFinsler}. Part $(i)$ is a straightforward consequence of  the fact that timelike curves have tangent vectors included in the open subset  $A$. Part $(ii)$ and $(iii)$ follows, for example, from  \cite[Prop. 6.5]{AJ16}. Parts $(iv)$ and $(v)$ are a consequence of \cite[Prop. 6.8]{AJ16} when $P$ is a point and taking into account that the notion of conjugate points for cone geodesics makes sense thanks to \cite[Ths. 2.5 and 3.8]{JaSo21}. Finally, $(vi)$ can be deduced by checking that $J^\pm(p)\subset \bar I^\pm(p)$ using, for example, variations of curves as in 
   \cite[Prop. 6.5]{AJ16} or, when there is a lightlike geodesic $\gamma$ from $p$ to $q\in J^\pm (p)$,  timelike geodesics  close to $\gamma$.
	\end{proof}
\begin{definition}
	A cone structure $(M,\C)$ is said to be past (resp. future) reflective if $I^+(p)\supset I^+(q)$ 
	(resp. $I^-(p)\supset I^-(q)$) implies that $I^-(p)\subset I^-(q)$ 
	(resp. $I^+(p)\subset I^+(q)$).
	\end{definition}
\begin{proposition}\label{causalcontrefl}
	A cone structure $(M,\C)$ is causally continuous if and only if it is distinguishing and future and past reflective.
\end{proposition}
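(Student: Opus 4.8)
The plan is to decouple the two ingredients of causal continuity. By Definition~\ref{causalprop}(vii) a cone structure is causally continuous exactly when the maps $I^\pm:M\to\mathcal P(M)$ are injective \emph{and} continuous; injectivity of $I^+$ and $I^-$ is precisely the distinguishing condition, which appears verbatim on both sides of the asserted equivalence. Hence it suffices to prove, for an arbitrary cone structure, that continuity of $I^\pm$ in the topology of $\mathcal P(M)$ is equivalent to reflectivity. More precisely, I would establish the two time-dual statements
\begin{equation*}
I^+ \text{ outer continuous} \iff \text{future reflective}, \qquad I^- \text{ outer continuous} \iff \text{past reflective},
\end{equation*}
and then simply conjoin them with the distinguishing hypothesis. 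Note that at every $p$ the cone structure provides future and past timelike directions, so $I^\pm(p)\neq\emptyset$ and $p\in\overline{I^\pm(p)}$, a triviality I will use repeatedly.

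The technical backbone is a \emph{dictionary} translating the set inclusions that define reflectivity (see the definition preceding this proposition) into membership in chronological closures:
\begin{equation*}
I^+(q)\subseteq I^+(p) \iff q\in\overline{I^+(p)}, \qquad I^-(p)\subseteq I^-(q) \iff p\in\overline{I^-(q)}.
\end{equation*}
The forward implication ``$\subseteq\Rightarrow\in\overline{\phantom{I}}$'' is immediate: $q\in\overline{I^+(q)}\subseteq\overline{I^+(p)}$ by monotonicity. For the reverse I would use the openness of the chronological relation $\{(a,b):a\ll b\}$ on $M\times M$ (which follows from Proposition~\ref{basicprops} together with the existence of an intermediate chronological point) to upgrade $q\in\overline{I^+(p)}$ — a sequence $q_n\to q$ with $p\ll q_n$ — to the inclusion: if $q\ll x$ then $q_n\ll x$ for large $n$, whence $p\ll q_n\ll x$ gives $p\ll x$ by the push-up property of Proposition~\ref{basicprops}.

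With the dictionary in hand, the outer-continuity equivalences become pure point-set bookkeeping. Restating continuity of $I^-$ at $q$: for each $x$ outside $\overline{I^-(q)}$ one needs a neighborhood $U$ of $q$ with $x\notin I^-(p)$ for all $p\in U$; since $x\in I^-(p)$ is the same as $p\in I^+(x)$, this asks for $U\cap I^+(x)=\emptyset$, and such a neighborhood exists iff $q\notin\overline{I^+(x)}$. Contrapositively, $I^-$ outer continuity at $q$ is exactly the implication $q\in\overline{I^+(x)}\Rightarrow x\in\overline{I^-(q)}$ for all $x$, which through the dictionary is past reflectivity; the statement for $I^+$ is the mirror image. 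Combining, a distinguishing cone structure is causally continuous iff it is both future and past reflective.

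The hard part will be the reverse direction of the dictionary, i.e.\ upgrading closure membership to an inclusion of chronological sets: it rests squarely on the openness of $\ll$, which in the Finsler/cone setting is guaranteed by Proposition~\ref{basicprops} and the convex-neighborhood technology of Proposition~\ref{convexneigh}, and one must be careful that the outer-continuity condition is read relative to the closures $\overline{I^\pm}$, so that the lightlike boundary directions are handled correctly rather than spuriously obstructing continuity. Where one prefers to reason with curves instead of the abstract openness of $\ll$, the limit-curve machinery of Proposition~\ref{generallimitcurve} supplies the causal curves realizing these closure relations and reproduces the classical Hawking--Sachs argument; I would cite it as the fallback for the delicate step.
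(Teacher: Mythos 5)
Your proposal follows essentially the same route as the paper's proof, which consists precisely of the two ingredients you supply: the closure ``dictionary'' $I^+(q)\subseteq I^+(p)\iff q\in\overline{I^+(p)}$ is \cite[Lemma 3.42]{MinSan} (the paper's remark that reflectivity can be restated via closures), and the translation of outer continuity of $I^\pm$ into future/past reflectivity is \cite[Prop. 3.46]{MinSan}, after which the distinguishing condition is, as you say, just the injectivity half of Definition~\ref{causalprop}(vii). The only step you elide is that outer continuity quantifies over compact sets $K$ rather than singletons, so upgrading your pointwise version needs one further limit argument (extract $x_n\to x\in K$ and reuse the openness of $\ll$ and push-up from Proposition~\ref{basicprops}); this is the standard step in the cited proof and does not affect the correctness of your approach.
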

\begin{proof}
It follows from the fact that outer continuity of $I^+$ (resp. $I^-$) is equivalent to future (resp. past) reflectivity. The proof goes on as in \cite[Prop 3.46]{MinSan} taking into account that in the definition of past and future reflectivity one can replace the chronological future and past by their closures (see \cite[Lemma 3.42]{MinSan}).
\end{proof}
\begin{proposition}\label{globimpclosedcones}
A globally hyperbolic cone structure $(M,\C)$ has closed causal sets $J^\pm(p)$, for all $p\in M$.
\end{proposition}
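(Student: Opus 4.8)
The plan is to prove that $J^+(p)$ is closed; the statement for $J^-(p)$ then follows by applying the same argument to the time-reversed cone structure $-\C$ (whose causal pasts are the causal futures of $\C$), since global hyperbolicity is symmetric under this operation. So I fix $p\in M$, take a sequence $q_n\in J^+(p)$ with $q_n\to q$, and let $\gamma_n\colon[0,1]\to M$ be causal curves with $\gamma_n(0)=p$ and $\gamma_n(1)=q_n$; the goal is to produce a causal curve from $p$ to $q$, which gives $q\in J^+(p)$.

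First I would trap the curves in a compact set. Since $A_q\neq\emptyset$, the chronological future $I^+(q)$ is nonempty, so I pick $q'$ with $q\ll q'$. Because $I^-(q')$ is open by Proposition~\ref{basicprops}(i) and contains $q$, we have $q_n\in I^-(q')$ for all large $n$, hence $q_n\leq q'$. Every point $x$ on $\gamma_n$ satisfies $p\leq x\leq q_n\leq q'$, so $\gamma_n\subset K:=J^+(p)\cap J^-(q')$, and $K$ is compact by global hyperbolicity.

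Next I would run a limit-curve argument inside $K$. Reparametrizing each $\gamma_n$ by $h$-arclength for an auxiliary complete Riemannian metric $h$ makes them $1$-Lipschitz, so I must first bound their lengths $a_n$ uniformly. If $a_n\to\infty$ along a subsequence, then extending the $\gamma_n$ to future-inextendible causal curves and applying Proposition~\ref{generallimitcurve} (with $p$ as accumulation point) yields a future-inextendible causal limit curve $\sigma$ with image in $K$; but strong causality forbids a future-inextendible causal curve from being imprisoned in a compact set (covering $K$ by the strongly causal neighborhoods of Proposition~\ref{convexneigh} and Definition~\ref{causalprop}(v), such a curve would have to leave some neighborhood and return to it infinitely often), a contradiction. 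Hence $\sup_n a_n<\infty$, and after extending every $\gamma_n$ to a common interval $[0,A]$ by letting it rest at $q_n$, Arzel\`a--Ascoli gives a uniformly convergent subsequence $\gamma_n\to\gamma$ with values in $K$. By the local argument of Propositions~\ref{existslimitcurve} and \ref{generallimitcurve} — passing to convex neighborhoods and using that causality of a continuous limit curve is detected by the connecting $L$-geodesics — the limit $\gamma$ is causal, with $\gamma(0)=p$ and $\gamma(a)=\lim_n\gamma_n(a_n)=\lim_n q_n=q$ for $a=\lim_n a_n$. Thus $\gamma|_{[0,a]}$ is a causal curve from $p$ to $q$.

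I expect the main obstacle to be the uniform bound on the arclengths $a_n$, that is, the non-imprisonment of future-inextendible causal curves in a compact set under strong causality: this is the one place where strong causality (rather than the mere compactness of the sets $J^+\cap J^-$) is genuinely used, and it is precisely what prevents the naive limit from ``escaping to infinity'' while remaining in $K$. The remaining steps — trapping the $\gamma_n$ in $K$ and upgrading the uniform limit of causal curves to a causal curve — are direct adaptations of the Lorentzian limit-curve theorem already transcribed to the setting of cone structures in Propositions~\ref{existslimitcurve} and \ref{generallimitcurve}.
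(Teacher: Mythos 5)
Your argument is correct in outline, but it takes a genuinely different and substantially heavier route than the paper. The paper's proof follows \cite[Prop.~3.71]{MinSan}: given $q\in\bar J^+(p)$, one uses $\bar J^+(p)=\bar I^+(p)$ (Proposition~\ref{basicprops}(vi)) together with the openness of $I^\pm$ (Proposition~\ref{basicprops}(i)) to pick points $r_k\to q$ on a past-directed approach along a timelike curve from $I^+(q)$, each satisfying $p\ll r_k$ and $r_k\le r_1$; then $r_k\in J^+(p)\cap J^-(r_1)$, which is compact and hence \emph{closed}, so $q\in J^+(p)$. This squeeze uses only the compactness of causal diamonds and elementary push-up/transitivity, with no limit curves at all. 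Your proof instead runs the classical limit-curve argument of \cite{BeEhEa96}: trapping the connecting curves in $J^+(p)\cap J^-(q')$, bounding their $h$-arclengths via non-imprisonment, and extracting a causal limit via Arzel\`a--Ascoli and Propositions~\ref{existslimitcurve} and~\ref{generallimitcurve}. What your route buys is an explicit causal curve from $p$ to $q$ (useful elsewhere, e.g.\ for maximizing geodesics), but it costs you one lemma the paper never states and you would have to prove: that under strong causality no future-inextendible causal curve can be imprisoned in a compact set. Your parenthetical sketch of this is the right idea but incomplete --- you must also argue that an inextendible causal curve necessarily \emph{leaves} each member of a suitable finite cover (e.g.\ each relatively compact convex neighborhood, using the local time function of Proposition~\ref{t} or the convexity from Proposition~\ref{convexneigh}), since otherwise ``leave and return infinitely often'' does not yet follow from mere containment in the compact set. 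With that lemma supplied, and with the minor observation that only the initial segments of the extended curves (those that are limits of arcs lying in $K$) are claimed to lie in $K$, your proof goes through.
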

\begin{proof}
It follows the same steps as \cite[Prop. 3.71]{MinSan} using properties $(i)$ and $(vi)$ of Proposition \ref{basicprops}.
 \end{proof}
Recall that a subset $S$ is said to be acausal if no pair of points $p,q\in S$ are causally related.
\begin{proposition}\label{inextengeo}
Let $(M,\C)$ be a cone structure. Then  a  closed acausal topological hypersurface $S$ is Cauchy if and only if all inextendible cone geodesics cross it.
\end{proposition}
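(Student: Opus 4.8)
The statement is an equivalence, and one implication is immediate. A cone geodesic is by Definition~\ref{conegeos} a causal curve, so an inextendible cone geodesic is in particular an inextendible causal curve; hence if $S$ is Cauchy---so that every inextendible causal curve meets and, by acausality, crosses $S$ exactly once---then a fortiori every inextendible cone geodesic crosses $S$. The content of the proposition is the converse, which I would prove by contraposition: assuming $S$ is \emph{not} Cauchy, I would produce an inextendible cone geodesic disjoint from $S$.

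The plan is to work with the Cauchy development $D(S)=D^+(S)\cup D^-(S)$ and its Cauchy horizon $H(S)=H^+(S)\cup H^-(S)$. Since $S$ is a closed acausal (hence edgeless) topological hypersurface, $S$ is Cauchy if and only if $D(S)=M$, equivalently $H(S)=\emptyset$; these facts transfer from the classical Lorentzian theory using convex neighborhoods (Proposition~\ref{convexneigh}) and the limit curve results (Propositions~\ref{existslimitcurve} and~\ref{generallimitcurve}) in the standard way. Thus $S$ not Cauchy forces $H(S)\neq\emptyset$, and without loss of generality $H^+(S)\neq\emptyset$.

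The key step is the generator structure of the horizon: I would show that through each $p\in H^+(S)$ there passes a past-inextendible cone geodesic contained in $H^+(S)$. Concretely, $H^+(S)$ is an achronal topological hypersurface whose generators are lightlike and locally horismotic; invoking the auxiliary Lorentz-Finsler metric of Proposition~\ref{existsFinsler} together with Proposition~\ref{coneLgeos} (cone geodesics are exactly the lightlike pregeodesics of any such $L$), these generators are genuine cone geodesics, and edgelessness of $S$ forces them to have no past endpoint and to remain within $H^+(S)$. The generator is naturally obtained as a limit curve of the causal curves witnessing the failure of $D^+(S)=M$, so Proposition~\ref{generallimitcurve} guarantees that it is itself a past-inextendible cone geodesic. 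Since edgelessness places $S$ in the interior of $D(S)$, we have $H^+(S)\cap S=\emptyset$, so the past portion of the generator misses $S$; and because $H^+(S)\subseteq\overline{D^+(S)}\subseteq\overline{J^+(S)}=\overline{I^+(S)}$ (the set analogue of Proposition~\ref{basicprops}(vi)), a short point-chasing argument using acausality of $S$ shows that the maximal future extension cannot return to cross $S$ either. Extending the generator to a maximal, hence inextendible, cone geodesic therefore yields an inextendible cone geodesic disjoint from $S$, contradicting the hypothesis.

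The main obstacle is precisely this horizon-generator step: one must transport to the cone-structure setting the classical facts that an achronal boundary is ruled by lightlike geodesics, that these generators are past-inextendible when $\mathrm{edge}(S)=\emptyset$, and that the generator stays clear of $S$ under future extension. The limit curve machinery (Propositions~\ref{existslimitcurve} and~\ref{generallimitcurve}) together with the identification of cone geodesics as lightlike pregeodesics (Proposition~\ref{coneLgeos}) does the heavy lifting for the first two points, while acausality of $S$ and part~(vi) of Proposition~\ref{basicprops} are what forbid the future portion of the generator from meeting the acausal surface; verifying the latter carefully, past the closure subtleties of $\overline{D^+(S)}$, is the most delicate part of the argument.
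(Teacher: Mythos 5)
Your proposal is correct and follows essentially the same route as the paper, whose proof is precisely the classical argument of \cite[Cor.~14.54]{o1983} transplanted via Cauchy developments and Cauchy horizons ``in the obvious way''; your fleshed-out version --- horizon generators as past-inextendible cone geodesics (via Propositions~\ref{coneLgeos}, \ref{existslimitcurve} and \ref{generallimitcurve}), edgelessness, and acausality of $S$ plus Proposition~\ref{basicprops}(vi) to keep the future extension off $S$ --- is exactly the intended adaptation.
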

\begin{proof}
 It follows the same steps as \cite[Cor. 14.54]{o1983} introducing the notions of Cauchy development and future Cauchy horizons in the obvious way.
\end{proof}
\section{Wind Finslerian structures}\label{wfm}
Let us recall that the Zermelo's navigation problem poses the question of finding the time-minimizing trajectories of a ship  in the presence of a wind or a current. Namely, assume  initially that  the velocity of the ship is the same in all directions, and then  introduce a mild wind. Then  the new  velocities vectors  are given by a (non-centered) sphere which  contains the origin in the interior of its bounded region. It can be shown that the solutions to Zermelo's problem in this case can be obtained as geodesics of the Finsler metric having as indicatrix the non-centered sphere of  velocities (see \cite{BRS04}), which in turn, it is a Finsler metric of Randers type, namely, it can be expressed as
\[F(v)=\sqrt{h(v,v)}+\omega(v),\]
where $h$ is a Riemannian metric  and $\omega$ is a one-form with $h$-norm less than $1$ at every point. 
The definition of a wind Finslerian structure is motivated by the Zermelo's problem when the wind can be stronger  than the  velocity of the ship and its set of maximal possible velocities (without wind)  is   any compact,  strongly convex hypersurface containing the zero vector in the interior of its bounded region. The set of velocities in the presence of the wind is a translation of this hypersurface and it may not contain the zero vector in its bounded interior region. As a consequence this subset is not the indicatrix of  a Finsler metric.  
\begin{definition}\label{windFinsler}
Given a manifold $M$, we say that a smooth hypersurface $\Sigma$ of $TM$ is a {\em wind Finslerian structure} if
\begin{enumerate}[(i)]
\item $\Sigma_p:=\Sigma\cap T_pM$ is a compact, connected, strongly convex, embedded hypersurface in $T_pM$,
\item $\Sigma$ is transversal to the fibers of the tangent bundle, namely, if $v\in \Sigma_p$, then 
$T_v(T_pM)+T_v\Sigma=T_v(TM)$.
\end{enumerate}
\end{definition}
Observe that in this case we will use  the following notation:
\begin{enumerate}[(i)]
\item the bounded open region determined by $\Sigma_p$ in $T_pM$ is the unit ball of $\Sigma_p$ and it is denoted by $B_p$,
\item the domain $A_p$ of $\Sigma_p$ is defined as the open set of vectors $v\in T_pM$ such that there exists $\lambda>0$ such that $\lambda v\in B_p$.
\end{enumerate}
Moreover, we have to distinguish three types of points in a wind Finslerian structure (see \cite[Def. 2.11 and Prop. 2.12]{CJS14} for details).
\begin{definition}\label{windregions}
Let $(M,\Sigma)$ be a wind Finslerian structure. Then we say that $p\in M$ is a point of
\begin{enumerate}[(i)]
\item  {\em mild wind} if $0\in B_p$. In this case, $\Sigma_p$ is the indicatrix of a Minkowski norm $F$. The open subset of mild wind points is called the mild region and denoted $M_{mild}$,
\item  {\em critical wind} if $0\in \Sigma_p$. In this case, $\Sigma_p$ is the indicatrix of a Kropina type norm $F$. The closed subset of 
critical wind points is called the critical region and denoted $M_{crit}$,
\item  {\em strong wind} if $0\not\in \bar B_p$. In this case, $\Sigma_p\cap A_p$ has two connected components, one of them is the indicatrix of a conic Minkowski norm $F$, and the other one of a Lorentz-Minkowski norm $F_l$. The subset of strong wind points is called the strong region and denoted $M_l$.
\end{enumerate} 
\end{definition}
Moreover, denoting
\[A=\cup_{p\in M}A_p,\quad A_l=\cup_{p\in M_l}A_p\]
\[\text{ and } A_E=\{\text{the closure of $A_l$ in $TM_l\setminus \bf 0$} \}\cup \{ 0_p\in T_pM: p\in M_{crit}\},\]
 there is a conic Finsler metric $F:A\rightarrow(0,+\infty)$ and a Lorentz-Finsler metric $F_l:A_l\rightarrow (0,+\infty)$. Observe that
these metrics can be extended by continuity respectively to $A\cup A_E$ and  $A_E$, with the zero section as a special case (as commented in \cite[Con. 2.19]{CJS14} it is not possible to extend these metrics continuously to the zero section in the critical region, but we will choose $F(0_p)=F_l(0_p)=1$ for all $p\in M_{crit}$ by convenience).

Now we can give some definitions for curves.
\begin{definition}
Given a wind Finslerian structure $(M,\Sigma)$, with $F,F_l$ its associated pseudo-Finsler metrics, we say that a smooth curve $\gamma:[a,b]\rightarrow M$ is 
\begin{enumerate}[(i)]
\item {\em $\Sigma$-admissible} if $\dot\gamma(t)\in A\cup A_E$ for all $t\in [a,b]$,
\item {\em a wind curve} if it is $\Sigma$-admissible and 
\[F(\dot\gamma(t))\leq 1\leq F_l(\dot\gamma(t)),\quad \forall t\in [a,b],\]
\item {\em a (strictly) regular wind curve} if, in addition, the derivatives vanish at most at isolated points (do not vanish).
\end{enumerate}
These definitions can be extended to piecewise smooth curves by requiring that every smooth piece satisfies the above properties.
Moreover, the wind lengths of $\Sigma$-admissible curves are defined as
\[\ell_F(\gamma)=\int_a^b F(\dot\gamma(t)dt,\quad  \ell_{F_l}(\gamma)=\int_a^b F_l(\dot\gamma(t)dt.\]
\end{definition}
	\begin{definition}\label{sigmaballs}
		Let $x_0\in M$ and $r >  0$. The {\em forward} (resp. {\em backward}) {\em wind balls}  of center $x_0$ and radius $r$ 
		associated with the wind Finslerian structure $\Sigma$ are:
		\begin{align*}
			&B^+_{\Sigma}(x_0,r)=\{x\in M:\ \exists\   \gamma\in C^{\Sigma}_{x_0, x}, \text{ s.t. }   r=b_\gamma-a_\gamma \, \text{and} \;  \ell_F(\gamma)<r<\ell_{F_l}(\gamma)\},\\
			&B^-_{\Sigma}(x_0,r)=\{x\in M:\ \exists\   \gamma\in C^{\Sigma}_{x, x_0}, \text{ s.t. }  r=b_\gamma-a_\gamma \, \text{and} \;  \ell_F(\gamma)<r<\ell_{F_l}(\gamma)\},\\
			\intertext{where $C^{\Sigma}_{x_0, x}$ is the space of wind curves $\gamma:[a_\gamma,b_\gamma]\rightarrow M$ from $x_0$ to $x$. Moreover, we denote by     $\bar{B}^\pm_{\Sigma}(x_0,r)$, {\em the closed balls} (the closures of the above balls) and define the (forward, backward) {\em c-balls} as:} 
			&\hat{B}^+_{\Sigma}(x_0,r)=\{x\in M:\ \exists\  \gamma\in C^{\Sigma}_{x_0, x},\text{ s.t. } r=b_\gamma-a_\gamma \, \text{(so,} \;  \ell_F(\gamma)\leq r\leq\ell_{F_l}(\gamma) )  \}
			,\\
			&\hat{B}^-_{\Sigma}(x_0,r)=\{x\in M:\ \exists\ \gamma\in C^{\Sigma}_{x, x_0},\text{ s.t. } r=b_\gamma-a_\gamma \, \text{(so,} \; \ell_F(\gamma)\leq r\leq\ell_{F_l}(\gamma))  \} 
		\end{align*}
		for  $r> 0$ and, by convention for $r=0$, $\hat{B}^\pm_{\Sigma}(x_0,0)=x_0$.
	\end{definition}
	Finally, we can introduce the concept of geodesic adapted to wind Finslerian structures.
	\begin{definition}\label{extremizing}
		Let $(M,\Sigma)$ be a wind Finslerian structure.  A   wind  curve $\gamma: [a,b]\to M$,  $a<b$,   is called a
		{\em unit extremizing geodesic} if
		\begin{equation}\label{eunitgeodesic}
			\gamma(b)\in \hat{B}_\Sigma^+(\gamma(a),b-a)\setminus B_\Sigma^+(\gamma(a),b-a) .  \end{equation}
		reparametrization  of a unit extremizing geodesic. 
	\end{definition}
	\section{Cone wind triples}\label{windtriples}
	Let us observe that the notion of wind Finslerian structures in Def. \ref{windFinsler} can be extended to arbitrary vector bundles.  Indeed, it can be defined as a smooth hypersurface $\Sigma$ satisfying that its intersection with every fiber is a compact, connected, strongly compact, embedded hypersurface in the fiber, generalizing $(i)$ in Def. \ref{windFinsler}, and an adaptation of $(ii)$ by requiring that $\Sigma$ is transversal to the tangent space to the fiber. 
	\begin{proposition}\label{conewind}
	Given a cone structure $(M,\C)$, a non-vanishing one-form $\Omega$ with $\ker(\Omega)$ spacelike and an arbitrary vector field $T$ transversal to $\ker(\Omega)$, we can define a wind Finslerian structure in the vector subbundle $\ker(\Omega)$, by defining $\Sigma$ as the vectors $w\in \ker(\Omega)$ such that  $v=T+w\in \C$.
	\end{proposition}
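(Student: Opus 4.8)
The plan is to verify, fiberwise and then globally, the two conditions defining a (vector-bundle) wind Finslerian structure recalled just above: each $\Sigma_p$ must be a compact, connected, strongly convex, embedded hypersurface of $\ker\Omega_p$, and $\Sigma$ must be a smooth hypersurface of the total space of $\ker\Omega$ transverse to its fibers (throughout $\dim M\geq 3$, as needed for strong cones). I would first reformulate $\Sigma_p$ as a hyperplane section of the cone: setting $c_p:=\Omega(T_p)\neq 0$ (transversality of $T$), the affine hyperplane $\mathcal A_p:=T_p+\ker\Omega_p=\{v\in T_pM:\Omega(v)=c_p\}$ avoids the origin, and $w\mapsto T_p+w$ is an affine isomorphism $\ker\Omega_p\to\mathcal A_p$ carrying $\Sigma_p$ onto $\C_p\cap\mathcal A_p$. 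Because $\ker\Omega$ is spacelike, $\ker\Omega_p\cap\overline{A_p}=\{0\}$, so $\Omega$ has constant sign on the connected set $\overline{A_p}\setminus\{0\}$; reading ``arbitrary $T$'' as \emph{future-transverse} (not necessarily timelike) means this sign agrees with that of $c_p$, which is exactly what makes $\C_p\cap\mathcal A_p$ a section of the future cone, hence nonempty.

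For the fiberwise geometry I would compare $\mathcal A_p$ with the reference hyperplane $\Pi_0$ furnished by the definition of a strong cone (\S\ref{conestruct}), whose section $S_0:=\C_p\cap\Pi_0$ is a compact, strongly convex, embedded sphere. Since $\C_p$ is conic and $\Omega$ has a fixed sign along each generator, every generator meets $\mathcal A_p$ exactly once, so the radial map $s\mapsto (c_p/\Omega(s))\,s$ is a diffeomorphism $S_0\to\C_p\cap\mathcal A_p$; this immediately yields compactness, connectedness (a sphere of dimension $n-2\geq 1$), and the embedded-sphere topology. For strong convexity I would invoke Remark~\ref{equivalence}: strong convexity in the non-radial directions is an intrinsic feature of $\C_p$ and, together with the transversal-independence of definiteness noted in the text, it descends to every affine section transverse to the generators, in particular to $\C_p\cap\mathcal A_p$. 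Translating back by $-T_p$, an affine isomorphism preserving all of these properties, establishes condition (i) for $\Sigma_p$.

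For smoothness and fiber-transversality I would use the fiberwise affine embedding $F:\ker\Omega\to TM$, $F(w)=T_p+w$ for $w\in\ker\Omega_p$, for which $\Sigma=F^{-1}(\C)$. To see $F\pitchfork\C$, note that $dF_w$ sends the vertical space onto $\ker\Omega_p\subset T_v(T_pM)$ with $v=F(w)\in\C_p$; by conicity the radial vector $v$ lies in $T_v\C_p=T_v(T_pM)\cap T_v\C$, while $v\notin\ker\Omega_p$ because $\Omega(v)=c_p\neq 0$, and since $\dim T_v\C_p=\dim\ker\Omega_p=n-1$ this forces $\ker\Omega_p\not\subset T_v\C$; hence $\ker\Omega_p+T_v\C=T_v(TM)$, so $F\pitchfork\C$ and $\Sigma=F^{-1}(\C)$ is an embedded smooth hypersurface of $\ker\Omega$. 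Transversality of $\Sigma$ to the fibers is equivalent to the projection $\Sigma\to M$ being a submersion, and this I would read off from the diffeomorphism $G:\C\to\Sigma$ obtained by radially projecting $\C_p$ onto $\mathcal A_p$ and identifying $\mathcal A_p$ with $\ker\Omega_p$, i.e. $G(v)=\pi\big((c_p/\Omega(v))\,v\big)$ with $\pi$ the projection of $TM$ onto $\ker\Omega$ along $T$. Since $G$ covers $\mathrm{id}_M$, the projection $\Sigma\to M$ is a submersion precisely because $\C\to M$ is one, the latter being the transversality condition in Definition~\ref{d_conestructure}(a).

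The step I expect to be the main obstacle is the fiberwise one: the definition of strong cone only asserts a strongly convex section for \emph{one} distinguished hyperplane, so the real content is that the specific, non-canonical hyperplane $\mathcal A_p$ determined by $(\Omega,T)$ also cuts $\C_p$ in a compact, strongly convex set. Compactness and the sphere topology come cheaply from radial projection, but strong convexity genuinely needs the section-independent formulation of Remark~\ref{equivalence}; the only other delicate point is the sign bookkeeping guaranteeing $\Sigma_p\neq\emptyset$, that is, that $\mathcal A_p$ meets the future cone rather than the past one.
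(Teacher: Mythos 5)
Your argument is correct in substance and reaches the same conclusion as the paper, but by a noticeably different route. The paper's own proof is much shorter: for the fiberwise claim it simply cites \cite[Prop.~2.6~(iii)]{JS18} together with the observation that the spacelike hyperplane $\ker\Omega_p$ is transverse to $\C_p$; for the transversality of $\Sigma$ to the fibers of $\ker\Omega$ it runs a direct dimension-count by contradiction, using that $T_v\C=\mathrm{span}\{v\}+T_w\Sigma$ and that $\C$ is transverse to $T_pM$ by Definition~\ref{d_conestructure}(a). You instead (1) reconstruct the fiberwise statement by radially projecting the distinguished section $S_0$ onto $\C_p\cap(T_p+\ker\Omega_p)$ and importing strong convexity from the section-independent formulation in Remark~\ref{equivalence}, and (2) obtain smoothness and fiber-transversality of $\Sigma$ by writing $\Sigma=F^{-1}(\C)$ with $F\pitchfork\C$ and then using the submersion criterion for transversality to fibers. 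Your version buys self-containedness and, usefully, makes explicit two points the paper glosses over: the sign bookkeeping ensuring $\Sigma_p\neq\emptyset$ (the statement's ``arbitrary $T$'' must indeed be read as $\Omega(T)$ having the sign of $\Omega$ on the future cone, otherwise $T_p+\ker\Omega_p$ cuts the past side and $\Sigma_p$ is empty), and the reason $F\pitchfork\C$ holds (the radial vector $v\in T_v\C_p\setminus\ker\Omega_p$). The paper's version buys brevity and avoids re-deriving facts already established in \cite{JS18}.

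One small correction: the map $G:\C\to\Sigma$, $G(v)=\pi\bigl((c_p/\Omega(v))v\bigr)$, is \emph{not} a diffeomorphism --- it collapses each generator of $\C_p$ to a single point, and indeed $\dim\C=2n-1$ while $\dim\Sigma=2n-2$. This does not break your argument: all you use is that $G$ is a smooth surjection with $\pi_\Sigma\circ G=\pi_{\C}$, so surjectivity of $d\pi_{\C}$ (condition (a) of Definition~\ref{d_conestructure}) forces surjectivity of $d\pi_\Sigma$ at every point of $\Sigma$. But the word ``diffeomorphism'' should be replaced by ``smooth surjection covering $\mathrm{id}_M$'', and only the implication from $\C$ to $\Sigma$ (not an equivalence) is obtained this way.
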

	\begin{proof}
	The fact that $\Sigma\cap \ker(\Omega_p)$ is a compact, strongly convex, connected, embedded hypersurface of $\ker(\Omega_p)$ is a consequence of the properties of cone structures (see \cite[Prop. 2.6 (iii)]{JS18} and take into account that $\ker(\Omega_p)$ is transversal to $\C_p$ because it is spacelike). Finally, to prove that $\Sigma$ is transversal to $\ker(\Omega_p)$, observe that by definition of cone structure $\C$ is transversal to $T_pM$ and then if $w\in \Sigma$, we have by definition that $v=w+T_p\in \C$ and  $T_vTM=T_v\C+T_v(T_pM)$. If $\Sigma$ were not transversal to $\ker(\Omega_p)$, we would have that
\[T_w\Sigma+T_w(\ker(\Omega_p))\subsetneq T_w (\ker(\Omega)),\]	
and taking into account that $T_v\C={\rm span}\{v\}+T_w\Sigma$, it follows that
\begin{multline*}
	T_v\C+T_v(T_pM)={\rm span}\{v\}+T_w\Sigma+T_w(\ker(\Omega_p))\\ \subsetneq {\rm span}\{v\}+T_w(\ker(\Omega))=T_v(TM),\end{multline*}
	and then $\C$ wouldn't be transversal to $T_pM$, a contradiction.
	\end{proof}
\begin{definition}\label{conewindtriple}
We call a triple $(\Omega, T, \Sigma)$ associated with a cone structure $(M,\C)$, with $\Omega$, $T$ and $\Sigma$ as in the statement of Proposition~\ref{conewind}, a {\em cone wind triple} associated with $(M,\C)$.
\end{definition}
\section{Cone Killing fields of cone structures and \cstk splittings}\label{cstk}
We introduce the concept of a cone Killing vector field which naturally extends to cone structures the notion of a conformal vector field in a spacetime. 
\begin{definition}\label{killing}
We say that a vector field $K$ of $M$ is {\em cone Killing} for a certain cone structure $\mathcal{C}$ on $M$ if its flow locally preserves $\C$, namely whenever the flow is defined for a certain instant $t$, $\phi_t:U\rightarrow M$, the differential of $\phi_t$ maps $\mathcal{C}_p$ into $\mathcal{C}_{\phi_t(p)}$.
\end{definition}
If a cone structure $(M,\C)$ admits a complete cone Killing vector field $K$ and a spacelike hypersurface $S$ (in the sense that its tangent bundle is made by spacelike subspaces according to the definitions given at the beginning of \S~\ref{causalityforcones}) transversal to $K$, then a  cone structure is naturally defined on $\R\times S$ as follows:
\begin{proposition}\label{windFinslercone}
Let $(M,\C)$ be a cone structure, $K$ a complete cone Killing field for $\C$ and $S$  a spacelike hypersurface which is transversal to $K$ and intersects only once every orbit of $K$ (a spacelike section). Then $M$ is diffeomorphic to $\R\times S$ and this diffeomorphism induces a cone structure $\C^S$ on $\R\times S$. Moreover,  there is a wind Finslerian structure  $(S,\Sigma)$ that when considered in every $\{t_0\}\times S$ generates the cone wind triple obtained in Proposition~\ref{conewind} associated with $(\R\times S,\C^S)$ with $T=K$, $\ker(\Omega)=TS$ and $\Omega(K)=1$.
\end{proposition}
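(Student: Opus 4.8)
The plan is to build the diffeomorphism from the flow of $K$, transport the cone structure, and then invoke Proposition~\ref{conewind} to extract the wind Finslerian structure. First I would use the flow $\phi\colon\R\times M\to M$ of the complete cone Killing field $K$ together with the spacelike section $S$. Since $S$ meets every orbit of $K$ exactly once and is transversal to $K$, the map $\Psi\colon\R\times S\to M$, $\Psi(t,x)=\phi_t(x)$, is a smooth bijection. Transversality of $S$ to $K$ guarantees that $d\Psi$ is an isomorphism along $\{0\}\times S$, and equivariance of $\phi$ under its own flow propagates this to all $t$; hence $\Psi$ is a diffeomorphism with $\Psi_*(\partial_t)=K$. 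I would then set $\C^S:=(\Psi^{-1})_*\,\C$, i.e.\ pull back the cone structure along $\Psi$; because $\Psi$ is a diffeomorphism, $\C^S$ is again an embedded hypersurface of $T(\R\times S)$ satisfying the fiber-transversality condition (a) and the strong-cone condition (b) of Definition~\ref{d_conestructure} fiberwise, so $(\R\times S,\C^S)$ is a cone structure.

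The next step is to produce the data $(\Omega,T,\Sigma)$ of a cone wind triple on $\R\times S$. I would take $T=K$ (which in the product coordinates is $\partial_t$), let $\Omega=\df t$ be the natural one-form with $\ker(\Omega)=TS$, and observe $\Omega(K)=\Omega(\partial_t)=1$. The kernel $TS$ is spacelike by hypothesis on $S$, and $K$ is transversal to $\ker(\Omega)$ since $\Omega(K)=1\neq 0$. Thus the triple $(\Omega,K,\cdot)$ meets exactly the hypotheses of Proposition~\ref{conewind}, and I would define $\Sigma$ by that proposition: $\Sigma$ consists of the vectors $w\in\ker(\Omega)=TS$ with $K+w\in\C^S$. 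Proposition~\ref{conewind} then immediately certifies that $(S,\Sigma)$ is a wind Finslerian structure and that $(\Omega,K,\Sigma)$ is the associated cone wind triple (Definition~\ref{conewindtriple}).

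It remains to check that this construction is genuinely $t$-independent, so that $\Sigma$ may be regarded as living on $S$ and, when placed in each slice $\{t_0\}\times S$, reproduces the same wind Finslerian structure. This is where the \emph{cone Killing} property does the essential work: for each fixed $t_0$, the flow $\phi_{t_0}$ carries $\C_p$ onto $\C_{\phi_{t_0}(p)}$, which in the coordinates given by $\Psi$ is exactly the statement that translation $(t,x)\mapsto(t+t_0,x)$ in the $\R$-factor preserves $\C^S$. Consequently $\C^S$ is invariant under $\partial_t$-translations, and since $K=\partial_t$ and $\Omega=\df t$ are manifestly translation-invariant, the fiber $\Sigma_{(t_0,x)}=\{w\in T_xS: K+w\in\C^S_{(t_0,x)}\}$ does not depend on $t_0$. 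Hence $\Sigma$ is determined by a hypersurface of $TS$, and restricting $\C^S$ to any slice $\{t_0\}\times S$ recovers the same cone wind triple with $T=K$, $\ker(\Omega)=TS$ and $\Omega(K)=1$, as claimed.

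I expect the main obstacle to be the rigorous verification that $\Psi$ is a global diffeomorphism rather than merely a local one: injectivity follows from $S$ meeting each orbit once, but surjectivity and the diffeomorphism property rely on completeness of $K$ and on a careful transversality argument to rule out degeneration of $d\Psi$ away from $t=0$. Everything downstream---the induced cone structure, the triple, and the $t$-independence of $\Sigma$---then reduces to unwinding definitions and applying Proposition~\ref{conewind} together with the defining invariance in Definition~\ref{killing}.
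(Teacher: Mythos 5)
Your proposal is correct and follows essentially the same route as the paper: the flow of $K$ gives the diffeomorphism $\R\times S\cong M$, the one-form with kernel $TS$ and $\Omega(K)=1$ together with $T=K$ feeds into Proposition~\ref{conewind}, and the cone Killing property yields the $t$-invariance of the resulting $\Sigma$. Your write-up merely spells out in more detail the steps (global bijectivity of the flow map and translation-invariance of $\C^S$) that the paper's proof leaves implicit.
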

\begin{proof}
The first claim about the diffeomorphism between $M$ and $\R\times S$ is obtained by using the flow of $K$. To obtain the cone wind triple, consider the one-form $\Omega$ determined by having  the tangent space to $S$ as kernel and satisfying that $\Omega(K)=1$. Choosing in addition $T=K$,  it follows that the cone wind triple obtained in Proposition~\ref{conewind} has as third element  in the triple a wind Finslerian structure  on $S$ which is preserved  by the flow of $K$.
\end{proof}

The product manifold with the cone structure $(\R\times S,\C^S)$ obtained in the last proposition will be referred to in the following as a {\em cone structure with a space-transverse Killing field splitting} (a \cstk splitting in short). We will see that the causal properties of this cone structure can be described in terms of the cone wind triple. Summing up, we will say that a cone structure  $(\R\times S,\C)$ is a \cstk splitting if $\partial_t$ (being $t$ the first coordinate of $\R\times S$) is a cone Killing field and the slices $t={\rm const}$ are spacelike. 
Let us see that $t$ is a time function of $(\R\times S,\C)$. 
\begin{proposition}\label{t}
The function $t:\R\times S\to \R$ in a \cstk splitting $(\R\times S,\C)$ is a (smooth) time function.
\end{proposition}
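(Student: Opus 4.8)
The plan is to reduce the statement to a pointwise positivity claim about the one-form $\Omega:=\de t$, which is precisely the one-form of the cone wind triple of Proposition~\ref{windFinslercone}: it is characterized by $\ker\Omega=TS$ and $\Omega(\partial_t)=1$. Since $t$ is a coordinate projection it is manifestly smooth, hence continuous, so the only thing to establish is that $t$ is strictly increasing along every future causal curve. If $\gamma:[a,b]\to\R\times S$ is a piecewise smooth future causal curve, then on each smooth piece $\frac{\de}{\de s}(t\circ\gamma)(s)=\de t(\dot\gamma(s))=\Omega(\dot\gamma(s))$, so it suffices to prove that $\Omega(v)>0$ for every future causal vector $v\in\C\cup A$.

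First I would fix $p\in\R\times S$ and study the sign of $\Omega$ on the set of future causal vectors $\C_p\cup A_p$. Since $\C_p$ is conic and $A_p$ is the conic, open, convex inner domain it encloses, this set equals $\overline{A_p}\setminus\{0\}$, the closed convex cone $\overline{A_p}$ with its apex removed. As $A_p$ is convex (hence connected) and dense in $\overline{A_p}$, and $A_p\subseteq \overline{A_p}\setminus\{0\}\subseteq\overline{A_p}$, the set $\C_p\cup A_p$ is connected. Moreover $\Omega$ cannot vanish on any future causal vector: $\Omega(v)=0$ would force $v\in\ker\Omega_p=T_pS$, but $T_pS$ is spacelike and therefore contains no nonzero causal vector. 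By continuity of $\Omega$ together with the connectedness of $\C_p\cup A_p$, the sign of $\Omega$ is thus constant on $\C_p\cup A_p$.

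It remains to pin that sign down as positive, and this is where the standing hypotheses enter. By Proposition~\ref{windFinslercone} the slice carries a genuine (nonempty) wind Finslerian structure $\Sigma$, so there exists $w\in T_pS=\ker\Omega_p$ with $v:=\partial_t|_p+w\in\C_p$. This $v$ is future causal and satisfies $\Omega(v)=\Omega(\partial_t|_p)+\Omega(w)=1+0=1>0$. Combined with the constant-sign conclusion, this forces $\Omega>0$ on all of $\C_p\cup A_p$, and since $p$ is arbitrary we get $\Omega(\dot\gamma)>0$ along every future causal curve. Integrating over each smooth piece and using continuity at the breakpoints shows $t\circ\gamma$ is strictly increasing; the same conclusion for continuous causal curves follows from the local description via the convex neighborhoods of Proposition~\ref{convexneigh}. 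Hence $t$ is a smooth time function.

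The hard part will be the sign determination. The nonvanishing of $\Omega$ on causal vectors is immediate from the spacelike kernel, but the fact that the future cone lies on the positive side of $\Omega$, rather than the negative one, is exactly the nontrivial consistency encoded in the existence of the wind Finslerian structure with $\Omega(\partial_t)=1$; equivalently, it is the statement that the affine hyperplane $\{\Omega=1\}=\partial_t|_p+\ker\Omega_p$ genuinely meets the future cone $\C_p$ and not the past cone $-\C_p$.
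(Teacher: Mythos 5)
Your proof is correct and takes essentially the same route as the paper's: reduce to showing $\de t=\Omega$ is positive on future causal vectors by writing $v=a\partial_t+w$ with $w\in TS$, and then pass from smooth to continuous causal curves via the convex neighborhoods of Proposition~\ref{convexneigh}. The only difference is that you spell out why $a>0$ (nonvanishing of $\Omega$ on causal vectors since $TS$ is spacelike, connectedness of $\C_p\cup A_p$, and the normalization $\Omega(\partial_t)=1$ witnessed by a point of $\Sigma_p$), a step the paper compresses into a terse appeal to Proposition~\ref{conewind}.
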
	
\begin{proof}
Let us first show that $t$ is strictly  increasing along any smooth future causal curve $\gamma:I\to \R\times S$. If not, there must exist at least one instant $s\in I$ such that $\de t(\dot\gamma(s))\leq 0$. Let $w\in TS$ and $a\in \R$ such that $\dot \gamma(s)=a\partial_t|_{\gamma(s)}+w$ so that  $\de t(\dot\gamma(s))=a$. According to Proposition~\ref{conewind}, we have that $a>0$, a contradiction. Let now $\gamma$ be a continuous future causal curve. In every small enough interval $I_\varepsilon$, the image of $\gamma|_{I_\varepsilon}$ is contained in a convex neighborhood and $s,t\in I_\varepsilon$, with $s<t$, implies that there exists a smooth causal curve from $\gamma(s)$ to $\gamma(t)$. Applying the first part we deduce that $t(\gamma(s))<t(\gamma(t))$, which implies that $t\circ \gamma$ is locally strictly increasing, and then strictly increasing.
\end{proof}	
Finally, we will give a very direct proof of the strongly causal property of a \cstk splitting.
\begin{proposition}\label{strongcaus}
A \cstk splitting $(\R\times S,\C)$ is  strongly causal.
\end{proposition}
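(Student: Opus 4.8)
The plan is to give a self-contained argument that bypasses the general equivalence ``time function $\Rightarrow$ stable causality $\Rightarrow$ strong causality'' and instead exploits directly the product structure $\R\times S$ together with Proposition~\ref{t}. The starting observation is that, by Proposition~\ref{t}, $t$ is strictly monotone along any causal curve, so every future causal curve can be reparametrized by $t$: writing $\gamma(s)=(s,c(s))$, the tangent is $\dot\gamma=\partial_t+\dot c$ with $\dot c(s)\in T_{c(s)}S$. The point of the whole argument is that the \emph{spatial speed} $\dot c$ is controlled by the $t$-elapsed. Since the definition of strong causality in Definition~\ref{causalprop}(v) is purely local, I would fix $p=(t_0,x_0)$, choose an arbitrary auxiliary complete Riemannian metric $h$ on $S$, and work inside a single product chart.

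The key quantitative step is a local bound on $|\dot c|_h$. A future causal vector $v$ has $\Omega(v)=\de t(v)>0$ by Proposition~\ref{conewind}, so normalizing $v/\Omega(v)=\partial_t+w$ with $w\in T_{c}S$, the vector $\partial_t+w$ is future causal, i.e.\ $w$ lies in the closed wind region $\overline{B_{c}}$ determined by the wind Finslerian structure of Proposition~\ref{windFinslercone}. Because each $\Sigma_p$ is compact (Definition~\ref{windFinsler}), this region is bounded, and it varies continuously in $p$; hence on a compact neighborhood of $x_0$ there is a constant $C>0$ with $|w|_h\le C$ for every such $w$. Parametrizing a future causal curve by $t$ therefore gives $|\dot c(s)|_h\le C$, whence the $h$-length bound
\[
 d_h\big(c(s_1),c(s_2)\big)\le \int_{s_1}^{s_2}|\dot c(s)|_h\,\de s\le C\,(s_2-s_1),
\]
valid as long as $c$ stays in the region where $C$ is the bound.

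With this bound in hand, I would complete the argument as follows. Given a neighborhood $U$ of $p$, choose $\delta>0$ and $\rho>0$ with $2C\delta<\rho$ such that $(t_0-2\delta,t_0+2\delta)\times B_h(x_0,3\rho)\subset U$ and such that the constant $C$ is valid on $B_h(x_0,3\rho)$; set $V=(t_0-\delta,t_0+\delta)\times B_h(x_0,\rho)$. Suppose a future causal $\gamma$ has endpoints in $V$ but leaves $U$ at some intermediate parameter $s^\ast$. Reparametrizing by $t$, strict monotonicity forces the $t$-coordinate to remain in $(t_0-\delta,t_0+\delta)$ throughout, so $\gamma(s^\ast)$ does not exit $U$ through the $t$-direction, and the $t$-elapsed from the initial endpoint is $<2\delta$. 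The length bound then yields $d_h\big(x_0,c(s^\ast)\big)<\rho+2C\delta<2\rho$, so $\gamma(s^\ast)\in (t_0-2\delta,t_0+2\delta)\times B_h(x_0,3\rho)\subset U$, a contradiction. Past causal curves are handled identically using that $t$ is strictly decreasing along them.

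I expect the only delicate point to be the \textbf{bootstrap} ensuring that the local bound $|\dot c|_h\le C$ persists along the relevant portion of the curve: one must argue, by a standard continuity/connectedness argument on $s^{\ast\ast}=\sup\{s:c([s_1,s])\subset B_h(x_0,3\rho)\}$, that $c$ never reaches the boundary of $B_h(x_0,3\rho)$, so the bound applies on the whole window. Everything else is bookkeeping; the substantive geometric input is simply the compactness of the indicatrices $\Sigma_p$ from Definition~\ref{windFinsler}, which converts the causal-cone condition into a uniform speed bound.
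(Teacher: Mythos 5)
Your argument is correct and is in substance the same as the paper's: the paper bounds the spatial speed of causal curves on a compact neighbourhood by comparing $\C$ with the wider cones of a flat static Lorentzian metric and then uses the monotonicity of $t$ (Proposition~\ref{t}) to conclude, which is exactly the mechanism you make explicit with the constant $C$ obtained from the compactness of the wind regions $\bar B_x$. The bootstrap on $s^{\ast\ast}$ that you flag is the only bookkeeping the paper leaves implicit, and it goes through as you describe.
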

\begin{proof}
Given a point $p\in \R\times S$ and a neighborhood $U$, consider a compact neighborhood
$\tilde U$ endowed with a flat time-oriented Lorentzian metric $g$ having $\partial_t$ as a static Killing vector field and  with bigger future lightcones than $\C$.   Then choosing $V$ small enough such that the causal curves leaving $\tilde U$ reach a $t$-level bigger than $t^{-1}(V)$, we obtain the strongly causal property.
\end{proof}
\section{Finsler-Kropina type metrics and cone structures with a causal cone Killing field}\label{FinslerKropina}
Let us consider now the case in that the cone structure $(M,\C)$  admits a future causal  cone Killing field and a spacelike section $S$. By Proposition~\ref{windFinslercone} this means that there is a wind Finslerian structure $\Sigma$ in $S$ associated with $(M,\C)$. Since in this case, the vector field associated with the  cone wind triple, $T=K$, is causal, it follows that the zero vector is in the bounded region delimited by $\Sigma_p$, for each $p\in M$. As a consequence,  the region $M_l$ (recall part $(iii)$ of Definition~\ref{windregions}) is empty and the metric $F_l$ is not defined. Therefore, there is only one conic Finsler metric with a strongly convex indicatrix that in some points can contain the origin. 
We will refer to this case as a {\em Kropina type metric}.
As the metric $F$ is a classical Finsler metric in the region of mild wind and of Kropina type in the critical region, we will say that it is a {\em Finsler-Kropina type metric}.  Observe that, in this case, $A_p$ is a half-space when $F$ is of Kropina type or $A_p=T_pS$ when $F$ is a Finsler metric.

We say that a piecewise curve is {\em $F$-admissible} if its velocity belongs to $A$ everywhere and we denote by $\Omega^A_{p, q}$ the space of $F$-admissible curves from $p$ to $q$.  Moreover, it is possible to define an  {\em $F$-separation} as
\begin{equation*}
d_F(p,q)=
\begin{cases}
\inf_{\gamma\in \Omega^A_{p, q}}\ell_F(\gamma)&\text{if $\Omega^A_{p, q}\not=\emptyset$},\\
+\infty &\text{if $\Omega^A_{p, q}=\emptyset$},
\end{cases}
\end{equation*}
and then the forward $B^+_F(p,r)=\{q\in S: d_F(p,q)<r\}$ and backward $B^-_F(p,r)=\{q\in S: d_F(q,p)<r\}$ balls.   We will also 
say that $p$ {\em precedes} $q$, denoted $p\prec q$, if $\Omega^A_{p, q}\not=\emptyset$.
 In the next, we will generalize the results in \cite[\S4]{CJS14}.
\begin{proposition}\label{bolas} 
	 Let $(\R\times S,\C)$ be a \cstk splitting with future causal $K=\partial_t$ and $F$ be  the Finsler-Kropina type metric in $S$ associated with it. Then
		$$(t_0,x_0)\ll (t_1,x_1) \quad  \Leftrightarrow \quad d_F(x_0,x_1)<t_1-t_0 ,$$
		for every $x_0,x_1\in S$ and $t_0,t_1\in\R$. Therefore:
		\begin{equation}\begin{array}{l}
				\label{bolas0} I^+(t_0,x_0)= \{(t,y): d_F(x_0,y)<t-t_0\}, \\
				I^-(t_0,x_0)= \{(t,y): d_F(y,x_0)<t_0-t\}.
		\end{array}\end{equation}
		Equivalently, considering $d_F$-forward and backward balls
		\begin{equation*}I^+(t_0,x_0)= \cup_{s> 0}\{t_0+s\}\times B_F^+(x_0,s), \quad
			I^-(t_0,x_0)= \cup_{s> 0}\{t_0-s\}\times B_F^-(x_0,s).
		\end{equation*}
	\end{proposition}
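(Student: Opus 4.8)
The plan is to reduce the chronological relation on $\R\times S$ to a pointwise condition on velocities written through $F$, and then transfer between causal curves upstairs and $F$-admissible curves on $S$ in the two directions separately. I would begin by recording the velocity dictionary. Writing a velocity as $\dot\gamma=\dot t\,\partial_t+\dot x$ with $\dot x\in TS$, the cone triple relation \eqref{defF} applied to $(\Omega=\de t,\,T=\partial_t,\,F)$ from Proposition~\ref{windFinslercone} says that $\dot\gamma\in\C$ iff $\dot x\in A$ and $\dot t=F(\dot x)$. Since $\partial_t$ is future causal, the region $M_l$ is empty and $F$ is the single associated Finsler--Kropina metric; the section $\{\de t=1\}\cap\C_p$ is the affine translate by $\partial_t$ of the indicatrix of $F$, and $\{\de t=1\}\cap A_p$ is the translate of $\{F<1\}$. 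By positive homogeneity this yields the characterization that $\dot\gamma$ is future timelike iff $\dot x\in A$ and $\dot t>F(\dot x)$, and future causal iff $\dot x\in A$ and $\dot t\ge F(\dot x)$; in particular $\dot t>0$ along any causal curve, consistently with Proposition~\ref{t}.

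For the forward implication, suppose $\gamma(s)=(t(s),x(s))$ is a future timelike curve from $(t_0,x_0)$ to $(t_1,x_1)$. The dictionary gives $\dot x\in A$ everywhere, so $x$ is $F$-admissible, and $\dot t(s)>F(\dot x(s))$ pointwise. Integrating,
\[
t_1-t_0=\int \dot t\,\de s>\int F(\dot x)\,\de s=\ell_F(x)\ge d_F(x_0,x_1),
\]
the strict inequality holding because the integrand $\dot t-F(\dot x)$ is positive on an interval of positive length.

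For the converse, assume $d_F(x_0,x_1)<t_1-t_0$. By definition of the $F$-separation as an infimum over $\Omega^A_{x_0,x_1}$, there is an $F$-admissible curve $x:[0,1]\to S$ from $x_0$ to $x_1$ with $\ell_F(x)<t_1-t_0$. I would then lift it by choosing $\dot t(s)=F(\dot x(s))+c$ with the constant $c=t_1-t_0-\ell_F(x)>0$ and $t(0)=t_0$; this forces $t(1)=t_0+\ell_F(x)+c=t_1$ and $\dot t>F(\dot x)$ everywhere, so $\gamma=(t,x)$ is future timelike by the dictionary, proving $(t_0,x_0)\ll(t_1,x_1)$. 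Reparametrization invariance of $\ell_F$ and piecewise smoothness are harmless. The displayed formulas for $I^\pm$ are then immediate rewritings: $(t,y)\in I^+(t_0,x_0)$ iff $d_F(x_0,y)<t-t_0$, and putting $s=t-t_0>0$ identifies this with $\cup_{s>0}\{t_0+s\}\times B^+_F(x_0,s)$, and symmetrically for $I^-$ using $d_F(y,x_0)<t_0-t$.

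The main obstacle I anticipate is not the integration but justifying the velocity dictionary cleanly in the critical (Kropina) region, where the domain $A_p$ of $F$ is only a half-space with $0\notin A_p$. There one must check that a future timelike velocity really has $\dot x\in A_p$ (in particular $\dot x\neq 0$, since $\partial_t$ is lightlike at such points, so $\dot t\,\partial_t$ alone is never timelike) and that the lifted curve never leaves $A$, so that $F(\dot x)$ stays well defined and smooth along the curves used in both directions. Once this is secured, the equivalence and the \emph{therefore} formulas follow as above.
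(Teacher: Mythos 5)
Your proof is correct and takes essentially the same route as the paper's: the paper likewise reduces everything to the pointwise dictionary that $\tau\partial_t+w$ is future timelike if and only if $w$ lies in the cone domain and $\tau>F(w)$, and then invokes the integrate-and-lift argument of \cite[Prop.~4.1]{CJS14}, which is exactly what you carry out explicitly (including the correct attention to the half-space domain $A_p$ in the Kropina region).
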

	\begin{proof}
	It is straightforward to prove that by the same definition of causality and the metric $F$ that  $(\tau,v)$ is future timelike if and only if $\tau>F(v)$, namely, because  $(1,v)$ is future timelike if and only if $v$ is contained in the unit ball of $F$. Then the proof follows the exactly same steps as in \cite[Prop. 4.1]{CJS14}.
	\end{proof}
	The next result is proved in \cite[Prop. 4.2]{CJS14}.
	\begin{proposition} \label{pls} The $F$-separation  associated with any conic
		pseudo-Finsler metric $F$ is upper semi-continuous, i.e., if $
		x_n\rightarrow x$, $ y_n\rightarrow y$, then $$\limsup_n
		d_F(x_n,y_n) \leq d_F(x,y).$$ In particular, if $x\prec y$, then $x_n
		\prec y_n$ for large $n$.
	\end{proposition}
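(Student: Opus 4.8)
The plan is to prove the $\limsup$ inequality first and then read off the ``in particular'' part. If $d_F(x,y)=+\infty$ there is nothing to show, so assume $\Omega^A_{x,y}\neq\emptyset$ and fix $\varepsilon>0$. Choose an $F$-admissible curve $\gamma:[0,1]\to S$ from $x$ to $y$ with $\ell_F(\gamma)<d_F(x,y)+\varepsilon/3$. The idea is to build an $F$-admissible curve from $x_n$ to $y_n$ by keeping the bulk of $\gamma$ and splicing in two short connectors at the ends; the whole difficulty will be to produce these connectors without paying more than $\varepsilon$ in $F$-length. (For a general conic pseudo-Finsler metric $F$ on $S$ one first realises it as the metric of the \cstk splitting $\R\times S$ with triple $(\de t,\partial_t,F)$, so there is no loss of generality in assuming we are in such a splitting.)

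First I would trim $\gamma$: for small $\delta>0$ set $a_\delta:=\gamma(\delta)$ and $b_\delta:=\gamma(1-\delta)$, so that $\ell_F(\gamma|_{[\delta,1-\delta]})\le\ell_F(\gamma)$, while the discarded initial and final pieces have $F$-lengths $\ell_\delta:=\int_0^\delta F(\dot\gamma)\,\de t$ and $\ell'_\delta:=\int_{1-\delta}^{1}F(\dot\gamma)\,\de t$, both tending to $0$ as $\delta\to0^+$ by integrability of $F(\dot\gamma)$. In particular $d_F(x,a_\delta)\le\ell_\delta$ and $d_F(b_\delta,y)\le\ell'_\delta$.

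The key step is then to connect $x_n$ to $a_\delta$ and $b_\delta$ to $y_n$ cheaply, and here I would \emph{avoid} constructing the connectors by hand, which is delicate: in the critical region $A_p$ is only a half-space and $F$ blows up near $\partial A_p$, so a naive short segment between nearby points need not be admissible, let alone short. Instead I would exploit that the forward and backward $d_F$-balls are open. Indeed, by Proposition~\ref{bolas} the chronological sets of the \cstk splitting are $I^+(t_0,p)=\cup_{s>0}\{t_0+s\}\times B^+_F(p,s)$ and $I^-(t_0,p)=\cup_{s>0}\{t_0-s\}\times B^-_F(p,s)$; since $I^\pm$ are open by Proposition~\ref{basicprops}(i), intersecting with a slice $\{t=\mathrm{const}\}$ shows that every $B^+_F(p,s)$ and $B^-_F(p,s)$ is open in $S$. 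Fixing $\eta>0$, the ball $B^-_F(a_\delta,\ell_\delta+\eta)$ is then an open neighbourhood of $x$ (as $d_F(x,a_\delta)\le\ell_\delta$), hence contains $x_n$ for all large $n$; likewise $B^+_F(b_\delta,\ell'_\delta+\eta)$ is an open neighbourhood of $y$ and contains $y_n$ for large $n$. Thus for large $n$ there are $F$-admissible connectors $\sigma_n$ from $x_n$ to $a_\delta$ with $\ell_F(\sigma_n)<\ell_\delta+\eta$ and $\tau_n$ from $b_\delta$ to $y_n$ with $\ell_F(\tau_n)<\ell'_\delta+\eta$.

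Concatenating $\sigma_n$, $\gamma|_{[\delta,1-\delta]}$ and $\tau_n$ yields an $F$-admissible curve from $x_n$ to $y_n$, so $d_F(x_n,y_n)<(\ell_\delta+\eta)+(d_F(x,y)+\varepsilon/3)+(\ell'_\delta+\eta)$ for large $n$. Choosing $\delta$ and $\eta$ so small that $\ell_\delta+\ell'_\delta+2\eta<2\varepsilon/3$ gives $d_F(x_n,y_n)<d_F(x,y)+\varepsilon$ for all large $n$, whence $\limsup_n d_F(x_n,y_n)\le d_F(x,y)+\varepsilon$; letting $\varepsilon\to0$ proves the claim. Finally, if $x\prec y$ then $d_F(x,y)<+\infty$, so the inequality just proved forces $d_F(x_n,y_n)<+\infty$, i.e. $\Omega^A_{x_n,y_n}\neq\emptyset$, i.e. $x_n\prec y_n$, for all large $n$. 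The only point requiring real care, and the main obstacle, is precisely the openness of the $d_F$-balls: it is exactly what lets me replace the awkward direct construction of admissible connectors in the conic/Kropina regime by a soft neighbourhood argument.
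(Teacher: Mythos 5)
Your argument is sound --- and genuinely different from the paper's --- for the case it actually covers, namely a Finsler--Kropina type metric, i.e.\ a conic Finsler metric whose indicatrix closure is a compact, strongly convex sphere enclosing the origin. In that case the cone over $\{1\}\times\Sigma$ in $T(\R\times S)$ is a cone structure with $\partial_t$ a causal cone Killing field, Proposition~\ref{bolas} applies, and your derivation of the openness of the balls $B^\pm_F(p,s)$ from the openness of $I^\pm$ (Proposition~\ref{basicprops}$(i)$), followed by the trim-and-splice construction, is correct; there is no circularity, since Proposition~\ref{bolas} is established independently of upper semi-continuity. The paper itself offers no argument here: it cites \cite[Prop.~4.2]{CJS14}, where the proof is the direct one --- take a near-minimizing admissible curve $\gamma$, observe that its velocity field traces out a compact subset of the open cone domain $A$, and deform $\gamma$ in the $C^1$ topology near its endpoints so that it starts at $x_n$ and ends at $y_n$; admissibility is preserved because one stays in a neighbourhood of that compact set, and the length varies continuously. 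So the ``delicacy'' you fear (short connecting segments degenerating near $\partial A_p$, where $F$ blows up) does not arise in the standard proof either: one perturbs the given curve rather than splicing in new connectors. Your soft route buys a proof with no explicit perturbation, at the cost of importing the whole causal machinery; note, though, that the openness of $I^\pm$ you rely on is itself proved by exactly the $C^1$-perturbation argument you are trying to avoid.

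The genuine gap is the parenthetical reduction: ``for a general conic pseudo-Finsler metric $F$ one first realises it as the metric of the \cstk splitting with triple $(\de t,\partial_t,F)$, so there is no loss of generality.'' This is false. The proposition is stated for an \emph{arbitrary} conic pseudo-Finsler metric, whose indicatrix need be neither compact nor enclose the origin --- for instance the conic Finsler metric $F$ of a wind Finslerian structure on the strong-wind region, whose indicatrix is a non-compact open piece of $\Sigma_p$ and whose domain $A_p$ is a proper convex cone, or the Lorentz--Finsler metric $F_l$. For such $F$ the triple $(\de t,\partial_t,F)$ does not define a cone structure in the sense of Definition~\ref{d_conestructure}, Proposition~\ref{bolas} is unavailable (in the strong-wind case the chronological sets are described by the wind balls of Proposition~\ref{bolas2}, which involve both $F$ and $F_l$, not the $d_F$-balls alone), and the openness argument has nothing to stand on. Since the statement is deliberately phrased in this generality --- and \cite{CJS14} uses it in exactly that generality --- you must either run the direct perturbation argument, or explicitly restrict the claim you prove to the Finsler--Kropina case.
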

	\begin{proposition}\label{ctf}
	Let $(\R\times S,\C)$ be a \cstk splitting with future  causal $K=\partial_t$  and $F$, the Finsler-Kropina type metric in $S$ associated with it, then  the function  
		$$
		\tau_F:
		S\times S \rightarrow [0,+\infty], \quad \quad \tau_F(x,y):=
		\inf\{t\in\R : \, (0,x)\ll (t,y)\}$$ is equal to the $F$-separation
		function $d_F$.
	\end{proposition}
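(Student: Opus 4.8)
The plan is to obtain this essentially for free from Proposition~\ref{bolas}, which already packages the entire chronological relation of the \cstk splitting into the $F$-separation. First I would specialize that proposition to $t_0=0$, $x_0=x$, $t_1=t$, $x_1=y$, which yields the equivalence $(0,x)\ll(t,y)\ \Leftrightarrow\ d_F(x,y)<t$. Consequently the set over which the infimum defining $\tau_F(x,y)$ is taken is exactly the open ray $\{t\in\R:\ d_F(x,y)<t\}$, and the whole statement reduces to computing the infimum of this ray.

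Second, I would carry out that elementary computation. When $d_F(x,y)\in[0,+\infty)$ the ray equals $(d_F(x,y),+\infty)$, whose infimum is its left endpoint $d_F(x,y)$; hence $\tau_F(x,y)=d_F(x,y)$. The only point needing a word of care is the degenerate case $d_F(x,y)=+\infty$, which by the definition of the $F$-separation occurs precisely when $\Omega^A_{x,y}=\emptyset$. In that case no $t$ satisfies $(0,x)\ll(t,y)$, the ray is empty, and with the standard convention $\inf\emptyset=+\infty$ one still gets $\tau_F(x,y)=+\infty=d_F(x,y)$. Thus equality holds in every case, and the codomains match since both functions take values in $[0,+\infty]$.

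I do not expect a genuine obstacle here: the substantive content lives entirely in Proposition~\ref{bolas}, and what remains is the trivial observation that the infimum of an open half-line is its finite endpoint, together with the bookkeeping at $+\infty$. If one preferred not to lean on the full characterization of Proposition~\ref{bolas}, the same conclusion could be reached by establishing the two inequalities separately: the bound $\tau_F(x,y)\le d_F(x,y)$ by producing, for each $\varepsilon>0$, an $F$-admissible curve of $F$-length below $d_F(x,y)+\varepsilon$ and lifting it to a future timelike curve arriving at a time arbitrarily close to $d_F(x,y)$, and the reverse bound $\tau_F(x,y)\ge d_F(x,y)$ by projecting any timelike curve witnessing $(0,x)\ll(t,y)$ to an $F$-admissible curve in $S$ whose $F$-length is strictly less than $t$. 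This alternative, however, merely re-derives the equivalence of Proposition~\ref{bolas}, so I would present the short argument above and simply cite \cite[Prop. 4.3]{CJS14} for the analogous Randers/wind case.
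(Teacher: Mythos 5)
Your proposal is correct and follows exactly the paper's route: the paper's proof consists of the single line ``It follows from Proposition~\ref{bolas}'', and you have simply made explicit the elementary step that the set $\{t : d_F(x,y)<t\}$ is an open ray whose infimum is $d_F(x,y)$, including the $\inf\emptyset=+\infty$ bookkeeping. Nothing further is needed.
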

	\begin{proof}
It follows from Proposition \ref{bolas}.
	\end{proof}
	\begin{theorem}\label{tcontdf}
		The $F$-separation $d_F: S\times S \rightarrow [0,+\infty]$
		associated with any Finsler-Kropina type metric is continuous away from the
		diagonal $D=\{(x,x): x\in S\}\subset S\times S$.
	\end{theorem}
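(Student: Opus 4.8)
The plan is to separate the claim into upper and lower semicontinuity of $d_F$ off the diagonal. Upper semicontinuity is precisely Proposition~\ref{pls}, which holds for every conic pseudo-Finsler metric and at every pair of points; hence the entire content of the theorem is the lower semicontinuity: if $x\neq y$, $x_n\to x$ and $y_n\to y$, then $d_F(x,y)\le \liminf_n d_F(x_n,y_n)$. I would argue by contradiction. After passing to a subsequence, suppose $d_F(x_n,y_n)\to L$ with $L<d_F(x,y)$, and fix $t$ with $L<t<d_F(x,y)$, so that $d_F(x_n,y_n)<t$ for all large $n$.

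By the causal characterization of Proposition~\ref{bolas}, $d_F(x_n,y_n)<t$ is equivalent to $(0,x_n)\ll(t,y_n)$, so there are future timelike curves $\gamma_n$ in $\R\times S$ from $(0,x_n)$ to $(t,y_n)$. Since $t$ is a time function (Proposition~\ref{t}), each $\gamma_n$ can be reparametrized by the time coordinate as $\gamma_n(\tau)=(\tau,c_n(\tau))$ with $\tau\in[0,t]$ and $c_n(0)=x_n$, $c_n(t)=y_n$; the timelike condition then reads $F(\dot c_n(\tau))<1$ (this is the computation already used in the proof of Proposition~\ref{bolas}, where a vector $(\tau_0,v)$ is future timelike iff $\tau_0>F(v)$). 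In particular $\ell_F(c_n)<t$ and every velocity $\dot c_n(\tau)$ lies in the closed unit ball of $\Sigma$, which is compact and contained in the open domain $A$, so the constraint $F(\dot c_n)<1$ keeps the velocities uniformly bounded and bounded away from $\partial A$ (this holds even in the critical region, where $F$ blows up only on $\partial A$).

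The heart of the argument is to extract a limit: I want a curve $c:[0,t]\to S$ with $c(0)=x$, $c(t)=y$ and $F(\dot c)\le 1$. Concretely, once the $c_n$ are confined to a fixed compact set $K\subset S$, their velocities are uniformly bounded in an auxiliary Riemannian metric on $K$, so Arzel\`a--Ascoli yields a uniformly convergent subsequence with limit $c$ joining $x$ to $y$, and lower semicontinuity of the (convex) $F$-length functional gives $\ell_F(c)\le\liminf_n\ell_F(c_n)\le t$. Since $c$ is $F$-admissible (its velocity stays in the compact unit ball, hence in $A$), this yields $d_F(x,y)\le\ell_F(c)\le t$, contradicting $t<d_F(x,y)$. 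The same conclusion can be phrased purely causally: extending the $\gamma_n$ to inextendible causal curves and invoking the limit curve theorems available under strong causality (Propositions~\ref{strongcaus}, \ref{existslimitcurve} and \ref{generallimitcurve}) produces a $\C$-causal limit curve from $(0,x)$ to $(t,y)$, i.e. $(0,x)\le(t,y)$, and reparametrizing this causal curve by time gives back an $F$-admissible curve of $\ell_F$-length $\le t$.

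The main obstacle is exactly the confinement step, because $F$ is not assumed complete and a priori the curves $c_n$ could escape to infinity in $S$ while keeping $\ell_F(c_n)<t$. This is where the hypothesis $x\neq y$ is used and where strong causality (Proposition~\ref{strongcaus}) and the time function $t$ (Proposition~\ref{t}) enter: they control the relevant causal diamond between the time slices $\{0\}$ and $\{t\}$ near the fixed endpoints and prevent both escape and degeneration of the limit to a constant curve. Off the diagonal the connecting curves cannot collapse to a point, so the limit is a genuine curve from $x$ to $y$; on the diagonal this fails precisely because of the blow-up of the Finsler--Kropina metric at coincident points in the critical region, which is the reason the statement is restricted to $S\times S\setminus D$.
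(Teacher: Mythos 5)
Your overall route coincides with the paper's: translate $d_F(x_n,y_n)<t$ into $(0,x_n)\ll(t,y_n)$ via Proposition~\ref{bolas}, take upper semicontinuity from Proposition~\ref{pls}, and produce a connecting curve in the limit. The problem is that the step you yourself single out as ``the main obstacle'' --- confining the curves $c_n$ to a compact subset of $S$ --- is not actually closed by what you invoke. Strong causality (Proposition~\ref{strongcaus}) and the time function $t$ (Proposition~\ref{t}) do \emph{not} control the region swept out by causal curves between the slices $\{0\}\times S$ and $\{t\}\times S$: compactness of the sets $J^+(p)\cap J^-(q)$ is precisely global hyperbolicity, which is not available here (it is characterized only later, in Theorem~\ref{kropinaLadder}, by an additional completeness-type hypothesis on $F$). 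Consequently there is no compact set on which to run Arzel\`a--Ascoli, and the fiberwise compactness of the unit balls gives a uniform velocity bound only over compact subsets of $S$. The limit-curve variant has the same problem: Proposition~\ref{generallimitcurve} yields a future-inextendible causal limit curve through $(0,x)$, but its $t$-coordinate may remain below $t$ while the curve escapes every compact subset of $S$, so it need not meet the slice $\{t\}\times S$, let alone arrive at $(t,y)$. This escape alternative has to be excluded or circumvented, and nothing in your argument does so; handling it is the technical core of \cite[Th.~4.5]{CJS14}, to which the paper's proof defers.

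There is a second, smaller gap. Even granting a causal limit curve from $(0,x)$ to $(t,y)$, its projection $c$ is only a wind curve with $F(\dot c)\le 1$; in the critical (Kropina) region the velocity may degenerate to the zero vector, which lies on $\partial A_p$, so $c$ need not be $F$-admissible and $d_F(x,y)\le\ell_F(c)$ does not follow from the definition of $d_F$ as an infimum over $\Omega^A_{x,y}$. Your assertion that the closed unit ball is contained in $A$ fails exactly at $0_p$ for $p\in M_{crit}$, and one cannot repair this by pushing the endpoint up in time, since $d_F(y,y)$ need not vanish there. This is why the paper additionally invokes part $(iv)$ of Proposition~\ref{basicprops}, replacing the bare limit curve by a cone geodesic when $(t,y)\in J^+((0,x))\setminus I^+((0,x))$. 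In short, the strategy is the right one, but the two decisive steps are asserted rather than proved.
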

	\begin{proof}
We will construct a cone structure on $\R\times S$ and use  Propositions~\ref{bolas} and \ref{ctf}. As a first step,  consider a one-form $\Omega$ with kernel the tangent space to the slices $\{t_0\}\times S$ for all $t_0\in\R$, and such that $\Omega(\partial_t)=1$. The cone wind triple $(\Omega, \partial_t, \Sigma)$, with $\Sigma$ the indicatrix of $F$, determines a cone structure $(\R\times S,\C)$ which has $\partial_t$ as a future causal cone Killing vector field (recall Sections~\ref{windtriples} and \ref{cstk}). 
By applying Proposition~\ref{generallimitcurve} and part $(iv)$ of Proposition~\ref{basicprops},
the proof proceeds using the same steps as those in the proof of \cite[Th. 4.5]{CJS14}. 
	\end{proof}
\begin{proposition} 
		The $F$-separation $d_F$ associated with  a Finsler-Kropina type metric is discontinuous at $(x_0,x_0)$ if $d_F(x_0,x_0)>0$.  Moreover, 
		\begin{itemize}
			\item[(i)] the property $d_F(x_0,x_0)>0$ occurs if there exists a neighborhood $U$ of  
			$x_0$ such that no admissible loop contained in $U$ exists, i.e.  $y\not\prec_U y$, for all $y\in U$;
			in particular for any Kropina type metric  as in \eqref{kropina}  such that  $\ker \beta$ is  integrable,  i.e. $\beta\wedge d\beta=0$; 
			\item[(ii)]for any Kropina type norm on a vector space, $d_F(x,x)=\infty$ for all $x\in V$.
		\end{itemize}
	\end{proposition}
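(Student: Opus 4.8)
The plan is to treat the three assertions separately; none requires Theorem~\ref{tcontdf}, since the discontinuity can be produced by hand and the criteria (i)--(ii) both reduce to the absence of short admissible loops. For the \emph{discontinuity when $d_F(x_0,x_0)>0$}, I would produce a sequence of off-diagonal points converging to $(x_0,x_0)$ along which $d_F$ tends to $0$. As $A_{x_0}$ is a nonempty open conic set, pick $v\in A_{x_0}\setminus\{0\}$ and, using that $A$ is open, a smooth admissible curve $\sigma:[0,\varepsilon)\to S$ with $\sigma(0)=x_0$, $\dot\sigma(0)=v$. Setting $q_s:=\sigma(s)$, the piece $\sigma|_{[0,s]}$ is admissible from $x_0$ to $q_s$, so $0\le d_F(x_0,q_s)\le\ell_F(\sigma|_{[0,s]})=\int_0^s F(\dot\sigma)\,\de t\to 0$ as $s\to0^+$, while $q_s\to x_0$ with $q_s\ne x_0$ for small $s>0$. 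Hence $(x_0,q_s)\to(x_0,x_0)$ but $d_F(x_0,q_s)\to 0\ne d_F(x_0,x_0)$, so $d_F$ is not continuous at $(x_0,x_0)$, whether $d_F(x_0,x_0)$ is finite or $+\infty$.

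For \emph{criterion (i)}, assume a neighbourhood $U$ of $x_0$ carries no admissible loop. The crucial local fact is that admissible curves of small $F$-length cannot travel far. I would fix a background Riemannian metric $h$ on a relatively compact $U''$ with $x_0\in U''\subset\overline{U''}\subset U$. Since $F$ is of Finsler--Kropina type, each indicatrix $\Sigma_p$ is compact and strongly convex, so the closed unit ball $\bar B_p$ is compact; over the compact set $\overline{U''}$ these balls are uniformly $h$-bounded, say $|v|_h\le R$ for all $v\in\bar B_p$. Positive homogeneity then gives $F(v)\ge|v|_h/R$ for every $v\in A$ with footpoint in $\overline{U''}$, because $v/F(v)\in\Sigma_p\subset\bar B_p$. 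With $\delta:=d_h(x_0,\partial U'')>0$, any admissible loop at $x_0$ must exit $\overline{U''}$ (otherwise it would be an admissible loop contained in $U$), and up to its first exit it covers $h$-length $\ge\delta$, whence $\ell_F\ge\delta/R$; therefore $d_F(x_0,x_0)\ge\delta/R>0$ and the discontinuity follows from the first part. For the integrable subcase $F=F_0^2/\beta$ with $\beta\wedge\de\beta=0$, the Frobenius theorem gives, on a connected $U$, an integrating factor $\beta=\mu\,\de f$ with $\mu$ nowhere zero of constant sign; along any admissible curve $\beta(\dot\gamma)>0$ forces $\pm(f\circ\gamma)$ to be strictly increasing, so no admissible loop lies in $U$ and the hypothesis of (i) is met.

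For \emph{statement (ii)}, on a vector space $V$ with a fixed Kropina type norm the unit ball is a compact strongly convex body meeting the origin on its boundary, so its unique supporting hyperplane there is $\ker\beta$ for a linear functional $\beta$, and the domain is the half-space $A=\{\beta>0\}$. Because $\beta$ is linear and constant, $\frac{\de}{\de t}\beta(\gamma(t))=\beta(\dot\gamma(t))>0$ along every admissible curve, so a loop $\gamma:[a,b]\to V$ would force $\beta(\gamma(a))=\beta(\gamma(b))$, which is impossible; hence $\Omega^A_{x,x}=\emptyset$ and $d_F(x,x)=+\infty$ for all $x\in V$.

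I expect the \emph{main obstacle} to be the uniform bound $F(v)\ge|v|_h/R$ in the critical region, where $F$ blows up as $v$ approaches $\partial A_p$ and the origin lies on $\partial\bar B_p$. Its validity rests on the compactness of the unit ball $\bar B_p$, which is precisely what the strong convexity and compactness of the indicatrix in the Finsler--Kropina hypothesis provide, together with the local uniformity of $R$ over the compact $\overline{U''}$; this is what legitimizes the ``short curves stay local'' comparison underlying (i), and the rest of the argument is then routine.
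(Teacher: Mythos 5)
Your argument is correct and is essentially the proof that the paper delegates to \cite[Prop.~4.6]{CJS14}: short admissible curves give nearby points at arbitrarily small $F$-separation (hence the discontinuity), the comparison $F(v)\ge |v|_h/R$ on a compact neighbourhood shows that a loop forced to leave $U$ has $F$-length bounded away from zero, and the monotonicity of $f$ (resp.\ of the linear functional $\beta$) excludes admissible loops in the integrable and vector-space cases. The only step worth making explicit is the uniform bound $R$ over $\overline{U''}$, which follows from the compactness of $\cup_{p\in \overline{U''}}\bar\Sigma_p$ guaranteed by the fibrewise compactness and transversality in the definition of the Finsler--Kropina structure.
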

\begin{proof}
As in \cite[Prop. 4.6]{CJS14}, because it depends essentially on the points $x_0$ such that  $F$ restricted to $T_{x_0} S$ is a Kropina type norm.
	\end{proof}
	The discontinuity of $d_F$ at the diagonal necessitates the addiction of the center of the ball at hand:
	\begin{corollary}\label{cclballs}
		The closed forward (resp. backward) $d_F$-balls, defined as the closures of the corresponding open balls, satisfy,  for $r>0$: 
		\[\bar B_F^+(x,r)= \{y\in S:
		d_F(x,y)\leq r\}\cup \{x\}\]
		\[\hbox{(resp.} \; \bar B_F^-(x,r)= \{y\in S: d_F(y,x)\leq r\}\cup
		\{x\}).
		\]
	\end{corollary}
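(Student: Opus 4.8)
The plan is to prove the two set inclusions separately, treating the center $x$ apart from the rest, and to observe that the backward statement follows by the symmetric argument with backward $\Sigma$-admissible curves and the reversed separation $y\mapsto d_F(y,x)$. Throughout I fix $r>0$, abbreviate $B^+:=B_F^+(x,r)=\{y\in S:\ d_F(x,y)<r\}$ (so that $\bar B_F^+(x,r)=\overline{B^+}$ by definition), and fix an auxiliary Riemannian norm $|\cdot|_R$ on $S$.

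First I would show $\overline{B^+}\subseteq\{y\in S:\ d_F(x,y)\le r\}\cup\{x\}$. Let $y\in\overline{B^+}$ with $y\ne x$, and pick $y_n\in B^+$ with $y_n\to y$, so $d_F(x,y_n)<r$. Since $(x,y)$ lies off the diagonal $D$, Theorem \ref{tcontdf} gives continuity of $d_F$ at $(x,y)$, whence $d_F(x,y)=\lim_n d_F(x,y_n)\le r$ and $y$ lies in the right-hand side. It is worth noting that the upper semicontinuity of Proposition \ref{pls} would only control $d_F(x,y)$ from below and is therefore useless here; it is genuine continuity off $D$ that is needed.

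For the reverse inclusion, points $y\ne x$ with $d_F(x,y)<r$ already lie in $B^+\subseteq\overline{B^+}$. The center is handled directly: choosing an admissible direction in the nonempty open cone $A_x$ and an $F$-admissible curve $\sigma:[0,\delta]\to S$ issuing from $x$, the points $\sigma(s)$ satisfy $\sigma(s)\ne x$ and $d_F(x,\sigma(s))\le\ell_F(\sigma|_{[0,s]})\to 0<r$ as $s\to 0^+$, so $\sigma(s)\in B^+$ and $\sigma(s)\to x$; hence $x\in\overline{B^+}$ for every $r>0$. This is exactly where the discontinuity of $d_F$ on $D$ recorded in the previous proposition (the possibility $d_F(x,x)>0$, indeed $=\infty$ for a genuine Kropina norm) forces the explicit term $\cup\{x\}$: when $d_F(x,x)>r$ the center is absent from the sublevel set yet present in the closed ball.

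The remaining, delicate case is $y\ne x$ with $d_F(x,y)=r$, where I would approximate $y$ from inside $B^+$ along almost-minimizing admissible curves cut slightly short. For $n$ large choose $\sigma_n\in\Omega^A_{x,y}$ with $\ell_F(\sigma_n)<r+1/n$ (possible since $d_F(x,y)=r<\infty$), and let $z_n$ be the point of $\sigma_n$ at $F$-arclength $\ell_F(\sigma_n)-2/n$ from $x$; then $d_F(x,z_n)\le\ell_F(\sigma_n)-2/n<r-1/n<r$, so $z_n\in B^+$, while the tail of $\sigma_n$ from $z_n$ to $y$ has $F$-length $2/n\to 0$. I expect the main obstacle to be precisely the passage from this vanishing $F$-length to genuine convergence $z_n\to y$ in $S$, since $d_F$ is neither symmetric nor bounded below by a distance a priori and the infimum defining it need not be attained. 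This is repaired by the uniform bound coming from compactness of the indicatrices: on a compact neighborhood $\bar W$ of $y$ the fibers $\Sigma_p$ are uniformly $|\cdot|_R$-bounded, so there is $c>0$ with $F(v)\ge c\,|v|_R$ for all admissible $v$ over $\bar W$; the tail then has Riemannian length at most $(2/n)/c\to 0$, forcing $z_n\to y$ and hence $y\in\overline{B^+}$. The conceptual heart is thus the bookkeeping around the diagonal: continuity off $D$ controls every point except the center, while the short-curve argument reinstates the center, which is the entire content of the correction term $\cup\{x\}$.
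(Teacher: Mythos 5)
Your argument is correct and is essentially the proof the paper defers to (\cite[Cor.~4.8]{CJS14}): continuity of $d_F$ off the diagonal (Theorem~\ref{tcontdf}) gives the inclusion of the closure into the sublevel set away from the center, a short $F$-admissible curve issuing from $x$ reinstates the center, and truncated almost-minimizers together with the bound $F(v)\geq c\,|v|_R$ coming from compactness of the indicatrices handle the points with $d_F(x,y)=r$. The only cosmetic point is that this lower bound is only available over the compact neighborhood $\bar W$, so rather than bounding the Riemannian length of the whole tail one should note that if $z_n\not\to y$ the tail would have to cross a fixed annulus around $y$ contained in $\bar W$, which already contradicts $\ell_F(\text{tail})\to 0$.
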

	\begin{proof}
		As in \cite[Cor. 4.8]{CJS14}.
		\end{proof}
	\subsection{Ladder of causality}
	\begin{theorem}\label{kropinaLadder} Consider a \cstk  splitting $(\R\times S,\C)$  with $K=\partial_t$ causal and associated Finsler-Kropina type metric $F$ on $S$. Then, $(\R\times S,\C)$  is causally
		continuous, and
		\begin{enumerate}[(i)]  \item the following assertions are
			equivalent:
			
			(i1) $(\R\times S,\C)$ is causally simple 
			
			(i2) $(S,F)$ is {\em convex}, in the sense that  for
			every $x,y\in S$, $x\neq y$,  with $d_F(x,y)<+\infty$, there exists a geodesic
			$\gamma$ from $x$ to $y$ such that $\ell_F(\gamma )=d_F(x,y)$.
			
			(i3) $J^+(p)$ is closed for all $p\in \R\times S$.
			
			(i4) $J^-(p)$ is closed for all $p\in \R\times S$.
			
			\item $(\R\times S,\C)$ is globally hyperbolic (i.e. it is causal and all the intersections $J^+(p) \cap  J^-(q)$ are compact) if and only if  $\bar B_F^+(x,r_1)\cap \bar B_F^-(y,r_2)$ is compact for every $x,y\in
			S$ and
			$r_1,r_2>0$.
			\item The following assertions are
			equivalent:
			
			(iii1) A slice $S_t=\{(t,x): x\in \R\times S\}$ (and, then all the slices) is a
			spacelike Cauchy hypersurface i.e., it is crossed exactly once
			by any
			inextendible timelike curve (and, then, also by any causal one),
			
			(iii2) the closures $\bar{B}_F^+(x,r)$, $\bar{B}_F^-(x,r)$ are compact
			for all $r>0$ and  $x\in S$,
			
			(iii3)   $F$ is forward and backward geodesically complete.
		\end{enumerate}
	\end{theorem}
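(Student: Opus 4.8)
The plan is to convert every causal statement about $(\R\times S,\C)$ into a metric statement about the $F$-separation $d_F$ and its forward/backward balls on $S$, and then to follow the corresponding arguments for wind Riemannian structures in \cite[\S4]{CJS14}, substituting the Finsler-Kropina data at each step. The dictionary needed for this has already been assembled: Proposition~\ref{bolas} expresses $I^\pm$ through $d_F$ (formula \eqref{bolas0}), Proposition~\ref{pls} gives upper semicontinuity of $d_F$, Proposition~\ref{ctf} identifies $d_F$ with the time-separation $\tau_F$, Theorem~\ref{tcontdf} provides continuity of $d_F$ off the diagonal, and Corollary~\ref{cclballs} describes the closed balls. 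I will also freely use that a \cstk splitting is strongly causal (Proposition~\ref{strongcaus}), the basic facts of Proposition~\ref{basicprops}, the limit-curve machinery (Proposition~\ref{generallimitcurve}), and the Cauchy characterization (Proposition~\ref{inextengeo}).

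\textbf{Causal continuity.} By Proposition~\ref{causalcontrefl} it suffices to prove that the splitting is distinguishing and future/past reflective. Distinguishing follows from strong causality (Proposition~\ref{strongcaus}) through the causal ladder, and can also be read off the explicit form of $I^\pm$ in \eqref{bolas0}. Reflectivity I would obtain from the triangle inequality for $d_F$ (valid since the concatenation of $F$-admissible curves is $F$-admissible) together with the explicit descriptions \eqref{bolas0}: unwinding, say, future reflectivity amounts to comparing $\sup_y\big(d_F(y,x_0)-d_F(y,x_1)\big)$ with $\sup_y\big(d_F(x_1,y)-d_F(x_0,y)\big)$, both controlled by $d_F(x_1,x_0)$, while the symmetry $t\mapsto -t$ interchanging $d_F(x,y)$ with $d_F(y,x)$ reduces past reflectivity to future reflectivity. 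The only delicate point is the behaviour near the diagonal, where $d_F$ is discontinuous and $d_F(x,x)$ may be positive; this is exactly what Theorem~\ref{tcontdf} and the discontinuity analysis are designed to control.

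\textbf{Part (i).} Since the splitting is distinguishing, causal simplicity reduces to closedness of the sets $J^\pm(p)$, giving (i1)$\Leftrightarrow$(i3) and (i1)$\Leftrightarrow$(i4); the equivalence (i3)$\Leftrightarrow$(i4) follows from the future/past symmetry $t\mapsto-t$ together with $d_F(x,y)\leftrightarrow d_F(y,x)$. The substantive equivalence is (i2)$\Leftrightarrow$(i3). Using $\overline{J^+(p)}=\overline{I^+(p)}$ (Proposition~\ref{basicprops}(vi)) and the closed-ball formula of Corollary~\ref{cclballs}, closedness of $J^+(t_0,x_0)$ becomes the statement that whenever $d_F(x_0,y)<+\infty$ the infimum defining $d_F(x_0,y)$ is attained by an $F$-geodesic, i.e.\ convexity of $(S,F)$. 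For the forward implication I would take a point of $J^+\setminus I^+$ and invoke Proposition~\ref{basicprops}(iv) to produce a cone geodesic without conjugate points, which by Proposition~\ref{coneLgeos} projects to a minimizing $F$-geodesic; for the converse, convexity together with the limit-curve argument of Proposition~\ref{generallimitcurve} shows that boundary points of $J^+$ are reached by causal geodesics and hence belong to $J^+$.

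\textbf{Parts (ii), (iii) and the main obstacle.} For (ii), strong causality being automatic, global hyperbolicity reduces to compactness of the diamonds $J^+(t_0,x_0)\cap J^-(t_1,x_1)$; projecting to $S$ via Proposition~\ref{bolas} and Corollary~\ref{cclballs}, and confining the $t$-coordinate to the bounded interval it is forced into, this is equivalent to compactness of $\bar B_F^+(x_0,r_1)\cap\bar B_F^-(x_1,r_2)$ for all $x_0,x_1,r_1,r_2$. For (iii), Proposition~\ref{inextengeo} reduces the Cauchy property of a slice to the requirement that every inextendible cone geodesic cross it; the limit-curve construction of Proposition~\ref{generallimitcurve} then translates this into compactness of the closed balls $\bar B_F^\pm(x,r)$, which is (iii2), and a Hopf--Rinow type argument for the conic Finsler metric $F$ yields (iii2)$\Leftrightarrow$(iii3). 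The main obstacle throughout is the critical (Kropina) region: there $A_p$ is only a half-space, $F$ blows up, $d_F$ may equal $+\infty$ and is discontinuous on the diagonal, and the classical Hopf--Rinow theorem does not apply verbatim. The preparatory results---Theorem~\ref{tcontdf}, Corollary~\ref{cclballs} with its added centre point, and the discontinuity proposition---are precisely what tame these pathologies, and the delicate verification is that the limit-curve and convexity arguments remain valid when admissible velocities are constrained to the half-space $A_p$, so that minimizing or limit sequences cannot escape through the Kropina directions.
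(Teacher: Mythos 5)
Your proposal takes essentially the same route as the paper: the paper's proof is a one-line reduction to \cite[Th. 4.9]{CJS14} for wind Riemannian structures, invoking exactly the preparatory results you assemble (Propositions \ref{causalcontrefl}, \ref{inextengeo}, \ref{t} and \ref{strongcaus}, together with the $d_F$-dictionary of \S\ref{FinslerKropina}). Your sketch simply makes explicit the steps that the paper delegates to that reference, and correctly identifies the Kropina-region pathologies as the only points requiring care.
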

\begin{proof}
	If follows the same steps as \cite[Th. 4.9]{CJS14} taking into account Propositions \ref{causalcontrefl},   \ref{inextengeo}, \ref{t} and \ref{strongcaus}.
	\end{proof}
	As a straightforward consequence of Theorem~\ref{kropinaLadder}  and the  implications from causality theory
	{\em (iii1)} $\Rightarrow$ global hyperbolicity $\Rightarrow$ {\em (i1)} (applying Proposition \ref{globimpclosedcones}), one has the following version of Hopf-Rinow theorem.
	\begin{corollary}\label{cRandersKropinaHopfRinow}
		For any Finsler-Kropina type metric $F$ on a manifold $S$, the
		forward (resp. backward) geodesic completeness of $d_F$ is equivalent to the
		compactness of the forward closed balls $\bar B^+_F(x,r)$ (resp. backward closed balls $ \bar B^-_F(x,r)$)  for every $x\in S, r>0$.  Moreover,  any of these properties implies the compactness of the intersection between any pair 
		of forward and backward closed balls. Finally,  the  last property  implies the convexity of $(S,F)$, in the sense of Theorem~\ref{kropinaLadder}.
	\end{corollary}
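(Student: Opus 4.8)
The plan is to treat the three assertions of the corollary in turn, invoking the one-sided Hopf--Rinow theorem for conic Finsler metrics for the first and the causal ladder of Theorem~\ref{kropinaLadder} for the other two. For the first assertion I would pass to the reverse metric $\tilde F(v):=F(-v)$, which is again of Finsler--Kropina type and satisfies $d_{\tilde F}(x,y)=d_F(y,x)$; hence $\bar B^+_{\tilde F}(x,r)=\bar B^-_F(x,r)$ and forward completeness of $\tilde F$ is backward completeness of $F$, so the backward statement reduces to the forward one. It then remains to show that forward geodesic completeness of $d_F$ is equivalent to compactness of all forward closed balls. For ``compact balls $\Rightarrow$ completeness'' a unit-speed forward geodesic $\gamma$ of finite length $\ell$ has image in $\bar B^+_F(\gamma(0),\ell)$, which is compact by hypothesis, and over that compact set the unit-speed condition keeps $\dot\gamma$ bounded (the indicatrices $\Sigma_p$ being fiberwise compact), so $\gamma$ extends past $\ell$; the converse is the usual minimizing-geodesic and Arzel\`a--Ascoli argument. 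The only novelty over the classical Finsler case occurs at the critical (Kropina) points, where $A_p$ is merely a half-space, but the indicatrix closure $\bar\Sigma_p=\Sigma_p\cup\{0\}$ is still compact, so the extension argument is unaffected.

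The second assertion is then immediate, since the closed balls are closed by construction (they are closures, see Corollary~\ref{cclballs}). Indeed, if the forward closed balls are compact, then for any $x,y\in S$ and $r_1,r_2>0$ the set $\bar B^+_F(x,r_1)\cap \bar B^-_F(y,r_2)$ is a closed subset of the compact set $\bar B^+_F(x,r_1)$, hence compact; exchanging the roles of forward and backward settles the case in which the backward balls are compact. By the first assertion, each of the four completeness/compactness properties forces one of these two situations, so every one of them implies the compactness of every such intersection.

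For the third assertion I would run the causal ladder. By part (ii) of Theorem~\ref{kropinaLadder}, compactness of all the intersections $\bar B^+_F(x,r_1)\cap \bar B^-_F(y,r_2)$ is equivalent to global hyperbolicity of the \cstk splitting $(\R\times S,\C)$ determined by $F$ (which is causal, being even causally continuous). Global hyperbolicity implies, by Proposition~\ref{globimpclosedcones}, that the sets $J^\pm(p)$ are closed, i.e.\ conditions (i3) and (i4) of the theorem hold; by the equivalences in part (i) this is causal simplicity (i1), equivalently the convexity (i2) of $(S,F)$. Hence compactness of the intersections implies convexity, as claimed.

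The main obstacle is the first assertion: Theorem~\ref{kropinaLadder} provides only the \emph{two-sided} equivalence (iii2)$\Leftrightarrow$(iii3), where forward \emph{and} backward ball-compactness is matched against forward \emph{and} backward completeness, and this does not formally yield the decoupled one-sided equivalences. One therefore has to carry out the one-sided Hopf--Rinow argument directly, with the extra care demanded by the critical region, where $d_F$ may be infinite and is discontinuous on the diagonal.
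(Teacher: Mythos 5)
Your treatment of the second and third assertions is correct and, for the third, coincides with the paper's own route: compactness of the intersections gives global hyperbolicity by Theorem~\ref{kropinaLadder}(ii), hence closedness of $J^\pm(p)$ by Proposition~\ref{globimpclosedcones}, hence causal simplicity and thus convexity by part (i) of the same theorem. You are also right that part (iii) of Theorem~\ref{kropinaLadder} only couples \emph{simultaneous} forward and backward completeness with compactness of \emph{both} families of balls, so the first assertion does not follow formally from it; the paper is terse here and is implicitly invoking the one-sided arguments of \cite{CJS14}, which are carried out in the spacetime $\R\times S$ (lifting admissible curves to causal curves and using the limit-curve results of Proposition~\ref{generallimitcurve} and Lemma~\ref{limitcurve} together with Propositions~\ref{bolas2} and \ref{lreflect}), not by a direct metric argument on $S$. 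Your reduction of the backward case to the forward one via the reverse metric $\tilde F(v)=F(-v)$ is fine.

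The genuine gap is in your direct proof of the first assertion, and it sits exactly at the critical (Kropina) points that you declare harmless. For ``compact balls $\Rightarrow$ completeness'': a unit-speed geodesic is an integral curve of the geodesic spray, which is defined only on $A\setminus 0$; over a compact set containing critical points the unit indicatrix bundle $\Sigma$ is \emph{not} compact (only $\bar\Sigma=\Sigma\cup\{0\}$ is), so the accumulation value of $\dot\gamma(t_n)$ may be the zero vector, where the spray degenerates and the ODE extension argument fails. Compactness of $\bar\Sigma_p$ is therefore not enough; one must exclude degeneration of the velocity, which is precisely what the lift to a lightlike cone geodesic of $(\R\times S,\C)$ accomplishes, since there the velocity $\partial_t+\dot x$ stays in the cone $\C$ away from the zero section over a compact set. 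For ``completeness $\Rightarrow$ compact balls'': the ``usual minimizing-geodesic and Arzel\`a--Ascoli argument'' relies on local existence of minimizers between nearby points and on finiteness and continuity of the distance, both of which fail in the critical region ($d_F(x,x)=+\infty$ for a Kropina norm, and a limit of $F$-admissible curves need not be $F$-admissible, as the limit velocities can land on $\ker\beta$ or at $0$); the cure is again to pass to causal curves in $\R\times S$, where limits of causal curves remain causal. As written, your first assertion amounts to the claim that the classical Hopf--Rinow argument survives the critical region, rather than a proof that it does.
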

Observe that Finlser pp-waves provide examples of Finsler spacetimes having a lightcone admitting a cone Killing field of lightlike type (see \cite[\S6]{AJW23} for explicit examples).
	\section{Cone structures with an arbitrary cone  Killing field}\label{arbitraryKilling}
	In this section we consider the general case of a cone Killing field $K$  with no restriction on its pointwise  causal character. 
	\begin{proposition}\label{bolas2} Let $(\R\times S,\C)$ be a \cstk  splitting and $\Sigma$, the wind Finslerian structure in $S$ associate with it. Then:
		\begin{align*}
			&I^+(t_0,x_0)= \cup_{s> 0}\{t_0+s\}\times B_{\Sigma}^+(x_0,s),\\
			&I^-(t_0,x_0)= \cup_{s> 0}\{t_0-s\}\times B_{\Sigma}^-(x_0,s),\\
			&J^+(t_0,x_0)= \cup_{s\geq  0}\{t_0+s\}\times \hat{B}_{\Sigma}^+(x_0,s),\\
			&J^-(t_0,x_0)= \cup_{s\geq  0}\{t_0-s\}\times \hat{B}_{\Sigma}^-(x_0,s).
		\end{align*}
	\end{proposition}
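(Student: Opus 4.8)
The plan is to translate the causal relations of the \cstk splitting into statements about wind curves through the cone wind triple $(\Omega,\partial_t,\Sigma)$ furnished by Propositions~\ref{conewind} and \ref{windFinslercone}, and then to recognise the curves that appear as exactly those defining the wind balls and the c-balls of Definition~\ref{sigmaballs}. Since the chronological and causal pasts of a cone structure are the futures of the time-reversed cone structure, it suffices to prove the two formulas for $I^+$ and $J^+$; the formulas for $I^-$ and $J^-$ then follow by applying the same argument after reversing the cones. The first ingredient is a pointwise dictionary. Writing a vector over $x\in S$ as $\tau\partial_t+y$ with $y\in T_xS$ and recalling that $\ker\Omega=TS$ is spacelike, any future causal vector satisfies $\tau=\Omega(\tau\partial_t+y)>0$ (this is the computation in the proof of Proposition~\ref{t}). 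Using the homogeneity of $F,F_l$ and the three cases of Definition~\ref{windregions}, a direct ray-by-ray analysis of $\bar B_x$ shows that $\partial_t+u$ is future causal (i.e. $u\in\bar B_x$) if and only if $F(u)\le 1\le F_l(u)$, with the convention $F_l\equiv+\infty$ on $M_{mild}$, and future timelike (i.e. $u\in B_x$) if and only if $F(u)<1<F_l(u)$. Rescaling by $\tau>0$ yields $F(y)\le\tau\le F_l(y)$ in the causal case and the strict inequalities in the timelike case.

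With this dictionary the $J^+$ identity is immediate in both directions by means of an affine lift. A future causal curve from $(t_0,x_0)$ to $(t_1,x_1)$ has $\dot t>0$ everywhere (Proposition~\ref{t}), hence may be reparametrised by $t$; its projection $c\colon[t_0,t_1]\to S$ satisfies $\partial_t+\dot c$ future causal, so $F(\dot c)\le 1\le F_l(\dot c)$ pointwise, i.e.\ $c$ is a wind curve of parameter length $t_1-t_0$ and $x_1\in\hat B^+_\Sigma(x_0,t_1-t_0)$. Conversely a wind curve $\gamma\colon[a,b]\to S$ from $x_0$ to $x_1$ with $b-a=s$ lifts to $\lambda\mapsto(t_0+\lambda-a,\gamma(\lambda))$, whose velocity $\partial_t+\dot\gamma$ is future causal by the dictionary, producing a future causal curve to $(t_0+s,x_1)$. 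Together with the convention $\hat B^\pm_\Sigma(x_0,0)=\{x_0\}$ and the fact that $t$ strictly increases along causal curves (so a slice meets $J^+(t_0,x_0)$ only at its base point), this gives the two $J^\pm$ formulas.

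For the $I^+$ formula the inclusion $\subseteq$ runs identically: reparametrising a future timelike curve by $t$ gives $c$ with $\partial_t+\dot c$ future timelike, hence the strict pointwise inequalities $F(\dot c)<1<F_l(\dot c)$; integrating over $[t_0,t_1]$ yields $\ell_F(c)<t_1-t_0<\ell_{F_l}(c)$, so $c$ is a wind curve realising $x_1\in B^+_\Sigma(x_0,t_1-t_0)$. The delicate inclusion is $\supseteq$, which is the main obstacle. Given a wind curve $\gamma\colon[a,b]\to S$ with $b-a=s$ and the strict \emph{integral} inequalities $\ell_F(\gamma)<s<\ell_{F_l}(\gamma)$, the affine lift need not be timelike (it may be merely lightlike where $F(\dot\gamma)=1$ or $F_l(\dot\gamma)=1$), so one must instead look for an increasing lift $\lambda\mapsto(h(\lambda),\gamma(\lambda))$ with $h(a)=t_0$, $h(b)=t_0+s$ and $F(\dot\gamma)<h'<F_l(\dot\gamma)$ \emph{pointwise}; the constraint $\int_a^b h'=s$ is solvable precisely because of the strict integral inequalities. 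Carrying this out requires choosing $h'$ strictly between $F(\dot\gamma)$ and $F_l(\dot\gamma)$ while controlling the points where $\dot\gamma$ vanishes, where the two functions coincide, and where $\gamma$ passes between the mild, critical and strong regions (so that $F_l$ is taken along its continuous extension to $A_E$). This is exactly the reparametrisation argument for wind curves of \cite{CJS14}, and since the cone wind triple of a \cstk splitting reproduces the wind Finslerian data there, the proof proceeds along the same steps; the $I^-$, $J^-$ formulas then follow by time reversal.
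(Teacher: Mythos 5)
Your dictionary between future causal (resp.\ timelike) vectors $\tau\partial_t+y$ and the inequalities $F(y)\le\tau\le F_l(y)$ (resp.\ strict), the reparametrisation of causal curves by $t$ via Proposition~\ref{t}, and the affine lift of wind curves correctly establish the two $J^{\pm}$ identities and the inclusions $I^{\pm}(t_0,x_0)\subseteq\cup_{s>0}\{t_0\pm s\}\times B^{\pm}_{\Sigma}(x_0,s)$; the reduction of the past formulas to the future ones by reversing the cone structure is also fine. The gap is in the inclusion $\supseteq$ for $I^{+}$. You propose to keep the spatial wind curve $\gamma$ fixed and only choose a time component $h$ with $F(\dot\gamma)<h'<F_l(\dot\gamma)$ pointwise and $\int_a^b h'=s$. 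Such an $h$ need not exist: whenever $\dot\gamma(\lambda)\in A_E\setminus A$ (a boundary direction over the strong region $M_l$, where the continuous extensions of $F$ and $F_l$ coincide), or $\gamma(\lambda)\in M_{crit}$ with $\dot\gamma(\lambda)=0$ (where $F=F_l=1$ by convention), one has $F(\dot\gamma(\lambda))=F_l(\dot\gamma(\lambda))$ and the strict sandwich is empty. This is not a removable technicality that better bookkeeping of $h'$ can absorb: the wind-curve condition forces $F(\dot\gamma)=F_l(\dot\gamma)=1$ on any such set, so a wind curve whose velocity lies in $A_E\setminus A$ on, say, the first half of its domain and strictly inside the unit balls on the second half still satisfies the strict integral inequalities $\ell_F(\gamma)<s<\ell_{F_l}(\gamma)$, yet \emph{every} causal lift $\lambda\mapsto(h(\lambda),\gamma(\lambda))$ is necessarily lightlike on the first half, for any admissible $h$. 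No reparametrisation of the fibre coordinate alone can produce a timelike curve here; the spatial projection itself must be deformed.

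This is exactly why the paper's proof, which follows \cite[Prop.\ 5.1]{CJS14}, replaces O'Neill's Prop.\ 10.46 by \cite[Prop.\ 6.5]{AJ16}, i.e.\ the push-up property recorded in Proposition~\ref{basicprops}: a causal curve between two points which is not a lightlike pregeodesic can be deformed, with fixed endpoints, into a timelike one. The affine lift of your wind curve is causal, so $(t_0+s,x_1)\in J^+(t_0,x_0)$; if this point were not in $I^+(t_0,x_0)$, part (iv) of Proposition~\ref{basicprops} would force every causal curve between the two points --- in particular the affine lift --- to be a cone geodesic, and one then derives a contradiction with the strict integral inequalities $\ell_F(\gamma)<s<\ell_{F_l}(\gamma)$. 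Your write-up never invokes this tool, and the closing appeal to ``the reparametrisation argument for wind curves of \cite{CJS14}'' does not correspond to an argument of the purely fibrewise kind you describe. To close the proof you must bring in Proposition~\ref{basicprops}(iv) (equivalently \cite[Prop.\ 6.5]{AJ16}) at this step, as the paper does.
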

\begin{proof}
	Follow the same steps as in \cite[Prop. 5.1]{CJS14} using \cite[Prop. 6.5]{AJ16} rather than \cite[ Prop. 10.46]{o1983}.
	\end{proof}
	\begin{corollary}\label{horismos}
		Two distinct points $(t_0,x_0), (t_1,x_1)\in \R\times S$ are horismotically related
		if and only if
		$x_1 \in  \hat B^+_\Sigma(x_0,t_1-t_0)\setminus  B^+_\Sigma(x_0,t_1-t_0)$.
		
		In this case,  there exists a cone geodesic $\rho:[t_0,t_1]\rightarrow \R\times S$, $\rho(s)=(s,x(s))$ from $(t_0,x_0)$ to $(t_1,x_1)$ and 
		such that $x$ is a  unit extremizing 
		geodesic   of $\Sigma$  from $x_0$ to $x_1$ with $\ell_F(x)=t_1-t_0$ or $\ell_{F_l}(x)=t_1-t_0$ (or both).
	\end{corollary}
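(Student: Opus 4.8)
The plan is to reduce everything to Proposition~\ref{bolas2}, which already encodes the chronological and causal futures in terms of wind balls and c-balls, together with Propositions~\ref{basicprops} and \ref{t}. Recall that two distinct points are horismotically related precisely when one lies in the causal future of the other but not in its chronological future; say $(t_1,x_1)\in J^+(t_0,x_0)\setminus I^+(t_0,x_0)$. First I would note that, by Proposition~\ref{t}, $t$ is a time function, so any causal curve joining the two distinct points forces $t_1>t_0$; hence $r:=t_1-t_0>0$ and the relevant balls are well defined.

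For the characterization itself, I would simply read off Proposition~\ref{bolas2} on the slice $\{t=t_1\}$: membership $(t_1,x_1)\in J^+(t_0,x_0)$ is equivalent to $x_1\in\hat B^+_\Sigma(x_0,r)$, and membership $(t_1,x_1)\in I^+(t_0,x_0)$ is equivalent to $x_1\in B^+_\Sigma(x_0,r)$. Combining the two statements yields at once that $(t_0,x_0)$ and $(t_1,x_1)$ are horismotically related if and only if $x_1\in\hat B^+_\Sigma(x_0,r)\setminus B^+_\Sigma(x_0,r)$, which is the first assertion.

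For the second assertion I would invoke part $(iv)$ of Proposition~\ref{basicprops} to produce a cone geodesic $\rho$ from $(t_0,x_0)$ to $(t_1,x_1)$. Since $\rho$ is causal and $t$ is a time function, $t\circ\rho$ is strictly increasing, so $\rho$ can be reparametrized by its $t$-coordinate as $\rho(s)=(s,x(s))$, $s\in[t_0,t_1]$, with $x(t_0)=x_0$ and $x(t_1)=x_1$; being a cone geodesic is invariant under such an orientation-preserving reparametrization. The velocity $\dot\rho(s)=\partial_t+\dot x(s)$ is future causal, and by the cone wind triple correspondence of Proposition~\ref{conewind} (with $T=\partial_t$, $\ker\Omega=TS$, $\Omega(\partial_t)=1$) together with Definition~\ref{windregions}, a vector $\partial_t+w$ with $w\in TS$ is future causal exactly when $w$ satisfies the wind-curve inequalities $F(w)\leq 1\leq F_l(w)$. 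Hence $x$ is a wind curve of parameter length $r$, and since $x(t_1)=x_1\in\hat B^+_\Sigma(x_0,r)\setminus B^+_\Sigma(x_0,r)$ by the first part, Definition~\ref{extremizing} shows that $x$ is a unit extremizing geodesic of $\Sigma$.

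Finally, integrating the wind-curve inequalities over $[t_0,t_1]$ gives $\ell_F(x)\leq r\leq\ell_{F_l}(x)$; and since $x_1\notin B^+_\Sigma(x_0,r)$, the strict double inequality $\ell_F(x)<r<\ell_{F_l}(x)$ cannot hold for $x$, forcing $\ell_F(x)=r$ or $\ell_{F_l}(x)=r$ (or both). The only genuinely geometric step, and the main obstacle, is the correspondence used in the third paragraph: making precise that a future causal vector tangent to the splitting projects to a wind vector and that the lightlike, locally horismotic character of $\rho$ upstairs matches the unit extremizing condition downstairs. Once Propositions~\ref{conewind}, \ref{bolas2} and \ref{basicprops} are in hand the remainder is bookkeeping, following the line of \cite[Prop. 5.1]{CJS14}.
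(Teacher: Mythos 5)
Your proposal is correct and follows essentially the same route as the paper, which simply defers to \cite[Prop.~5.1]{CJS14} with \cite[Prop.~6.5]{AJ16} in place of the Lorentzian existence result: you read off the set-theoretic characterization from Proposition~\ref{bolas2}, obtain the cone geodesic from part $(iv)$ of Proposition~\ref{basicprops} (itself based on \cite[Prop.~6.5]{AJ16}), and reparametrize by the time function of Proposition~\ref{t} to identify its projection as a unit extremizing geodesic. The only point worth making fully explicit is the $\Sigma$-admissibility of $\dot x$ in the causal-vector/wind-vector correspondence, which you already flag as the one genuinely geometric step.
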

	\begin{proof}
		As in \cite[Prop. 5.1]{CJS14} using again \cite[Prop. 6.5]{AJ16} rather than \cite[Prop. 10.46]{o1983}.
	\end{proof}
	\begin{lemma}\label{stronly}
		Given a \cstk splitting $(\R\times S,\C)$ and $z_0=(t_0,x_0)\in\R\times S$ there exists a convex neighborhood  $U$ of $z_0$,  a neighborhood $V$ of $z_0$ contained in $U$  and some small $\varepsilon>0$  such that
		$J^+(z)\cap \{(t,x)\in \R\times S:  t\in [t(z), t_0+\varepsilon) \}\subset U$ for every $z\in V$. 
	\end{lemma}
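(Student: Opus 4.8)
The plan is to refine the proof of Proposition~\ref{strongcaus} into a quantitative confinement statement: I will control $\C$-causal curves by a finite spatial propagation speed and show that, up to the time cap $t_0+\varepsilon$, none of them can escape a fixed coordinate tube around $z_0$.

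First I would set up a comparison metric. As in the proof of Proposition~\ref{strongcaus}, on a compact product neighborhood $\tilde U=[t_0-a,t_0+a]\times \overline{B_h(x_0,\rho_0)}$ of $z_0$ I take a flat Lorentzian metric $g=\de t^2-h$ with $\partial_t$ static (so $h$ is a $t$-independent Riemannian metric on a neighborhood $W$ of $x_0$ in $S$, and $B_h(x_0,\cdot)$ denotes $h$-metric balls) and with future lightcones strictly larger than those of $\C$. Since the set of future $\C$-causal directions normalized by $\Omega=1$ is compact and strictly contained in the open future $g$-timelike cone at every point of the compact set $\tilde U$, a compactness argument yields a constant $\sqrt{\lambda}<1$ such that every future $\C$-causal vector $v=\alpha\,\partial_t+w$ (with $\alpha=\Omega(v)>0$, $w\in TS$) based at a point of $\tilde U$ satisfies $|w|_h\leq \sqrt{\lambda}\,\alpha$. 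Because $t$ is a time function (Proposition~\ref{t}), any future $\C$-causal curve has $\de t(\dot\gamma)>0$ and may be reparametrized by $t$; the inequality then reads $|\de x/\de t|_h\leq\sqrt{\lambda}$, so $\C$-causal curves travel at $h$-spatial speed at most $\sqrt{\lambda}<1$ as long as they remain in $\tilde U$.

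Next I would fix the neighborhoods. Using Proposition~\ref{convexneigh} (and that convex neighborhoods can be taken arbitrarily small) I choose a convex neighborhood $U\subset\tilde U$ of $z_0$, then a coordinate tube $P:=(t_0-a',t_0+a')\times B_h(x_0,\rho)\subset U$ with $0<a'<a$ and $0<\rho<\rho_0$. Finally I pick $\varepsilon\in(0,a')$ together with $V:=(t_0-\delta',t_0+\delta')\times B_h(x_0,\delta'')\subset P$, choosing $\delta',\delta''>0$ small enough that
$$\delta''+\sqrt{\lambda}\,(\varepsilon+\delta')<\rho.$$

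The heart of the argument, and the step I expect to be the main obstacle, is the confinement (finite propagation speed) estimate. Let $z=(s,y)\in V$ and let $q=(t_q,x_q)\in J^+(z)$ with $t_q<t_0+\varepsilon$; take a future $\C$-causal curve $\gamma(t)=(t,x(t))$, $t\in[s,t_q]$, from $z$ to $q$. I claim $\gamma$ never leaves $\tilde U$: if $\tau^\ast$ were the first parameter at which $\gamma$ meets $\partial\tilde U$, then on $[s,\tau^\ast)$ the speed bound applies and integrates to
$$d_h\big(x(t),x_0\big)\leq d_h(y,x_0)+\sqrt{\lambda}\,(t-s)<\delta''+\sqrt{\lambda}\,(\varepsilon+\delta')<\rho,$$
while $t\in[s,\tau^\ast)\subset(t_0-a,t_0+a)$; letting $t\to\tau^\ast$ shows $\gamma(\tau^\ast)$ lies in the open tube $(t_0-a,t_0+a)\times B_h(x_0,\rho)\subset\tilde U^\circ$, contradicting $\gamma(\tau^\ast)\in\partial\tilde U$. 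Hence the bound holds on all of $[s,t_q]$, so $\gamma([s,t_q])\subset P\subset U$ and in particular $q\in U$. Since $q$ was an arbitrary point of $J^+(z)\cap\{t<t_0+\varepsilon\}$, and $J^+(z)\subset\{t\geq s=t(z)\}$ because $t$ is increasing along causal curves, this gives $J^+(z)\cap\{(t,x): t\in[t(z),t_0+\varepsilon)\}\subset U$ for every $z\in V$; the estimate is uniform in $z\in V$ because $\sqrt{\lambda}$ and $\rho$ do not depend on $z$. The only delicate point is precisely this connectedness argument, which upgrades the a~priori-in-$\tilde U$ speed bound into the conclusion that the curve cannot reach the spatial wall $\rho$ before the cap $t_0+\varepsilon$.
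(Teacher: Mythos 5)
Your argument is correct: the uniform bound $|w|_h\leq\sqrt{\lambda}\,\Omega(v)$ obtained by comparing $\C$ with a static product Lorentzian metric on a compact tube, followed by the integration/first-exit-time confinement, is exactly the finite-propagation-speed mechanism behind \cite[Lemma 5.4]{CJS14}, to which the paper simply defers (and it is the same comparison-metric idea the paper uses for Proposition~\ref{strongcaus}). You supply in full the details the paper leaves to the reference, with no gaps of substance.
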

\begin{proof}
	As the one in \cite[Lemma 5.4]{CJS14}.
	\end{proof}
		\begin{lemma}\label{limitcurve} Let $( M,\C)$ be a cone structure endowed
		with a time function $t: M\rightarrow \R$.
		\begin{enumerate}[(i)]
			\item   Consider a sequence  of inextendible causal curves $\{\gamma_n\}$ parametrized by the time $t$ and assume that there exists a convergent sequence $\{t_n\}$ such that $\gamma_n(t_n)$ converges to $z_0$. 
			Then there exists an (inextendible, causal) limit curve $\gamma$ 
			through $z_0$ parametrized by the time $t$,  and   a  subsequence  $\gamma_{n_k}$  
			such  that,  whenever    the intersection
			of $\gamma$ with the slice $S_{t_0}:=\{z\in  M: t(z)=t_0\}$ is  not empty for   $t_0\in \R$,
			then all the curves $\gamma_{n_k}$ but a finite number intersect $S_{t_0}$ and  $\gamma(t_0)=\lim_k\gamma_{n_k}(t_0)$.
			\item  Let  $\gamma_n$ be a sequence of causal
			curves   and, for each $n\in\N$,  $z_n\leq w_n$ be two points on  $\gamma_n$.  If $z_n\to z$, $w_n\to w$,  $z\neq w$, and the    intersection of the slice
			$S_{t_0}$ with the images of all $\gamma_n$  lies in a
			compact subset for any $t_0 \in   (t(z),t(w)) $,  then any (inextendible) limit
			curve $\gamma$ of the sequence starting at   $z$   arrives at   $w$. 
		\end{enumerate}
	\end{lemma}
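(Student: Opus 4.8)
The plan is to prove this limit curve lemma by reducing it to the corresponding result for causal curves in cone structures (Propositions~\ref{existslimitcurve} and \ref{generallimitcurve}) together with the auxiliary Lorentzian metric construction used there, but carefully tracking the parametrization by the time function $t$. Since $t$ is a time function, it is strictly increasing along every causal curve, so each causal curve admits a reparametrization by $t$; this makes ``parametrization by the time $t$'' well-defined and turns the slices $S_{t_0}=t^{-1}(t_0)$ into the level sets that the curves must cross transversally. For part (i), I would first invoke Proposition~\ref{generallimitcurve} to extract an inextendible causal limit curve $\gamma$ through $z_0$, and then upgrade the raw notion of limit curve (mere accumulation in every neighborhood) to the stronger conclusion that $\gamma(t_0)=\lim_k\gamma_{n_k}(t_0)$ on every slice that $\gamma$ meets. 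The key device is that, because $t$ is strictly increasing and continuous along causal curves, the value $t_0$ uniquely determines a single point on each $\gamma_n$ (when it exists), so the set-theoretic limit curve can be promoted to a genuine pointwise limit along a fixed subsequence.

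\emph{Step 1.} Construct, as in Proposition~\ref{generallimitcurve}, a Lorentzian metric $g$ on $M$ whose timelike cones contain $\C$, so that the $\gamma_n$ are $g$-causal and inextendible. Apply the classical limit curve theorem \cite[Prop.~3.31]{BeEhEa96} to obtain a $g$-causal inextendible limit curve $\gamma$ through $z_0$ and a subsequence realizing it; then use the convex neighborhoods of Proposition~\ref{convexneigh} exactly as in Proposition~\ref{generallimitcurve} to guarantee that $\gamma$ is in fact $\C$-causal.

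\emph{Step 2.} Reparametrize $\gamma$ and all $\gamma_{n_k}$ by $t$, which is legitimate since $t$ is strictly increasing along causal curves by Proposition~\ref{t} (applied on each convex patch). Fix $t_0$ with $\gamma(t_0)\neq\emptyset$. Because $\gamma$ passes through $\gamma(t_0)$, for any neighborhood $W$ of $\gamma(t_0)$ infinitely many $\gamma_{n_k}$ meet $W$; choosing $W$ inside a convex neighborhood $U$ on which $t$ has image containing $t_0$, and using strict monotonicity of $t\circ\gamma_{n_k}$, I extract the unique intersection point $\gamma_{n_k}(t_0)\in S_{t_0}$ for all large $k$. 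A diagonal argument over a countable dense set of slices, combined with the equicontinuity furnished by the convex neighborhoods (so that nearby $t$-values give nearby points), yields $\gamma(t_0)=\lim_k\gamma_{n_k}(t_0)$ on \emph{every} slice met by $\gamma$, not merely on a dense set.

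For part (ii) the structure is similar but now the curves need not be inextendible and the control comes from the compactness hypothesis rather than from global completeness. The plan is: given $z_n\to z$ and $w_n\to w$ on $\gamma_n$ with $t(z)<t(w)$, restrict each $\gamma_n$ to the parameter interval between $z_n$ and $w_n$; for any $t_0\in(t(z),t(w))$ the intersection $\gamma_n\cap S_{t_0}$ lies in a fixed compact set $\mathcal{K}_{t_0}$, which prevents the limit curve from escaping to infinity before reaching $t(w)$. Taking any inextendible limit curve $\gamma$ starting at $z$ (which exists by part (i) applied with extensions, or directly by the limit curve theorem with a boundary point), I would argue that $\gamma$ cannot stop before $S_{t(w)}$: if it did, inextendibility combined with the compact confinement would force an accumulation inside $\mathcal{K}_{t_0}$ for some $t_0<t(w)$, and the pointwise convergence from part (i) would then push $\gamma$ past that slice, a contradiction. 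Finally, pointwise convergence on the slice $S_{t(w)}$ forces $\gamma(t(w))=\lim w_n=w$.

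The main obstacle I anticipate is \emph{Step~2}: promoting the weak (neighborhood-accumulation) notion of limit curve to honest pointwise convergence $\gamma(t_0)=\lim_k\gamma_{n_k}(t_0)$ on every slice, rather than along a dense set of slices or only up to further subsequences. The subtlety is that a priori different slices might require different subsequences; resolving this requires that the convex neighborhoods provide uniform (locally equicontinuous) control on the parametrized curves so that a single diagonal subsequence works simultaneously for all $t_0$, and that the strict monotonicity of $t$ rules out the curves oscillating across a slice. This is precisely where the $\C$-causal structure, the time function of Proposition~\ref{t}, and the convex neighborhoods of Proposition~\ref{convexneigh} must be combined, and it is the heart of the argument; the remaining topological bookkeeping in part (ii) is then routine.
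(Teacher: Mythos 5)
Your proposal is correct and follows essentially the same route as the paper: the paper's proof is simply a citation to \cite[Lemma 5.7]{CJS14} ``taking into account \S\ref{limitcurves}'', i.e.\ it adapts the Lorentzian argument by means of the auxiliary metric and convex neighborhoods of Propositions~\ref{convexneigh} and~\ref{generallimitcurve}, exactly as you do. Your Step~2 (upgrading neighborhood-accumulation to pointwise convergence on every slice via $t$-parametrization, local equicontinuity and a diagonal argument) is precisely the content of the cited lemma, so the approaches coincide.
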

	\begin{proof}
		As in \cite[Lemma 5.7]{CJS14} taking into account \S\ref{limitcurves}.
		\end{proof}
	\begin{proposition}\label{lreflect} For any $p=(t_0,x_0), q=(t_1, x_1)$ in a \cstk splitting:
		\begin{enumerate}[(i)]
			\item $I^+(p)\supset I^+(q)$ if and only if $x_1\in
			\bar{B}^+_{\Sigma}(x_0,t_1-t_0)$, and
			\item  $I^-(p)\subset I^-(q)$ if and only if $x_0\in
			\bar{B}^-_{\Sigma}(x_1,t_1-t_0)$.
		\end{enumerate}
		\smallskip
		\noindent Moreover,
		\begin{align*}
			&\bar J^+(t_0,x_0)= \left(\cup_{s> 0}\{t_0+s\}\times \bar{B}_{\Sigma}^+(x_0,s)\right)\,\cup\,\{(t_0,x_0)\},\\
			&\bar J^-(t_0,x_0)= \left(\cup_{s> 0}\{t_0-s\}\times \bar{B}_{\Sigma}^-(x_0,s)\right)\,\cup\,\{(t_0,x_0)\}.
		\end{align*}
	\end{proposition}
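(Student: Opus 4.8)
The plan is to split the statement into two ingredients: a purely causal characterisation of when one chronological future is contained in another, and the two ``moreover'' formulas for $\bar J^\pm$. Parts (i) and (ii) then follow by substitution, (ii) being the time-dual of (i). First I would record the general fact, valid in any cone structure, that
$$I^+(q)\subseteq I^+(p)\quad\Longleftrightarrow\quad q\in\overline{I^+(p)}.$$
For the implication $(\Leftarrow)$, pick $r\in I^+(q)$; then $q\in I^-(r)$, which is open by Proposition~\ref{basicprops}(i), so $I^-(r)$ meets $I^+(p)$ at some $w$, and $p\ll w\leq r$ gives $p\ll r$ by Proposition~\ref{basicprops}(ii). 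For $(\Rightarrow)$, note that $q\in\overline{I^+(q)}$ (follow a short future timelike curve issuing from $q$, which exists since $\C_q\neq\emptyset$) and $\overline{I^+(q)}\subseteq\overline{I^+(p)}$.

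Combining this equivalence with $\overline{I^+(p)}=\overline{J^+(p)}$ (Proposition~\ref{basicprops}(vi)) and the ``moreover'' formula, and recalling from Proposition~\ref{t} that $t$ is a time function (so $I^+(p)\supseteq I^+(q)$ already forces $t_1\geq t_0$, which matches the separation into the diagonal point and the positive-$s$ part in the formula), one reads off (i); reversing the time orientation yields (ii). Thus everything reduces to establishing
$$\overline{J^+(t_0,x_0)}=\overline{I^+(t_0,x_0)}= \big(\cup_{s>0}\{t_0+s\}\times \bar B^+_\Sigma(x_0,s)\big)\cup\{(t_0,x_0)\},$$
using the description $I^+(t_0,x_0)=\cup_{s>0}\{t_0+s\}\times B^+_\Sigma(x_0,s)$ from Proposition~\ref{bolas2}. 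The inclusion $\supseteq$ is immediate: a point $(t_0+s,y)$ with $s>0$ and $y\in\bar B^+_\Sigma(x_0,s)$ is approximated by $(t_0+s,y_n)$ with $y_n\in B^+_\Sigma(x_0,s)$ and $y_n\to y$, while $(t_0,x_0)$ itself lies in $\overline{I^+(t_0,x_0)}$.

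For $\subseteq$, take $(t_0+s^*,y^*)=\lim_n(t_0+s_n,y_n)$ with $y_n\in B^+_\Sigma(x_0,s_n)$, realised by wind curves $\sigma_n$ from $x_0$ to $y_n$ of parameter-length $s_n$ satisfying $\ell_F(\sigma_n)<s_n<\ell_{F_l}(\sigma_n)$. If $s^*=0$, the compactness of the indicatrices $\Sigma_p$ bounds the velocities $\dot\sigma_n$ on a fixed compact neighbourhood, so the spatial displacement tends to $0$ and $y^*=x_0$. If $s^*>0$, I would apply Lemma~\ref{limitcurve}(ii) to the lifted causal curves $r\mapsto(t_0+r,\sigma_n(r))$, whose endpoints converge to the distinct points $(t_0,x_0)$ and $(t_0+s^*,y^*)$ and whose images lie in a fixed compact region, so that the required uniform compactness of the slice-intersections follows once more from the compactness of the $\Sigma_p$; this produces a limit wind curve $\sigma$ from $x_0$ to $y^*$ of parameter-length $s^*$.

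The main obstacle is the final identification. The limit curve only inherits the \emph{non-strict} bounds $\ell_F(\sigma)\leq s^*\leq\ell_{F_l}(\sigma)$, so a priori it certifies only $y^*\in\hat B^+_\Sigma(x_0,s^*)$, whereas the statement demands $y^*\in\bar B^+_\Sigma(x_0,s^*)$, the closure of the \emph{open} ball of the \emph{fixed} radius $s^*$. Bridging this gap is delicate precisely because $B^+_\Sigma(x_0,\cdot)$ need not be monotone in the radius on the strong-wind region (reparametrisation leaves $\ell_{F_l}$ unchanged, so enlarging the parameter interval can violate $s<\ell_{F_l}$), and because the radii $s_n$ vary with $n$. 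I would resolve it exactly as in the corresponding argument of \cite{CJS14}, via the wind-ball analysis imported in \S\ref{wfm}, which shows that every $c$-ball point is a limit of open-ball points of the same radius, so that $\hat B^+_\Sigma(x_0,s^*)\subseteq\bar B^+_\Sigma(x_0,s^*)$ and the computation of $\overline{J^+}$ closes.
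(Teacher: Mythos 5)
Your overall architecture is sound and, as far as one can tell, matches the proof the paper imports from \cite[Th.~5.8]{CJS14}: the purely causal equivalence $I^+(q)\subseteq I^+(p)\Leftrightarrow q\in\overline{I^+(p)}$, combined with $\overline{I^+(p)}=\overline{J^+(p)}$ from Proposition~\ref{basicprops}(vi) and Proposition~\ref{bolas2}, correctly reduces (i) and (ii) to the two closure formulas; and you rightly isolate the genuinely delicate ingredient, namely $\hat B^+_\Sigma(x_0,s^*)\subseteq\bar B^+_\Sigma(x_0,s^*)$, which is a wind-ball result of \cite{CJS14} that this paper imports wholesale in \S\ref{wfm} and may legitimately be cited.

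There is, however, a genuine gap in your verification of the hypothesis of Lemma~\ref{limitcurve}(ii). You assert that the images of the lifted curves ``lie in a fixed compact region'' and that the compactness of the slice intersections ``follows from the compactness of the $\Sigma_p$''. Compactness of the indicatrices only yields a velocity bound that is uniform on compact subsets of $S$; it does not confine the wind curves $\sigma_n$ to a fixed compact set, since the allowed speeds may grow without control at infinity and a curve of bounded parameter length can leave every compact set and return. The intermediate points $\sigma_n(u)$ lie only in $\hat B^+_\Sigma(x_0,u)\cap\hat B^-_\Sigma(y_n,s_n-u)$, and the precompactness of such intersections is exactly the kind of condition that appears as a \emph{hypothesis} in Theorem~\ref{generalK}(iii), not a consequence of Definition~\ref{windFinsler}. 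So Lemma~\ref{limitcurve}(ii) cannot be invoked as you justify it. Fortunately the limit-curve machinery is unnecessary here: because the wind-curve conditions $F(\dot\sigma_n)\le 1\le F_l(\dot\sigma_n)$ are pointwise, when $s_n\ge s^*$ the restriction $\sigma_n|_{[0,s^*]}$ is already a wind curve certifying $\sigma_n(s^*)\in\hat B^+_\Sigma(x_0,s^*)$, and $\sigma_n(s^*)\to y^*$ by the \emph{local} velocity bound in a fixed compact neighbourhood of $y^*$ (an escape-time argument applied backwards from $y_n$ over the short parameter interval $[s^*,s_n]$); when $s_n<s^*$ one appends a short arc with velocity in the open region $F<1<F_l$, obtaining directly a point of $B^+_\Sigma(x_0,s^*)$ at distance of order $s^*-s_n$ from $y_n$. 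This yields $y^*\in\overline{\hat B^+_\Sigma(x_0,s^*)}$ by purely local manipulations at the endpoints, after which the cited inclusion $\overline{\hat B^+_\Sigma(x_0,s^*)}=\bar B^+_\Sigma(x_0,s^*)$ closes the argument exactly as you indicate.
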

	\begin{proof}
			If follows the same steps as \cite[Th. 5.8]{CJS14} taking into account part $(vi)$ of Proposition \ref{basicprops}.
			\end{proof}
	\begin{theorem}\label{generalK} Consider a \cstk  splitting $(\R\times S,\C)$ with associated wind Finslerian structure $\Sigma$ on $S$. Then, $(\R\times S,\C)$  is stably causal and
		\begin{enumerate}[(i)]
			\item  $(\R\times S,\C)$ is causally continuous if and only if
			$\Sigma$ satisfies the following property: given any pair  of points
			$x_0,x_1$ in $S$ and $r>0$, $x_1\in \bar{B}^+_{\Sigma}(x_0,r)$ if
			and only if $x_0\in \bar{B}^-_{\Sigma}(x_1,r)$.
			
			\item  $(\R\times S,\C)$ is causally simple
			if and only if $(S,\Sigma)$ is  w-convex, namely, the c-balls $\hat{B}^+_{\Sigma}(x,r)$ 
			and  $x_0\in \hat{B}^-_{\Sigma}(x,r)$ are closed for all $x\in S$ and $r>0$.
			\item  The following assertions are equivalent:
			\begin{enumerate}
				\item[(iii1)] $(\R\times S,\C)$ is globally hyperbolic.
				\item[(iii2)]$\hat{ B}^+_{\Sigma}(x,r_1)\cap
				\hat{B}^-_{\Sigma}(y,r_2)$ is compact for every $x,y\in
				S$ and $r_1,r_2>0$.
				\item[(iii3)]  $\bar{ B}^+_{\Sigma}(x,r_1)\cap \bar{B}^-_{\Sigma}(y,r_2)$
				is compact for every $x,y\in S$ and
				$r_1,r_2>0$.
			\end{enumerate}
			\item The following assertions are
			equivalent:
			\begin{enumerate}
				\item[(iv1)] A slice $S_t$ (and, then every slice) is a
				spacelike Cauchy hypersurface.
				\item[(iv2)] All the c-balls $\hat B_{\Sigma}^+(x,r)$ and
				$\hat B_{\Sigma}^-(x,r)$, $r>0$, $x\in S$, are compact.
				\item[(iv3)] All the (open) balls $B_{\Sigma}^+(x,r)$ and
				$B_{\Sigma}^-(x,r)$, $r>0$, $x\in S$, are precompact.
				\item[(iv4)]   $\Sigma$ is forward and backward geodesically complete.
			\end{enumerate}
		\end{enumerate}
	\end{theorem}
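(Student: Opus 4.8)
The plan is to follow the scheme of \cite[Th. 5.9]{CJS14}, converting each rung of the causal ladder of $(\R\times S,\C)$ into a metric-type statement about the wind Finslerian structure $\Sigma$ by means of the explicit descriptions of the causal sets obtained in Propositions \ref{bolas2} and \ref{lreflect}. First, stable causality is immediate: by Proposition \ref{t} the coordinate $t$ is a time function, which is exactly the content of stable causality in Definition \ref{causalprop}\,(vi). Moreover, by Proposition \ref{strongcaus} the splitting is strongly causal, hence distinguishing; therefore in parts (i) and (ii) the distinguishing hypothesis is automatic, and only reflectivity and closedness of the causal sets, respectively, remain to be characterized.

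For (i), Proposition \ref{causalcontrefl} reduces causal continuity to future and past reflectivity. By Proposition \ref{lreflect}\,(i)--(ii), the inclusion $I^+(p)\supset I^+(q)$ is equivalent to $x_1\in\bar B^+_\Sigma(x_0,t_1-t_0)$ and $I^-(p)\subset I^-(q)$ to $x_0\in\bar B^-_\Sigma(x_1,t_1-t_0)$; feeding these into the reflectivity implications, future reflectivity yields $x_1\in\bar B^+_\Sigma(x_0,t_1-t_0)\Rightarrow x_0\in\bar B^-_\Sigma(x_1,t_1-t_0)$ and past reflectivity the converse, which together give the stated symmetry between forward and backward closed wind balls. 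For (ii), causal simplicity equals distinguishing plus closedness of $J^\pm(p)$, hence reduces to the latter. Using the slicewise description $J^+(t_0,x_0)=\cup_{s\geq 0}\{t_0+s\}\times\hat B^+_\Sigma(x_0,s)$ of Proposition \ref{bolas2}, I would show that $J^+$ is closed if and only if every c-ball $\hat B^+_\Sigma(x,r)$ is closed, the nontrivial direction being handled by extracting a convergent wind curve through the limit-curve Lemma \ref{limitcurve}; the backward case is symmetric, and the two together are exactly w-convexity.

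For (iii), strong causality (Proposition \ref{strongcaus}) reduces global hyperbolicity to compactness of the sets $J^+(p)\cap J^-(q)$. By Proposition \ref{bolas2} this intersection is the union over the slices of $\hat B^+_\Sigma(x_0,s-t_0)\cap\hat B^-_\Sigma(x_1,t_1-s)$, giving the equivalence (iii1)$\Leftrightarrow$(iii2); the passage to the closed balls in (iii3) uses the $\bar J^\pm$ formulas of Proposition \ref{lreflect} together with Lemma \ref{limitcurve}\,(ii), since closed balls lie inside c-balls of the same radius and the discrepancy is controlled, up to compactness, by limit curves whose slice intersections are confined to compact sets by Lemma \ref{stronly}. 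For (iv), a slice is a Cauchy hypersurface exactly when every inextendible cone geodesic crosses it (Proposition \ref{inextengeo}); describing cone geodesics as unit extremizing geodesics of $\Sigma$ via Corollary \ref{horismos} and invoking Lemma \ref{limitcurve} translates this into compactness of all c-balls, giving (iv1)$\Leftrightarrow$(iv2), while (iv2)$\Leftrightarrow$(iv3)$\Leftrightarrow$(iv4) is the wind Finslerian Hopf--Rinow theorem of \cite{CJS14}.

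The main obstacle is the cluster of equivalences in (iii) and (iv), where the abstract causal conditions must be turned into compactness of wind balls and into geodesic completeness of $\Sigma$. Unlike the single-metric situation of Theorem \ref{kropinaLadder}, one must here control simultaneously the two pseudo-Finsler metrics $F$ and $F_l$ on the strong region $M_l$, together with the degeneration at critical-wind points where $0\in\Sigma_p$ and $F_l$ is undefined; the compactness arguments thus hinge delicately on the limit-curve machinery of Lemma \ref{limitcurve} being valid for \emph{wind} curves (not merely $F$-admissible ones) and on the correspondence of Corollary \ref{horismos}. Since Propositions \ref{bolas2} and \ref{lreflect}, Corollary \ref{horismos} and Lemma \ref{limitcurve} already package exactly these wind-specific phenomena, the remaining steps run parallel to \cite[Th. 5.9]{CJS14}.
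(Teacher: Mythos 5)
Your proposal is correct and follows essentially the same route as the paper, which simply invokes the scheme of \cite[Th.\ 5.9]{CJS14} together with Propositions \ref{causalcontrefl}, \ref{inextengeo}, \ref{t} and \ref{strongcaus}; your additional use of Propositions \ref{bolas2} and \ref{lreflect}, Corollary \ref{horismos} and Lemma \ref{limitcurve} is exactly the supporting material the paper prepares for this purpose. The extra detail you supply is a faithful unpacking of that same argument rather than a different approach.
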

\begin{proof}
	If follows the same steps as \cite[Th. 5.9]{CJS14} taking into account Propositions \ref{causalcontrefl},  \ref{inextengeo}, \ref{t} and \ref{strongcaus}.
\end{proof}
Observe that in \cite[Th. 3.23]{JS_proc}, some of the characterization in the above theorem are rewritten in terms of  (an extension) of the separation $d_F$ introduced in \S \ref{FinslerKropina}.
\section*{Acknowledgments}
We thank M. S\'anchez for several fruitful discussions and for having suggested us the name ``cone Killing vector field''  for a vector fields preserving a cone structure according to Definition~\ref{killing}.

\smallskip

\noindent EC is partially supported by European Union - Next Generation EU - PRIN 2022 PNRR {\it ``P2022YFAJH Linear and Nonlinear PDE's: New directions and Application''},  by  MUR under the Programme ``Department of Excellence'' Legge 232/2016 
(Grant No. CUP - D93C23000100001) and by GNAMPA INdAM - Italian National Institute of High Mathematics.

\smallskip

\noindent MAJ was partially supported by the project PID2021-124157NB-I00, funded by MCIN/AEI/10.13039/501100011033/``ERDF A way of making Europe", and also by Ayudas a proyectos para el desarrollo de investigaci\'{o}n cient\'{i}fica y t\'{e}cnica por grupos competitivos (Comunidad Aut\'{o}noma de la Regi\'{o}n de Murcia), included in the Programa Regional de Fomento de la Investigaci\'{o}n Cient\'{i}fica y T\'{e}cnica (Plan de Actuaci\'{o}n 2022) of the Fundaci\'{o}n S\'{e}neca-Agencia de Ciencia y Tecnolog\'{i}a de la Regi\'{o}n de Murcia, REF. 21899/PI/22.

\end{document}